\newtheorem{theorem}{Theorem}[section]
\newtheorem{fact}[theorem]{Fact}
\newtheorem{proposition}[theorem]{Proposition}
\newtheorem{prop}[theorem]{Proposition}
\newtheorem{corollary}[theorem]{Corollary}
\theoremstyle{definition}
\newtheorem{definition}[theorem]{Definition}
\newtheorem{claim}[theorem]{Claim}
\newtheorem{remark}[theorem]{Remark}
\newtheorem{lemma}[theorem]{Lemma}
\newcommand{\NN}{{\mathbb{N}}}
\newcommand{\sub}{\subseteq}
\newcommand{\sN}[1]{_{#1\in \NN}}
\newcommand{\SI}[1]{\Sigma^0_{#1}}
\newcommand{\bi}{\begin{itemize}}
\newcommand{\ei}{\end{itemize}}
\newcommand{\bc}{\begin{center}}
\newcommand{\ec}{\end{center}}
\newcommand{\tp}[1]{2^{#1}}
\newcommand{\strbaire}{\NN^{ < \NN}}
\newcommand{\n}{\noindent}
\newcommand{\sss}{\sigma}
\newcommand{\aaa}{\alpha}
\newcommand \seq[1]{{\left\langle{#1}\right\rangle}}
\newcommand\+[1]{\mathcal{#1}}
\newcommand{\LR}{\Leftrightarrow}
\newcommand{\RA}{\Rightarrow}
\newcommand{\LA}{\Leftarrow}
\newcommand{\sssl}{\ensuremath{|\sigma|}}
\renewcommand{\epsilon}{\varepsilon}
\begin{document}

\title{Computable  topological abelian  groups}

  \author{Martino Lupini, Alexander Melnikov,  and  Andre Nies}

\maketitle

\begin{abstract}    
We  study the algorithmic content of Pontryagin - van Kampen duality.  
We prove that the dualization is computable in the  important cases of compact and locally compact totally disconnected  Polish abelian groups.
The applications of our main results include solutions to questions of Kihara and Ng about presentations of connected Polish spaces, and an unexpected arithmetical characterisation of direct products of solenoid groups among all Polish groups.

 \end{abstract}
 \setcounter{tocdepth} 1
\tableofcontents
\section{Introduction}

\subsection{Overview} We  study Polish groups combining ideas from  computability theory with tools of abstract harmonic analysis and algebraic topology.

 The celebrated Pontryagin - van Kampen duality   essentially reduces the study of compact abelian groups to the theory of discrete abelian groups. For instance, for connected compact abelian $G$ and  $H$, we have that $G$ is topologically isomorphic to $H$ iff their discrete duals $\widehat{G}$ and $\widehat{H}$ are isomorphic. So the expectation might be that \emph{deciding} the topological isomorphism problem for compact abelian groups 
can be reduced to solving this problem for the dual groups, which are discrete (where much is known~\cite{melbsl,Khi}). Unfortunately, 
one fundamental issue with Pontryagin - van Kampen duality in the literature is that calculating duals usually involves non-constructive considerations, all proofs tend to be non-algorithmic, and even the definition of the dual of a group (\ref{ss2}) seems completely algorithmically non-effective.

We  use tools of effective algebra~\cite{AshKn,ErGon} and computable analysis~\cite{PourElRich,Wei00} to resolve this issue. 
More specifically,  using a wide variety of techniques we  prove that \emph{Pontryagin - van Kampen duality is computable in the case of connected compact Polish groups}.
Our second main result establishes \emph{a computable version of the duality for totally disconnected locally compact abelian groups}.
The methods we use are new and promise a lot more.
 For instance, we give the first example of an effective version of $\rm\check{C}$ech cohomology from algebraic topology, and extend a result of Dobritsa~\cite{feebleDobrica} from effective algebra to the setting of procountable groups. 

We  apply these results and techniques to 
  give unexpectedly low (arithmetical) estimates for the isomorphism problem for natural subclasses of compact connected abelian groups, perhaps most notably for  direct products of solenoid groups.
 Interestingly, with some extra work our techniques can be used to solve several  open problems seemingly unrelated to Polish groups.
For instance, we give the first known example of a $\Delta^0_2$-metrized connected Polish space not homeomorphic to any computably metrized Polish space;  note that we do not restrict ourselves to Polish groups here. This answers a question recently raised by Kihara  and also independently posed in~\cite{uptohom}. See \S\ref{ss:c2} for further open questions and their solutions.

 To formally state and  discuss our results we need some background. The rest of the introduction will proceed as follows. In Subsection \ref{int:general} below we give a general introduction   to computable mathematics to motivate our investigations. Then in Subsection~\ref{ss2} we briefly discuss Pontryagin - van Kampen duality and what is known about its algorithmic content. Subsections~\ref{ss3} and \ref{ss4} contain the main results about compact and totally disconnected locally compact groups, respectively. In Subsection~\ref{ss5} we state and discuss the above-mentioned corollaries of our results. Finally, in Subsection~\ref{s6} we briefly discuss the duals of t.d.l.c.~groups.

\subsection{Computable mathematics}\label{int:general}
Our  paper contributes to a fast developing branch of effective mathematics which combines methods of computable algebra~\cite{ErGon,AshKn, Ershov} with tools of computable analysis~\cite{Brattka.Hertling.ea:08,Wei00,PourElRich}
to advance both subjects.  The main tools of such studies are the notions of computability of algebraic and topological structures;  Turing~\cite{Turing:36,Turing:37},  Fr\"ohlich and  Shepherdson~\cite{FS56},  Maltsev~\cite{Ma61},  Rabin~\cite{abR} and others suggested various notions of computability  for infinite mathematical structures and spaces.

The standard approach to computability in effective algebra is as follows. A computable presentation of a (discrete, countably infinite) algebraic structure is its isomorphic copy upon the domain $\mathbb{N}$ in which the operations and relations are Turing computable~\cite{FS56,Ma61,abR}. In topology and Banach space theory the situation is more complex since structures are almost never countable. However, at least in the separable case one can use a dense countable set to define computability, as follows. Following the early ideas of Turing~\cite{Turing:36,Turing:37}, say that a Polish space is computably metrized (computable Polish) if there is a countable dense subset $(x_i)_{i \in \NN}$ and a complete metric $d$ compatible with the topology such that $d(x_i, x_j)$ can be uniformly computed with precision $2^{-n}.$ For Banach spaces~\cite{PourElRich} one usually fixes the metric associated with some complete norm and also additionally requires the operations to be computable (to be clarified). In the context of this paper, we follow~\cite{MeMo,Pontr} and  allow the metric to be compatible but not fixed (that is, different presentations can have different metrics).

 Initially, computably presented structures and spaces were mainly used to formally illustrate that certain proofs and procedures in the classical literature can or cannot be performed algorithmically; see~\cite{AshKn,ErGon,PourElRich}. 
For example, it is well-known  that the algebraic closure $\overline{F}$ of a computable field  $F$ is computable via a computable embedding $\phi:F \rightarrow \overline{F}$, but $\phi(F)$ does not have to be a computable (decidable) subset of $\overline{F}$~\cite{FS56}.

  Beginning with \cite{GonKni}, these methods have found  applications in the study of classification problems not necessarily restricted to computable mathematics. We discuss some of these below; see~\cite{DoMel1,DoMo,autopaper,enu} for further discussion.    The intuition is that for many common non-trivial  classes of objects, the problem of characterising computably presentable members of the class tends to be as hard as  describing arbitrary (not necessarily computable) members of the class by invariants of some kind. This intuition is formalized using definability techniques, hierarchies, and computability-theoretic relativisation. 
%The study of the algorithmic content of mathematics and the computability theoretic approach to classification problems 
%  are technically interconnected and no firm line can be drawn between them. In particular,  both themes rely on
% a suitable notion of  computable presentability of a mathematical algebraic structure,  space, or  function. The key problems in both themes are often reduced to answering   fundamental %questions, such as: Which members of a given class admit a computable presentation? Which computable members of the class are computably isomorphic? 
% Can one  describe computable processes in a structure  internally using definability techniques?  What is the optimal computability-theoretic or definability complexity of invariants describing the structures? 
 Results of this sort are somewhat akin to those in descriptive set theory~\cite{GaoBook}, but these two approaches sometimes provide slightly different complexity estimates for the same class; e.g., compare \cite{Ulmhomo}  with the corresponding results in \cite{GaoBook,Hj}. In some cases the computability-theoretic versions of results are more ``constructive'' and fine-grained. In particular,  computable results  typically can be relativized to an arbitrary oracle, and thus they imply the respective ``boldface''  topological estimates.
  
   For example, Downey and Montalb{\'a}n \cite{DoMo} proved that the isomorphism problem for computable torsion-free abelian groups is $\Sigma^1_1$-complete. This means that any problem which involves an exhaustive search through the uncountably many members of the Baire space $\NN^{\NN}$ can be computably transformed into the problem of deciding whether two computable torsion-free abelian groups are isomorphic. The result of Downey and Montalb{\'a}n can be uniformly relativised to any oracle $X$, which means that the problem of deciding whether two $X$-computable torsion-free abelian groups are isomorphic is a $\Sigma^1_1(X)$-complete problem.
In particular, it follows that the isomorphism problem for (the set of reals coding) such groups is analytic complete under continuous reducibility, i.e., it is $\bf{\Sigma^1_1}$-complete in the boldface hierarchy. This provides strong evidence that countable torsion-free abelian groups are unclassifiable up to isomorphism; see \cite{GonKni, DoMo} for a detailed discussion. %Indeed,  in general deciding whether two such groups are isomorphic is as hard as just searching for an isomorphism between them. Thus, one should not expect these groups to have any useful invariants, because such invariants would make the decision procedure simpler; see \cite{GonKni, DoMo} for a detailed discussion.

Our goals include the study from the perspective outlined above of the algorithmic content of Pontryagin - van Kampen duality, and its application to classification problems. 

 \subsection{Pontryagin - van Kampen duality}\label{ss2}
 Given a topological abelian group $G$,   the character group $\widehat G$ of $G $ is  the collection of all continuous homomorphisms from $G$ to the unit circle group $\mathbb{R}/\mathbb{Z}$ under the compact-open topology (the topology of uniform convergence on compact sets), with pointwise addition. Note that $\widehat G$ is abelian as well. 
Unless otherwise stated, \emph{we will assume that all our groups are Polish and abelian}.
  Pontryagin - van Kampen duality
states that, if  $G$ is locally compact, then $\widehat G$ is also locally compact;  furthermore $\widehat{\widehat{G}}$ is topologically isomorphic to $G$ via the map sending $g\in G$ to the evaluation map $\phi \mapsto \phi(g)$.
It follows that, similarly to  Stone duality in the case of Boolean algebras, the character group $\widehat G$ contains all the information about $G$. For a locally compact abelian group $G$, the character group $\widehat G$ is usually called the Pontryagin - van Kampen dual of $G$, or simply the dual of $G$ if there is no danger of confusion.
  In Section~\ref{sec:pontrvk} we provide the details about    Pontryagin - van Kampen duality  necessary for     our proofs. We refer the reader to  the books~\cite{PontBook,Morris} for more on this subject.
For now, we note that $G$ is discrete countable iff $\widehat G$ is compact Polish. In that case, $G$ is torsion-free iff $\hat G$ is connected, and  $G$  is  torsion     iff $\widehat G$ is totally disconnected.

Note that the definition of $\widehat G$ seems to be non-algorithmic. Nonetheless,
there is one (and, perhaps, only one) instance of Pontryagin - van Kampen duality that is somewhat evidently computable, namely
% In 1981, Smith \cite{Smith1} conjectured  that Pontryagin - van Kampen duality  should be computable in the special cases of 
 the case of
 profinite and discrete torsion abelian groups.  Smith \cite{Smith1} was perhaps the first to note this. However, for almost  40 years after the publication of \cite{Smith1} essentially no progress had been done  towards understanding the algorithmic content of duality
 beyond this special case.
%In other words, he conjectured that in this cases the problem of computable presentability of $G$ is equivalent to the problem of computable presentability of $\widehat{G}$.
Recently, the second author 
has initiated a systematic investigation of the computability-theoretic aspects of Pontryagin - van Kampen duality, with applications to classification problems in topological group theory~\cite{Pontr}.
The paper~\cite{Pontr}   focusses on the classes of compact and discrete abelian groups.
Among other results, it formally clarifies the conjecture of Smith. The main tool in \cite{Pontr} is \emph{the computable version of Pontryagin - van Kampen duality} when passing from discrete to compact groups. More specifically, for a computable discrete $G$, its dual can be computably metrized.
As a consequence of the aforementioned result of Downey and Montalb{\'a}n~\cite{DoMo},  the isomorphism problem for compact connected abelian groups (represented as completions) is $\Sigma^1_1$-complete.
However, neither the result nor the techniques developed in~\cite{Pontr} help to ``construct'' the dual $\widehat{G}$ of a given computably metrized compact $G$.
Our first goal is to completely settle the compact connected case.

%  in the special case of discrete abelian groups. Combined with the aforementioned main result of \cite{DoMo}, the computable version of duality  when passing from discrete to compact groups implies, for instance, that the isomorphism problem for compact connected abelian groups is $\Sigma^1_1$-complete.
%Informally, this means that such groups are ``unclassifiable'' in the sense that, in general, there is no algorithmically simpler way of checking whether two connected compact abelian groups are isomorphic than to search for a topological isomorphism between them.

\subsection{Connected compact groups: the first main result} \label{ss3} 
Recall that the definition of the character group seems rather non-constructive in the connected case, in the following sense.  Since the dual of a compact group $G$ is discrete,
it can be viewed as a collection of isolated paths through the  Baire space. One can use $\Sigma^1_1$ bounding (see e.g.\ \cite{SacksHigherBook}) to see that $\widehat{G}$ has a $\Delta^1_1$ presentation. So the complexity of $\widehat{G}$ could potentially belong to an arbitrary high level of the hyperarithmetical hierarchy. Can we obtain a better complexity estimate and at least establish an upper bound (such as, e.g., $\Delta^0_{\NN^2}$) for the hyperarithmetical level? This question was raised in the conclusion of \cite{Pontr}; see also Problem 21(3) of \cite{MDsurvey}. 

The  estimate that  we give below was unexpected since our initial conjecture was that the complexity of $\widehat{G}$ has to be non-arithmetical.
Recall that a compact computably metrized Polish space is \emph{effectively compact} if the set of all $2^{-n}$-covers by basic open balls of the space is computably enumerable uniformly in $n$. The theorem is the desired computability-theoretic version of Pontryagin -- van Kampen duality in the compact connected/discrete torsion-free case.

\begin{theorem}\label{pont:con}
For a torsion-free  abelian group $G$, the following are equivalent:

\begin{enumerate}
\item $G$ is computably presentable as a discrete group;

\item $\widehat{G}$ admits an effectively compact presentation.
\end{enumerate}

\end{theorem}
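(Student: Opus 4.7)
The forward direction is direct. Fix a computable enumeration $(g_i)_{i\in\NN}$ of the computable discrete group $G$. The evaluation map $\chi \mapsto (\chi(g_i))_{i\in\NN}$ embeds $\widehat G$ into $(\RR/\ZZ)^{\NN}$ equipped with a weighted product metric. The target $(\RR/\ZZ)^{\NN}$ is effectively compact, and the image of $\widehat G$ is the $\Pi^0_1$ subset defined by the effectively closed homomorphism relations $a_i + a_j = a_k$ whenever $g_i + g_j = g_k$, uniformly computable in $i,j,k$. Since an effectively closed subset of an effectively compact space is effectively compact, and since the compact subsets of discrete $G$ are finite (so that the compact--open topology on $\widehat G$ agrees with the product topology restricted to the image), this yields the desired effectively compact presentation.

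For the converse, set $K := \widehat G$. Since $G$ is torsion-free, $K$ is compact and connected, and comes equipped with an effectively compact Polish presentation; we must recover a computable discrete presentation of $G = \widehat K$. My plan is to use the finite $2^{-n}$-covers afforded by effective compactness to build, uniformly in $n$, a nested sequence of closed normal subgroups $N_0 \supseteq N_1 \supseteq \cdots$ of $K$ with $\bigcap_n N_n = \{0\}$ and each quotient $K/N_n$ a finite-dimensional torus. Dualizing such an inverse system yields a direct system of finitely generated free abelian groups $\widehat{K/N_n} \cong \ZZ^{d_n}$ with computable connecting maps $\ZZ^{d_n} \to \ZZ^{d_{n+1}}$, and then $G \cong \varinjlim_n \widehat{K/N_n}$ is a computable discrete abelian group, as required.

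The main obstacle is producing the toral inverse system $(K/N_n)$ effectively from only the metric/compactness data. Classically this rests on Peter--Weyl, whose proof is highly non-constructive: there is no immediate way to extract a computably enumerable list of continuous characters of $K$ directly from a dense sequence and its distances. Overcoming this seems to require either setting up effective Haar integration on $K$ together with an effective spectral decomposition of the regular representation, or pursuing the effective \v{C}ech cohomology route foreshadowed in the introduction, identifying $G \cong \check H^1(K,\ZZ)$ and reading $G$ off from the nerves of a cofinal sequence of finite open covers enumerated out of the effective compactness data. Either strategy must use the uniform $2^{-n}$-covers in an essential way, since it is precisely this quantitative information that distinguishes effective compactness from mere computable metrizability and that should let one certify continuity and the homomorphism property of candidate characters simultaneously.
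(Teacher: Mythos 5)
Your forward direction has a genuine gap. The observation that the homomorphism relations cut out a $\Pi^0_1$ (effectively closed) subset of the effectively compact space $\mathbb{T}^{\NN}$ only gives you the ability to enumerate covers of that subset by basic balls \emph{of the ambient space}; it does not produce an effectively compact \emph{presentation} in the sense of Definition~\ref{def:comp}, which presupposes a computably metrized space, i.e.\ a computable dense sequence of special points lying \emph{inside} $\widehat G$ with computable distances. A co-c.e.\ compact set need not contain even a single computable point, and in fact Proposition 3.3 of \cite{Pontr} shows precisely that $\mathrm{Hom}(G,\mathbb{T})\leq \mathbb{A}$ need not have a uniformly computable dense sequence for a computable discrete $G$. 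The paper circumvents this by first invoking Dobritsa's Theorem~\ref{thm:Dobritsathm} to pass to a computable copy of $G$ with a computable maximal linearly independent set, which makes the defining relations of the solenoid-type subgroup decidable and yields a computable dense subset; only then does it prove effective compactness of that presentation, by a nontrivial argument reducing the enumeration of covers to covering finite products of circles over the unconstrained coordinates. Both of these steps are missing from your argument, and the slogan ``effectively closed inside effectively compact is effectively compact'' cannot replace them.

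The converse as you present it is a plan rather than a proof: the toral-quotient/Peter--Weyl route is not carried out (and you acknowledge the non-constructivity obstacle), while the \v{C}ech cohomology route is only named. That second route is indeed the paper's, but it must overcome two concrete obstacles your sketch does not address: (i) \v{C}ech nerves are not computable from an effectively compact presentation, since intersection of basic balls is only $\Sigma^0_1$; the paper replaces them by \emph{metric nerves} (Subsection~\ref{subs:metr}), whose simplices are finite sets of special points of small diameter with parameters $\varepsilon_n,\delta_n$ chosen to dodge equalities of distances, and proves this tower is naturally isomorphic to the \v{C}ech tower; and (ii) even so, the resulting direct limit $H^1$ is only c.e.-presented, so one still needs Khisamiev's theorem \cite{Hisa2} that every c.e.-presented torsion-free abelian group has a computable presentation (torsion-freeness of $G$ is used exactly here). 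Note also that the paper's $(2)\rightarrow(1)$ deliberately avoids using the group operation on $\widehat G$, which is what powers the applications in Corollary~\ref{corcor}; your proposed use of closed subgroups and toral quotients would forfeit that feature. As it stands, neither implication is established by the proposal.
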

It is well-known that, in general, $0'$ can list all covers of a given compact computably metrized Polish space (see, e.g., \cite{uptohom,topsel} for the technical details).
It follows that, without the assumption of effective compactness, the dual of a computably metrized connected $G$ is $\Delta^0_2$; this is a significant improvement over the crude $\Delta^1_1$ bound. Soon we will see that the bound $\Delta^0_2$ is sharp  and that the assumption of effective compactness cannot be dropped in Theorem~\ref{pont:con}; this is Theorem~\ref{theo:nonpont}.

The proof of Theorem~\ref{pont:con} is rather indirect. In (2), we naturally assume that the group operations are computable; however,  this assumption is actually not necessary for the proof of $(2) \rightarrow (1)$ to work.   It relies on a new constructive definition of $\rm\check{C}$ech cohomology and a  result from algebraic topology asserting that the first $\rm\check{C}$ech cohomology group of the underlying space of a connected abelian Polish group is isomorphic to its dual. The proof also makes use of a theorem of Khisamiev~\cite{Hisa2} stating that every torsion-free abelian group that admits a computably numerable presentation also admits a computable presentation.

We also note that $(1) \rightarrow (2)$ strengthens the main result of \cite{Pontr}, and that $(2)\rightarrow(1)$ is uniform in the case when the group is non-zero; this will be important in applications.
We also conjecture that $(2)\rightarrow(1)$  is non-uniform in general.

 \subsection{Beyond compact groups: the t.d.l.c.~case.}\label{ss4}
Can we extend Theorem~\ref{pont:con} to cover locally compact groups which are neither compact nor discrete? To answer this question, we will need new ideas and techniques.
The class of totally disconnected locally compact (t.d.l.c.) groups is perhaps the narrowest class extensively studied in the literature (recent papers include   \cite{tdlc1,tdlc2,tdlc3,tdlc4,tdlc5}) which contains both the countable discrete and the profinite  Polish groups.  
%While there are several potential notions of computability for this class,   unfortunately,     these notions are    pairwise non-equivalent~\cite{sinf}.  
%Indeed, one of the main challenges of the   emerging theory of computable topological groups initiated in \cite{MeMo} is the   lack of a   compelling notion of computability for arbitrary Polish groups. 
%Nonetheless, such notions exist for countable discrete groups~\cite{Mal,abR} and profinite (Polish) groups~\cite{SmithThesis,Smith1,LaRo1}. 
%The former notion has been central to computable structure theory for decades especially in the abelian case; see the book~\cite{AshKn} and surveys~\cite{melbsl, Khi}. The latter notion has also proved to be useful. For instance, working under the supervision of Nerode, LaRoche~\cite{LaRo1} used computable profinite groups  
%to answer a purely algebraic open question about field extensions. 
%Luckily, the situation is much better in the class of totally disconnected locally compact (t.d.l.c.) Polish groups.
%Thus, finding or designing the ``right'' notion of computability is the first and foremost step in such investigations. 
%In the companion paper \cite{tdlc-groupoid} we suggest three different reasonable definitions of computability for a t.d.l.c.~group and  we show these three notions are equivalent. 

In Definition~\ref{def:procountable}(2) we will  introduce a   notion of computability for abelian t.d.l.c.~groups.
%
% that subsumes the standard definition of computability for the discrete~\cite{Mal,abR} and the profinite~\cite{LaRo1, Smith1} cases. 
%
%%Also, if a t.d.l.c.\ group has  a computable t.d.l.c.\ presentation then it has   a particularly nice one via a closed subgroup of $S_\infty$. 
% These facts, provided in Section~\ref{s:comp tdlc},  give   evidence that our notion is the appropriate one for computability of t.d.l.c.~groups. 
% 
%Our first task is to   improve our results to cover arbitrary t.d.l.c.~abelian groups.
 The problem is that the dual of a t.d.l.c.~group is not  totally disconnected~in general. 
The duals of t.d.l.c.~abelian groups are exactly the extensions (in the sense of super-groups here and throughout) of compact abelian groups by discrete torsion abelian groups; see, e.g.,~\cite{ellipref}.
Such groups are sometimes called locally elliptic.
The most  commonly accepted  notion of computability for general Polish groups is computable metrizability, i.e., there exists a computable complete metric with respect to which the operations become computable.
Thus, the second main result of this article stated below is perhaps the best result one could hope for in the t.d.l.c.~case.

\begin{theorem}\label{theo:pont}
Suppose $G$ is a computable abelian t.d.l.c.~group. Then its dual $\widehat{G}$ is computably metrized Polish.
\end{theorem}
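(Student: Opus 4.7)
The plan is to reduce the claim to two computable dualities that are either already in this paper or in prior work: (i) the main theorem of~\cite{Pontr}, which sends a computably presented discrete abelian group to a computably metrized compact abelian dual (this is, in the torsion-free case, the direction $(1)\Rightarrow(2)$ of Theorem~\ref{pont:con}), and (ii) the computable version of Smith's duality~\cite{Smith1,Pontr} between profinite and discrete torsion abelian groups. I would first apply an effective van~Dantzig theorem to the computable t.d.l.c.~group $G$ to extract a profinite compact open subgroup $K\le G$, together with a computable transversal for $G/K$; this produces a short exact sequence $0 \to K \to G \to D \to 0$ of topological abelian groups in which $K$ is a computably presented profinite group and $D = G/K$ is a computably presented countable discrete abelian group. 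Dualizing and using the exactness of Pontryagin--van~Kampen duality on locally compact abelian groups then yields
\[0 \to \widehat D \to \widehat G \to \widehat K \to 0,\]
where $\widehat D$ is identified with the annihilator $K^{\perp}$ of $K$ inside $\widehat G$; thus $K^{\perp}$ is a compact open subgroup of $\widehat G$ with discrete torsion quotient $\widehat K$, and $\widehat G$ decomposes as a countable disjoint union of clopen cosets of $K^{\perp}$, each topologically isomorphic to the compact group $\widehat D$.

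The two computable dualities then provide the building blocks. From the computable profinite $K$, (ii) gives a computable presentation of the discrete torsion group $\widehat K$. From the computable discrete $D$, (i) gives a computably metrized compact Polish presentation of $\widehat D$, together with a distinguished computable dense subset $Q \subseteq \widehat D$ and computable group operations. To splice these into $\widehat G$, I would use the metric
\[d(\chi,\chi') = \begin{cases}\min\bigl(1,\,d_{\widehat D}(\chi - \chi',\,0)\bigr) & \text{if } \chi|_K = \chi'|_K,\\ 1 & \text{otherwise,}\end{cases}\]
which is compatible with the compact-open topology on $\widehat G$: since $K$ is open, every compact subset of $G$ lies in finitely many cosets of $K$, so convergence in the compact-open topology amounts to eventual equality on the profinite $K$ (where each character has finite image, so uniform closeness forces equality) together with pointwise convergence on a transversal, which is exactly convergence of the difference in $\widehat D$. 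A computable dense subset of $\widehat G$ is then $\{\widetilde\xi + \eta : \xi\in \widehat K,\,\eta\in Q\}$, where $\widetilde\xi \in \widehat G$ is a chosen lift of $\xi$, i.e., a continuous character of $G$ whose restriction to $K$ is $\xi$. Such lifts exist because $\mathbb{R}/\mathbb{Z}$ is divisible, hence injective as an abelian group, so any homomorphism $K \to \mathbb{R}/\mathbb{Z}$ extends algebraically to $G \to \mathbb{R}/\mathbb{Z}$; openness of $K$ makes the extension automatically continuous.

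The main obstacle is making the map $\xi \mapsto \widetilde\xi$ effective. This is a computable incarnation of the injectivity of $\mathbb{R}/\mathbb{Z}$: using a computable generator-and-relation presentation of $D$ (obtainable from the computability of $G$ and $K$), one must, uniformly in $\xi \in \widehat K$, choose values $\widetilde\xi(g)\in\mathbb{R}/\mathbb{Z}$ on coset representatives that satisfy $\widetilde\xi(g)^n = \xi(ng)$ whenever $ng \in K$, and do so consistently across all defining relations of $D$. This reduces to computing $n$-th roots in $\mathbb{R}/\mathbb{Z}$ of the values $\xi(ng)$ (which are rational points since $\xi$ has finite image as a character of a profinite group), combined with a careful processing order that resolves the relations; the latter can be handled using the standard effective decomposition of countable discrete abelian groups. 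Once the lifts are in hand, computability of $d$ and of the group operations on $\widehat G$ follows from the corresponding computability of $\widehat D$ and $\widehat K$, together with the computability of the $2$-cocycle recording how two lifts multiply modulo $K^{\perp}$.
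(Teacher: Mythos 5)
There is a genuine gap, and it sits exactly where your proposal says ``this reduces to \dots a careful processing order that resolves the relations'' and ``the computability of the $2$-cocycle.'' The decomposition $0\to K\to G\to D\to 0$ with $K$ profinite open and $D=G/K$ computable discrete is available (this is essentially the extension characterisation in Section~\ref{s:comp tdlc}), and your metric on $\widehat G$ is indeed compatible with the compact-open topology. The problem is the effectivity of the splice. First, extending a character $\xi$ of $K$ to $G$ coset-by-coset requires, at each step, knowing the dependence relations of the next coset representative over the subgroup generated so far; in an arbitrary computable presentation of $D$ this membership/dependence question is only $\Sigma^0_1$ and in general needs $0'$, and there is no ``standard effective decomposition of countable discrete abelian groups'' to appeal to -- Proposition 3.3 of \cite{Pontr} shows this obstacle is essential, since $\mathrm{Hom}(D,\mathbb T)$ need not contain any computable dense sequence for a badly chosen computable $D$. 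Second, and independently, the computably metrized presentation of $\widehat D$ you import from \cite{Pontr} is obtained only after replacing $D$ by a Dobritsa presentation $H$ with a computable maximal linearly independent set, and the isomorphism $D\cong H$ (equivalently the quotient map $G\to H$) is not computable. Consequently, even if you managed to choose lifts $\widetilde\xi$ with computable values, the cocycle $c(\xi,\xi')=\widetilde\xi+\widetilde{\xi'}-\widetilde{\xi+\xi'}$ is a character of $D$ given by its values on $D$, and there is no computable way to locate such a character as a point of the metrization of $\widehat H\subseteq\prod_i\mathbb T_i$; but without that, the group operation on your presentation of $\widehat G$ is not effectively continuous (and the extension does not split in general, e.g.\ $G=\mathbb Q_p$, so the cocycle cannot be dispensed with).

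This is precisely the difficulty the paper isolates in \S\ref{intumetric} and resolves with Proposition~\ref{uber-Dobritsa}: one cannot apply Dobritsa's theorem to $D$ (or to the $A_i$) as a black box, because that severs the computable link between the discrete quotient data and $G$; instead the Dobritsa-type re-presentation must be carried out \emph{simultaneously} for the whole system while maintaining computable projections, and this requires the Nurtazin-style construction of Section~\ref{sec:dob}. Your modular two-duality reduction does go through in the special case where $D$ is torsion, since then the maximal independent set is empty and no re-presentation is needed -- but that is exactly the protorsion case covered by Theorem~\ref{theo:dual}, not the general t.d.l.c.\ case. To repair the proposal you would need either an effective lifting/cocycle computation tied to the \emph{same} presentation of $\widehat D$ you metrize, which amounts to proving (a relativised form of) Proposition~\ref{uber-Dobritsa}, or some new argument replacing it.
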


% The theorem partially reduces the problem of computable metrizability of $\widehat{G}$ to the problem of computable presentability of the t.d.l.c.~$G$. 
% The aforementioned main result of \cite{Pontr} is a special case of Theorem~\ref{theo:pont}.
%Similarly to Theorem~\ref{pont:con}, Theorem~\ref{theo:pont} can be used to establish various \emph{reductions}  to estimate the complexity of classification problems for subclasses of locally elliptic and t.d.l.c.~groups. However, in the present article, the techniques used in the proof of Theorem~\ref{theo:pont} rather than the result itself will be used to derive corollaries.

We believe that the proof of Theorem~\ref{theo:pont} is of independent interest. To circumvent a difficulty in constructing a computable metric in Theorem~\ref{theo:pont}, we extend a well-known result of Dobritsa~\cite{feebleDobrica} (see Theorem \ref{thm:Dobritsathm}) from computable abelian group theory
  to arbitrary computable abelian t.d.l.c.~groups; this is Proposition~\ref{uber-Dobritsa}. Of course, the result of Dobritsa has no direct analog in the theory of computable topological groups. Thus, it is not surprising that the technical proof of  Proposition~\ref{uber-Dobritsa} relies on some novel strategies specific to the subject; see \S\ref{intumetric} for a detailed discussion.

How  about a converse of Theorem~\ref{theo:pont}?  As we have already mentioned above, the obvious obstruction  is that $\widehat{G}$ does not have to be t.d.l.c. We restrict ourselves to $G$ such that both $G$ and $\widehat{G}$ are t.d.l.c; see Subsection~\ref{s6} for a discussion of the general case. The abelian groups such that both the group and its dual are t.d.l.c.~are exactly the extensions of profinite groups by torsion groups. Equivalently, these are exactly the locally compact abelian  protorsion groups.  
A more careful analysis of  the proof of Theorem~\ref{theo:pont}  in this special case shows:

\begin{theorem}\label{theo:dual}
Suppose $G$ is an abelian t.d.l.c.~group such that $\widehat G$ is also t.d.l.c.. Then $G$ has a computable presentation if, and only if, $\widehat G$ has a computable presentation.
\end{theorem}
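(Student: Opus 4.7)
The plan is to leverage the structural description given just before the statement: since both $G$ and $\widehat{G}$ are t.d.l.c. abelian, $G$ is a locally compact protorsion group, and thus fits in a short exact sequence $0 \to K \to G \to G/K \to 0$ with $K$ a profinite compact open subgroup and $G/K$ a discrete torsion abelian group. Dually, $\widehat{G}$ fits in $0 \to K^{\perp} \to \widehat{G} \to \widehat{K} \to 0$ where $K^\perp \cong \widehat{G/K}$ is the profinite compact open dual of the discrete quotient and $\widehat{G}/K^\perp \cong \widehat{K}$ is discrete torsion. So the theorem is really the statement that this double-layer protorsion data transfers computably under duality.

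For the forward direction, I would start from a computable t.d.l.c. presentation of $G$ and first extract a computable profinite presentation of the compact open subgroup $K$ together with a computable presentation of the discrete torsion quotient $G/K$. This is exactly the kind of decomposition that the proof of Theorem~\ref{theo:pont} produces when combined with the effective Dobritsa-type Proposition~\ref{uber-Dobritsa}, which allows one to computably unwrap $G$ into its compact open and discrete layers. On each layer separately, the classical Smith computable duality between profinite groups and discrete torsion groups (already established in the compact/discrete torsion case) yields a computable profinite presentation of $K^\perp \cong \widehat{G/K}$ and a computable discrete torsion presentation of $\widehat{K}$.

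It then remains to reassemble these into a computable t.d.l.c. presentation of $\widehat{G}$ in the sense of Definition~\ref{def:procountable}(2). Concretely, one uses the effective duality of the two layers to transport a system of coset representatives for $K$ in $G$ to a system of coset representatives for $K^\perp$ in $\widehat{G}$, via the pairing $(\chi,g) \mapsto \chi(g)$. Since on the compact layer the character pairing is controlled by a finite computation at each quotient $G/K_n$ for a computable filtration $K_n$ of $K$, and since on the discrete layer duality is given by finite torsion pairings, the multiplication and inversion on $\widehat{G}$ become computable in this presentation. The main obstacle here is not any one layer but the effectivity of the extension step: one must verify that the proof of Theorem~\ref{theo:pont} actually gives back a computable t.d.l.c. presentation, not merely a computable metric, when the dual happens to be t.d.l.c. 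This amounts to checking that, under the protorsion hypothesis, the approximations used to build the computable metric on $\widehat{G}$ can be refined into a computable enumeration of a compact open profinite subgroup $K^\perp$ with a computable discrete torsion quotient.

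The converse direction is immediate from Pontryagin--van Kampen duality: since $\widehat{\widehat{G}} \cong G$ topologically, and since the hypotheses are symmetric in $G$ and $\widehat{G}$ (both being t.d.l.c.), applying the forward direction to $\widehat{G}$ in place of $G$ yields a computable presentation of $G$ from one of $\widehat{G}$.
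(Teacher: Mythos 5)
Your reduction to a single direction via $\widehat{\widehat{G}}\cong G$ is exactly the paper's first step, and extracting from a computable t.d.l.c.\ presentation of $G$ a computable profinite $K$ together with a computable discrete torsion quotient $G/K$ is legitimate --- though it is the content of the proposition characterising computable t.d.l.c.\ presentations as computable extensions of a profinite group by a discrete group, not of Theorem~\ref{theo:pont} combined with Proposition~\ref{uber-Dobritsa}; in the protorsion setting Proposition~\ref{uber-Dobritsa} is in fact vacuous, since every quotient $A_i$ of $G$ by a compact open subgroup is torsion, so the empty set is already a computable maximal linearly independent set. The layer-wise duality $K\leftrightarrow\widehat{K}$ and $G/K\leftrightarrow K^{\perp}$ via the Smith-type compact/discrete-torsion case is also fine.

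The genuine gap is the reassembly step, which is the entire content of the theorem and which you explicitly name (``the main obstacle'') but do not carry out. To obtain a computable t.d.l.c.\ presentation of $\widehat{G}$ in the sense of Definition~\ref{def:procountable}(2) along your route you would need, for instance, a computable $2$-cocycle $\widehat{K}\times\widehat{K}\to K^{\perp}$, hence a uniformly computable choice of an extension to $G$ of each character of $K$. Your proposed mechanism --- transporting a system of coset representatives for $K$ in $G$ to one for $K^{\perp}$ in $\widehat{G}$ via the pairing $(\chi,g)\mapsto\chi(g)$ --- does not do this: the cosets of $K^{\perp}$ in $\widehat{G}$ are indexed by $\widehat{K}$ (i.e.\ by lifts of characters of the compact layer), not by $G/K$, and the pairing gives no bijection between these index sets. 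A computable character-extension procedure does exist (each character of $K$ factors through a finite quotient $K/N_i$, and divisibility of $\mathbb{T}$ together with computability of orders in the torsion groups $G/N_i$ allows a greedy extension), but this must actually be written out; asserting that the construction of Theorem~\ref{theo:pont} ``can be refined'' is not a proof. The paper sidesteps the issue entirely: since each $A_i=G/N_i$ is computable discrete torsion, its dual is realised inside $\prod_j \mathbb{T}_j$ using only finite cyclic projections given by strong indices, so the $\widehat{A_i}$ acquire uniformly computable profinite presentations and the metric of Theorem~\ref{theo:pont} on their direct limit is a computable ultrametric, yielding the computable t.d.l.c.\ presentation of $\widehat{G}$ directly, with no dual cocycle to compute.
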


We note that, in contrast with Theorem~\ref{theo:pont},  the proof of Theorem~\ref{theo:dual} is (computably) uniform.

\

\subsection{Consequences}\label{ss5} We now discuss several applications of our computable duality results and of the techniques used to prove them. First, in \S\ref{ss:c1} we apply  Theorem~\ref{pont:con} to measure the complexity of the isomorphism problem for special classes of compact abelian groups. Then in  \S\ref{ss:c2}  we discuss an application of the methods developed in the proof of Theorem~\ref{theo:pont}. More specifically, it follows that the assumption of effective compactness in Theorem~\ref{pont:con} cannot be dropped. With a bit of extra work this result can be applied  to  simultaneously answer several open questions  about computable metric spaces.

\subsubsection{Applications to classification problems}\label{ss:c1} Our first corollary is concerned with the complexity of the effective classification problem for various subclasses of compact Polish abelian groups. We follow \cite{GonKni,MDsurvey} and measure this  using the special index sets which are called the characterisation problem and the isomorphism problem (to be defined in Subsection~\ref{index:subsection}).  Informally, we use the universal Turing machine to list all (partially) computably metrized group presentations and ask which indices (pairs of indices) correspond to members of a certain class of groups (respectively, to isomorphic members of the class).
We attack Problem 21(3) of \cite{MDsurvey}: 
 
Measure the complexity of the effective classification problem 

\hfill for natural subclasses of compact Polish groups.

 There are many  potential applications of Theorem~\ref{pont:con} to various subclasses of connected compact abelian groups. We state only three such applications below:

\begin{corollary}\label{corcorcor}\mbox{}
For each of the following classes, both the characterization problem and the isomorphism problem are arithmetical:

\begin{enumerate}
\item compact abelian Lie groups;
\item direct products of solenoid groups;
\item connected compact abelian groups of finite covering dimension.
\end{enumerate}

\end{corollary}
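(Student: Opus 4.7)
The plan is to use Theorem~\ref{pont:con} (and the compact/discrete duality of~\cite{Pontr} when disconnected groups arise) to pass to the dual side, where each of the three classes becomes a familiar subclass of discrete abelian groups with a classical, arithmetical classification. Specifically, compact abelian Lie groups dualize to finitely generated abelian groups; direct products of solenoids dualize to completely decomposable torsion-free abelian groups, i.e.\ direct sums of subgroups of $\QQ$; and connected compact abelian groups of finite covering dimension $n$ dualize to torsion-free abelian groups of rank $n$. By the remark immediately following Theorem~\ref{pont:con}, from any computably metrized presentation of $G$ one uniformly produces a $\Delta^0_2$-presentation of $\widehat G$, so the dualization step costs only a single jump.

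For the characterization problem, one then checks arithmetically whether $\widehat G$ lies in the target class. Being finitely generated is $\Sigma^0_3$ in a $\Delta^0_2$-presentation (guess a generating tuple and verify that every element lies in the subgroup it generates). Being torsion-free of finite rank is arithmetical by a similar guess-and-check on a maximal $\ZZ$-independent tuple, combined with the $\Pi^0_2$ condition of torsion-freeness. Being completely decomposable is more delicate, but the classical structural characterizations in terms of types of elements and the ability to witness the decomposition finitely rank-by-rank yield an arithmetical predicate.

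For the isomorphism problem we invoke the classical invariants. Finitely generated abelian groups are classified decidably by invariant factors. Baer's theorem classifies completely decomposable groups by the multiset of types of their rank-one summands, and each type is arithmetical in a $\Delta^0_2$-presentation. The Kurosh--Malcev invariants (matrices over the $p$-adic integers) classify finite rank torsion-free abelian groups, and are again arithmetical. Combining the dualization step with these arithmetical classifications yields the desired arithmetical bound on the isomorphism problem.

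The main obstacle will be running these invariant computations uniformly from a $\Delta^0_2$-presentation, rather than a computable one, of $\widehat G$. Here Khisamiev's theorem~\cite{Hisa2}, already invoked in the proof of Theorem~\ref{pont:con}, relativizes to show that a $\Delta^0_2$-presentable torsion-free abelian group has a $\emptyset'$-computable presentation. One then applies the classical effective classification theorems with a $\emptyset'$-oracle, keeping the overall bound within the arithmetical hierarchy throughout.
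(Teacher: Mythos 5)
Your overall route coincides with the paper's: dualize by running the cohomology construction behind Theorem~\ref{pont:con} relativized to $0'$ (plus Khisamiev's theorem~\cite{Hisa2}, relativized) to get, uniformly, a $\Delta^0_2$-presentation of the discrete dual, and then settle the characterization and isomorphism problems on the discrete side. For (1) and (3) your plan is essentially sound, though the paper argues more simply via relative computable categoricity: free abelian groups of finite rank, and more generally finite-rank torsion-free groups, are relatively computably categorical, so one only has to search for a $\Delta^0_2$-isomorphism, which is arithmetical. Your Kurosh--Malcev step in (3) is both unnecessary and shaky as stated: equivalence of Kurosh matrix sequences quantifies over $p$-adic matrices, i.e., over infinite objects, so it is not obviously arithmetical; the direct observation that an isomorphism between finite-rank groups is determined by the images of a maximal independent tuple together with a rational matrix (or categoricity, as in the paper) is what you actually need.

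The genuine gap is in (2). You claim that complete decomposability of the $\Delta^0_2$-presented dual, and the Baer classification by the multiset of types of rank-one summands, ``yield an arithmetical predicate'' via classical structure theory. That is precisely the non-trivial content being glossed over: the natural formalizations of ``is a direct sum of rank-one groups'' and ``has the same summand types with the same multiplicities'' begin with a second-order quantifier over decompositions, giving a priori only a $\Sigma^1_1$ bound, and the types realized by elements of a completely decomposable group include meets of the summand types, so the invariant cannot simply be read off elementwise. That both the characterization problem and the isomorphism problem for completely decomposable groups are arithmetical is the main theorem of Downey and Melnikov~\cite{DoMel1}; the paper proves (2) exactly by citing that result and relativizing it to $0'$, and its introduction explicitly singles out (2) as the non-trivial clause for this reason. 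Without invoking (or re-proving) the result of~\cite{DoMel1}, your argument for (2) does not go through.
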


As usual, the corollary can be relativized to an arbitrary oracle.
The corollary can be informally interpreted as follows. Given a presentation of a group, we can use only local properties of the presentation (such as  finite covers and their first-order or computable properties) to recognise whether a given group is, say, homeomorphic to a product of solenoid groups. Also, given two such groups, we can arithmetically decide whether they are (algebraically) homeomorphic.
Note that the usual definition involves at least one set-quantifier.

While (1) and (3) are relatively straightforward consequences of Theorem~\ref{pont:con}, (2) is non-trivial since it relies on the technical main result of \cite{DoMel1}.
In our proof we establish only somewhat crude arithmetical upper bounds. Perhaps   with some extra work at least some of our estimates can be turned into optimal completeness results; we leave this open.

\subsubsection{Applications to the foundations of computable topology}\label{ss:c2}
We begin this paragraph with the natural question:
Is the assumption of effective compactness necessary to prove $(2) \rightarrow (1)$ of Theorem~\ref{pont:con}?
In view of  $(2) \rightarrow (1)$ of Theorem~\ref{pont:con}, this is equivalent  to answering  the following more general question in the abelian case:  

\begin{itemize}
\item[(Q1)] Is every computably metrized compact connected~group \emph{homeomorphic} to an effectively compact one?
\end{itemize}

 We will answer this question below in the negative. Interestingly, our next theorem which answers this question also solves several further problems which we state and discuss next.

\

One of the first tasks in any emerging theory is to establish the equivalence (or non-equivalence) of some of the most basic definitions and assumptions
which lie at the foundations of the theory. Point-set topology is notorious for its zoo of various notions of regularity of spaces, the most fundamental of which are known to be non-equivalent via relatively straightforward but clever counterexamples.
In stark contrast, \emph{computable} topology seems to be essentially completely missing the proofs that many of its computability-theoretic notions are (non-)equivalent.
This is partially explained by the fact that proving (non-)equivalence of such notions presents a significant challenge.  %Another example is the recent construction of a computably metrized compact space not homeomorphic to any  effectively compact space suggested in \cite{HKSel-arxiv}.
%Recall that a computably metrized space is effectively compact if there is a computable enumeration of all of its finite covers by basic open balls of radius $2^{-n}$, uniformly in $n$. The recent paper~\cite{lupini} uses topological group theory and homological algebra to produce  an example of a computably metrized connected compact Polish \emph{group} not homeomorphic to any effectively compact Polish \emph{space}. 
% Ng has recently announced a third proof of this fact  that uses a $0'''$ priority construction. We note that, in contrast, every computably metrized Stone space is homeomorphic to an effectively compact one; this follows from the results in~\cite{HKSel-arxiv, HTMN-ta} as explained in the introduction fo \cite{HTMN-ta}.  All these results mentioned above are very recent and rely on  advanced techniques.
For instance, the problem of comparing effectively compact and computably metrized presentation has recently attracted a considerable attention.
 It follows from \cite{uptohom,topsel} that every computably metrized Stone space is homeomorphic to an effectively compact one; see \cite{uptohom} for an explanation. In contrast, Lemma 3.21 of~\cite{topsel} gives an example  of an effectively metrized compact Polish space which is not homeomorphic to an effectively compact Polish space.
Ng has also recently announced that every effectively compact metric space is homeomorphic to a computably metrized space which is not effectively compact. All these results rely on advanced modern techniques. In a recent personal communication with the second author, Ng has   raised the following question: 
\begin{itemize}
\item[(Q2)]
Is every computably metrized compact \emph{connected} Polish space homeomorphic to an effectively compact one? 
\end{itemize}
This is similar to our question above stated for connected groups, but it is concerned with arbitrary connected Polish spaces.
 
Somewhat unexpectedly, it takes much effort to prove that there exists a $\Delta^0_2$-metrized Polish space not homeomorphic to a computably metrized one (answering a question posed by Selivanov~\cite{Selitop}); see \cite{topsel, uptohom} for three substantially different proofs all of which are  non-trivial. All known examples share the same feature, namely they use  connected components of the constructed spaces to ``code'' an undecidable set. % The recent papers \cite{uptohom,topsel} contain three substantially different  constructions  of $\Delta^0_2$-metrized compact spaces which are not homeomorphic to any computably metrized space (answering a question posed by Selivanov~\cite{Selitop}).
 Kihara in his recent CiE2020 talk asked:
 
\begin{itemize}
\item[(Q3)]
 Is every $\Delta^0_2$-metrized \emph{connected} compact Polish space  homeomorphic to a computably metrized one? 
 \end{itemize}
 
 \noindent This question was also independently raised in \cite{uptohom} (Question 2). Also,  \cite{uptohom} (Question 3) asks the analogous question for  $\Delta^0_2$-metrized connected Polish groups.
 
 \
 
We will see in Corollary~\ref{corcor} that,  with the help of Theorem~\ref{pont:con}, our result stated below can be used to answer all these questions in the negative using a unified approach.

 \begin{theorem}\label{theo:nonpont}
There exists  a computably metrized connected 
  group $G$ such that  $\widehat{G}$ has no computable presentation as a discrete group.  
\end{theorem}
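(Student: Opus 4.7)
The plan is to combine Theorem~\ref{pont:con} with a direct diagonalising construction. By the contrapositive of $(1)\Rightarrow(2)$ of Theorem~\ref{pont:con} applied to the torsion-free discrete abelian group $\widehat G$ (together with Pontryagin reflexivity $\widehat{\widehat G}\cong G$), it suffices to construct a computably metrized connected compact abelian Polish group $G$ that admits \emph{no} effectively compact presentation as a topological abelian group. By the theorem of Khisamiev cited just after Theorem~\ref{pont:con}, this in turn reduces to producing such a $G$ whose character group $\widehat G$ is not c.e.-presentable as a discrete torsion-free abelian group, since for such groups c.e.-presentability and computable presentability coincide.

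I would realise $G$ as an inverse limit of tori $G=\varprojlim \mathbb{T}^{d_n}$ with surjective bonding epimorphisms $\pi_n\colon\mathbb{T}^{d_{n+1}}\to\mathbb{T}^{d_n}$ given by integer matrices $M_n$. The metric on $G$ is fixed \emph{in advance} by choosing a computable summable weighting and a computable dense set of coherent sequences of rational points in the finite tori, so that $G$ is a computably metrized Polish group with computable group operations irrespective of how the tower is continued. The freedom in choosing $(d_n, M_n)$ is used to diagonalise: I enumerate all c.e.\ candidates $(A_e)_{e\in\NN}$ for presentations of countable torsion-free abelian groups and, via a (possibly $0'$-assisted) priority argument, arrange that for every $e$ no isomorphism $A_e\to\widehat G\cong\varinjlim(\ZZ^{d_n}, M_n^{\top})$ can hold. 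The spoiling is done by inserting, at a sufficiently late stage $n=n_e$, a bonding matrix $M_n$ whose transpose either realises a divisibility relation in $\widehat G$ absent in $A_e$, or refuses one claimed by $A_e$.

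The main obstacle is that these adaptive spoiling actions must not disturb the metric already committed on $G$. This is arranged by the choice of metric: since the weighting at level $n$ scales like $2^{-n}$, modifications of the tower beyond stage $n$ contribute negligibly to distances at committed dense points, so the computable moduli of convergence are preserved throughout. The verification then combines (i) a routine tail estimate to conclude that $G$ is computably metrized; (ii) the diagonalisation argument, which succeeds because each $R_e$ only requires inspection of a $\Sigma^0_2$ property of $A_e$; and (iii) Khisamiev's theorem to upgrade ``no c.e.\ presentation of $\widehat G$'' into ``no computable presentation of $\widehat G$''. Connectedness and compactness of $G$ are automatic for an inverse limit of tori under surjective bonding maps. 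The delicate quantitative balance between enough freedom to diagonalise and tight enough control to preserve the computable metric is the crux.
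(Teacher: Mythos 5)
Your overall shape (a solenoid-type inverse limit of tori, with the dual read off as a direct limit, and late, ``small'' changes to the tower so that the metric side stays computable) is in the spirit of the paper's construction, but as written there are two genuine gaps. First, the diagonalisation itself. You propose to defeat every c.e.\ candidate $A_e$ directly, by inserting a bonding matrix whose transpose ``realises a divisibility relation in $\widehat G$ absent in $A_e$, or refuses one claimed by $A_e$'', and you assert each requirement is a $\Sigma^0_2$ matter. Neither claim is justified for general towers $\varinjlim(\ZZ^{d_n},M_n^{\top})$: for duals of infinite rank, deciding whether $A_e\cong\widehat G$ is not arithmetical (the isomorphism problem for torsion-free abelian groups is $\Sigma^1_1$-complete), and a single divisibility fact is not isomorphism-invariant --- a purported isomorphism may send the offending element of $A_e$ anywhere, so one matrix insertion does not kill all embeddings. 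The paper avoids this entirely by fixing the isomorphism type in advance: $\widehat G=\langle 1/p_i: i\in U\rangle\le\QQ$ is rank one, and by Maltsev's classical characterisation such a group has a computable presentation iff $U$ is c.e.; so one only needs a non-c.e.\ set $U$ (obtained by a standard, purely set-theoretic diagonalisation), with no requirement-by-requirement interaction between the group construction and candidate presentations. Moreover $U$ is taken d-c.e.\ via a function changing at most once per argument, with the new value larger than the stage --- a constraint that is essential for the second point below and which your ``$0'$-assisted priority argument'' (allowing repeated injuries at a level) does not respect. Also note that your opening reduction through Theorem~\ref{pont:con} and effective compactness is circular: after applying Khisamiev you are back to the original goal (``$\widehat G$ not computably presentable''), so that paragraph does no work.

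Second, the preservation of the computable metric is exactly where the real work lies, and ``the weighting at level $n$ scales like $2^{-n}$, so later modifications contribute negligibly'' does not suffice. The committed special points are (approximations to) \emph{coherent} sequences; when you change a bonding map at level $m$, the committed data through level $m$ is in general no longer coherent with the new map, so the committed approximations may fail to be approximations of any point of the new group at all --- the issue is membership/extendibility, not the size of the tail contribution. In the paper this is resolved by a concrete arithmetic mechanism: when the rule $p\chi_2=\chi_1$ is replaced by $q\chi_2=\chi_1$, the maps $x\mapsto px$ and $x\mapsto qx$ agree on all points of the form $k/(q-p)$, and since $q$ is chosen larger than the current stage, every committed $2^{-s}$-interval contains such a point; hence every committed approximation can be recycled as an approximation to a genuine point satisfying the new rule, and (because each rule changes at most once) this repair happens only once per coordinate. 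Without an analogous invariant your construction does not show that the pre-committed dense set lies in (and is dense in) the final $G$, nor that the operations are computable on it; so the ``routine tail estimate'' in (i) of your verification is precisely the missing argument.
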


The proof of Theorem~\ref{theo:nonpont} is quite different from the proof of  a similar profinite counterexample in~\cite{Pontr}; it relies on a new diagonalization strategy. Even though the constructed group is connected compact, the proof is much more technically related to Theorem~\ref{theo:pont} than to Theorem~\ref{pont:con}.
With some extra work, we will derive:

\begin{corollary}\label{corcor}\mbox{}\rm
\begin{enumerate}
 \item There exists a computably metrized connected Polish group not homeomorphic to any effectively compact Polish space. (This simultaneously  answers (Q1) and (Q2).)
 
 \item There exists a $\Delta^0_2$-metrized connected Polish group not homeomorphic to any computably metrized Polish space. (This simultaneously answers (Q3) and   Question 3 of~\cite{uptohom}.)

\end{enumerate}
\end{corollary}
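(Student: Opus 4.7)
The plan is to route both parts of the corollary through the $(2)\to(1)$ direction of Theorem~\ref{pont:con}, exploiting the observation recorded just below Theorem~\ref{pont:con} that this implication does not require computable group operations on the effectively compact side: it proceeds via an effective computation of the first $\check{C}$ech cohomology of the underlying compact space, which is a purely topological invariant. Thus, given a homeomorphism between an effectively compact Polish space $X$ and a connected compact abelian Polish group $G$, one can effectively produce a computable presentation of $\widehat{G}$ as a discrete group, without any effectivity hypothesis on the algebraic operations transported to $X$.

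\medskip

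For part~(1), let $G$ be the compact connected Polish group produced by Theorem~\ref{theo:nonpont}, so that $G$ is computably metrized yet $\widehat{G}$ admits no computable presentation as a discrete torsion-free abelian group. If $G$ were homeomorphic to an effectively compact Polish space $X$, the cohomological form of $(2)\to(1)$ of Theorem~\ref{pont:con} applied to $X$ would yield a computably enumerable presentation of $\check{H}^1(X;\mathbb{Z})\cong\check{H}^1(G;\mathbb{Z})\cong\widehat{G}$; Khisamiev's theorem (invoked in the proof of Theorem~\ref{pont:con}) would then upgrade this to a computable discrete presentation of $\widehat{G}$, contradicting Theorem~\ref{theo:nonpont}. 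Since $G$ is a Polish group, the single witness $G$ simultaneously refutes~(Q1) and~(Q2).

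\medskip

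For part~(2), relativize Theorem~\ref{theo:nonpont} to the oracle $0'$: one obtains a $\Delta^0_2$-metrized connected compact Polish group $G$ whose dual $\widehat{G}$ has no $0'$-computable (equivalently, no $\Delta^0_2$) presentation as a discrete group. Suppose $G$ is homeomorphic to some computably metrized Polish space $Y$. Then $Y$ is compact, so the quoted fact (cited in the excerpt and in \cite{uptohom,topsel}) that $0'$ uniformly enumerates the basic $2^{-n}$-covers of any computably metrized compact Polish space produces a $0'$-effectively compact presentation of $Y$. Applying the $0'$-relativization of $(2)\to(1)$ of Theorem~\ref{pont:con} to $Y$, via the homeomorphism $Y\cong G$, yields a $0'$-computable discrete presentation of $\widehat{G}$, contradicting the choice of $G$. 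This settles~(Q3) and Question~3 of~\cite{uptohom}.

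\medskip

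The only delicate point is the joint requirement that $(2)\to(1)$ of Theorem~\ref{pont:con} (a)~functions starting from a merely effectively compact underlying space rather than a computable topological group, and (b)~relativizes uniformly in an oracle. Point~(a) is explicitly promised in the discussion after Theorem~\ref{pont:con} and is natural, since the isomorphism $\check{H}^1(X;\mathbb{Z})\cong\widehat{G}$ is topological; point~(b) requires only that the constructive $\check{C}$ech-cohomology computation and Khisamiev's passage from c.e.\ to computable torsion-free abelian groups relativize, which is routine. Granted these, both parts of Corollary~\ref{corcor} are consequences of the single group supplied by Theorem~\ref{theo:nonpont}, in its unrelativized and $0'$-relativized forms respectively.
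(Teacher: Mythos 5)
Your proposal is correct and follows essentially the same route as the paper: part (1) uses the group from Theorem~\ref{theo:nonpont} together with the observation that the $(2)\to(1)$ direction of Theorem~\ref{pont:con} needs only the effectively compact space (not the group operation), and part (2) relativizes Theorem~\ref{theo:nonpont} to $0'$ and notes that a computable metrization of the compact underlying space yields a $0'$-effectively compact presentation, to which the relativized argument applies. Your extra remarks about the relativizability of the {\v C}ech-cohomology computation and of Khisamiev's theorem just make explicit what the paper leaves implicit.
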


Note that in the corollary, although both counterexamples happen to be computably metrized compact groups, we diagonalize against all \emph{spaces}. 
Jason Rute  gave  an elegant argument showing that a computably metrized compact Polish group  $G$ is effectively compact iff the left Haar probability measure is computable iff  the right Haar probability measure is computable. See his post \cite[Section 17]{LogicBlog:16}.
It thus follows from (1) of Corollary~\ref{corcor} that there is a computably metrized connected compact group not homeomorphic to any computably metrized compact group with computable Haar probability measure.  Willem Fouche  asked whether every computable compact Polish group has computable Haar measure. A fixed presentation of a group can have this property (see \cite{Pontr} and  \cite[Theorem 15.1]{LogicBlog:16}). Our corollary gives the strongest possible negative answer to this question, in the sense that no computable presentation of the constructed group can have computable Haar measure.

\subsection{The locally elliptic case}\label{s6}
Recall that the duals of t.d.l.c.~groups are called locally elliptic groups;  the are exactly the extensions of compact abelian groups by discrete torsion abelian groups. Although the terminology is perhaps not self-explanatory, this property can be viewed as a generalization of local finiteness of a discrete group.  Platonov \cite{plat} initiated the systematic study of not necessarily abelian locally elliptic groups in the 1960s. We thank Yves Cornulier for this bit of history and cite \cite{Yv} for some recent results on locally elliptic groups.

We leave open whether our methods can be combined and extended to prove computability (or non-computability) of the duality between arbitrary t.d.l.c.~and locally elliptic abelian groups, we conjecture that it should be possible.
 Perhaps, one needs a suitable general notion of effective local compactness, such as a strong version of effective $\sigma$-compactness, to make the converse of Theorem~\ref{theo:pont} work.
 However, at the current state of the theory we do not seem to possess enough techniques to cover this more general case, and thus  leave it to be investigated in the future.

\section{Preliminaries}

\subsection{Computable topological spaces} A \emph{computable topological space} is a pair $(X, \nu)$, where $\nu : \mathbb{N} \rightarrow \tau$ is a numbering of a countable basis of the topological space $X$, so that
\[
\nu(i)\cap \nu(j)  = \bigcup_{(i,j,k)\in R} \nu(k),
\]
where $R\subseteq \mathbb{N}^3$ is c.e.  More generally, an open set $U$ of $X$ is c.e.~or  effectively $\SI 1$ (relative to $\nu$) if there is a c.e.~ set
$I$  such that $U = \bigcup_{i \in I}\nu(i)$. In a computable topological space, the intersection of two basic open sets is  a c.e.~open set in a uniform way.
\begin{definition}\label{def:point} Given  a computable topological space $(X,\nu)$, we call 
\[
N^x = \{i: x \in \nu(i)\}
\] 
\emph{the name of $x$}.   A point $x$ is called  \emph{computable} if it has a computably enumerable name.
\end{definition}

\begin{definition} \label{def:cont}
A function $f\colon X\to Y$  between two computable topological spaces is {\em effectively continuous}  if there is a c.e.\ family $F$ of pairs of (indices of) basic open sets  such that:
\begin{itemize}
\item[(C1):] for every $(U,V)\in F$, $f(U)\subseteq V$;
\item[(C2):]  for every $x \in X$ and basic open $E \ni f(x)$ in $Y$  there exists a basic open $D \ni x$ in $X$ such that $(D, E) \in F$. 
\end{itemize}
\end{definition}

\begin{definition}\label{def:open} A function $f\colon X\to Y$ between two computable topological spaces is \emph{effectively open} if there is a c.e.\ family $F$ of pairs of basic open sets such that 
\begin{itemize}
\item[(O1):] for every $(U,V)\in F$, $f(U)\supseteq V$;
\item[(O2):]  for every $x \in X$ and any basic open $E \ni x$ there exists a basic open $D \ni f(x)$  such that $(E,D) \in F$. 
\end{itemize}
\end{definition}

\subsection{Effectively  compact metric spaces}
  Recall that a ball in a computably metrized space is basic if it has a positive rational radius and is centered in a special point.
       \begin{definition}\label{def:comp}    
A computably metrized Polish space is effectively compact if there is a uniformly computable procedure which, given $\epsilon \in \mathbb{Q}^+$, enumerates all  covers of the space by basic open $\epsilon$-balls.
     \end{definition}
%     Definition~\ref{def:comp}   can be extended to computably metrized spaces; for that, take (a) of Proposition~\ref{prop:effc} as the definition. Indeed, in our later poofs we will use a modified 
%     formulation of (a),  which is equivalent to (a) in the case of a computable closed subset $C = [T]$  of $\NN^\NN$. 

\begin{remark} We note that an apparently weaker definition that also occurs in the literature is  via  an effective version of total boundedness: one asks that given  $\epsilon \in \mathbb{Q}^+$ one can compute \emph{some}  cover of the space by basic open $\epsilon$-balls. However,  this definition is in fact equivalent to the one we use. To obtain our  version from the weaker formulation, the idea as follows. Take  a finite collection $(B_\epsilon (x_i))$ of  basic open $\epsilon$-balls. If it is a cover, the values of the continuous function $z \to \sup_i (\epsilon - d(z,x_i))$  are all positive, and hence all greater than $2\delta$ for a sufficiently small but unknown $\delta$. So we can  wait for a  rational $\delta >0$ so that for the cover $(B_\delta(y_k)$ by finitely many $\delta$-balls we are effectively given by hypothesis, for each $k$ there is $i$ such that $d(y_k, x_i)< \epsilon - \delta$. This implies that $B_\delta(y_k) \sub B_\epsilon(x_i)$, and hence verifies that $(B_\epsilon (x_i))$ is a cover. 
So we  add it to the list.  \end{remark}

\subsection{Computable Polish spaces and groups}
Examples of computable topological spaces can be obtained using the following concept. \begin{definition} A {\em computable (Polish) metric space} is a triple $(M, d, (x_i)_{i \in \NN})$, where $(M,d)$ is a Polish metric space and $(x_i)_{i \in \NN}$ is a dense sequence in $M$ such that, for any $i,j \in \NN$, $d(x_i, x_j)$ is a computable real uniformly in $i,j$.

\end{definition}
The points $x_i$ in the dense sequence $(x_i)_{i \in \NN}$ are called \emph{special points}.
A topological space $X$ is  {\em computably metrizable} if  there exists a computable metric on the set of natural numbers such that the completion of this metric space  
is homeomorphic to $X$.  It is clear that such a space is computable as a topological space: let $\seq {\nu(k)}\sN k$ be an effective listing of the open balls around special points with a radius of the form $\tp{-r}$.

 \begin{definition}\cite{MeMo} \label{def:metrgr}
 A Polish group $G$ is \emph{computably metrizable} if there is a dense set $(x_i)_{i \in \mathbb{N}}$ in $G$ and a metric $d$ compatible with the topology of $G$ such that:
 \begin{itemize}
 \item  $(G, d, (x_i)_{i \in \NN})$ is a computable (Polish) metric space;
 \item the group  operations of  $G$ are effectively continuous
 %\footnote{Note that if the group is effectively compact, then the operations are automatically effectively open as well.  We will not use this in this article and therefore we will not clarify why.}. 
 % So why it is there? Only distracts. Andre
 \end{itemize}
 \end{definition}

A pair $( (x_i)_{i \in \mathbb{N}}, d)$ as in  the definition above is called a \emph{computable Polish presentation},  or a \emph{computably metrized presentation}, of $G$.

\section{Computable   presentations of t.d.l.c.\ abelian groups} \label{s:comp tdlc}
 In Section~\ref{ss4} we discussed totally disconnected locally compact (t.d.l.c.) abelian groups. In this section we give  a formal definition of computability for such  groups. Our approach in this  abelian   case  is via a definition of computability for    the  larger class of procountable groups, which we recall next.

Suppose   we are given a sequence of  groups  $(A_i)\sN i$   such that each $A_i$ is   countable discrete. Suppose we are also given  epimorphisms   $\phi_i: A_{i} \rightarrow A_{i-1} $    for  each $i>0$.  The inverse limit $\varprojlim (A_{i},\phi_i)$ is concretely defined as the closed subgroup of the topological group $\prod_{i\in \NN} A_i$ consisting of those $g$ such that $\phi_i(g(i))= g(i-1)$ for each $i>0$. 
 \begin{definition} A~topological group $G$ is called \emph{procountable} if   $G\cong \varprojlim (A_{i},\phi_i)$ for some sequence $(A_i,\phi_i)\sN i$ as above. \end{definition} 
%
%The  second, equivalent condition in the definition shows that every procountable group  $G$ is isomorphic to a closed subgroup of $S_\infty$.
  
  It is well-known~\cite[Thm.\ 1.5.1]{Becker.Kechris:96} that a  Polish group $G$ is isomorphic to a  closed subgroup of $S_\infty$ if, and only if,  it has a neighbourhood  basis of the neutral element consisting of open subgroups. Furthermore,  $G$ is procountable if, and only if,  it has such a neighbourhood basis where the open subgroups are normal. In particular, each abelian group isomorphic to a closed subgroup of $S_\infty$   is procountable.   
  
Among the  procountable (not necessarily abelian) groups, being t.d.l.c.\ can be   characterized easily.

  %%%
     \begin{fact} \label{fact:char tdlc} Suppose  $G\cong \varprojlim (A_{i},\phi_i)$ for some sequence $(A_i,\phi_i)\sN i$ as above. Then \bc $G $ is locally compact $\LR$ $\mathit  {ker\ } \phi_i$ is finite for all  sufficiently large $i$. \ec  \end{fact}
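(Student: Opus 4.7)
My plan is to analyze the topology of $G$ through the descending chain of open normal subgroups $K_i := \ker \pi_i$, where $\pi_i : G \to A_i$ is the $i$th projection. By the product topology on $\prod_i A_i$ and discreteness of the factors, the $K_i$ form a neighborhood basis of the identity in $G$. Since each $K_i$ is also closed, this reduces the question: $G$ is locally compact iff some $K_N$ is compact.

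The key structural observation I would establish is that $K_i$ is naturally isomorphic, as a topological group, to the inverse limit $\varprojlim_{j \geq i} L_{j,i}$, where $L_{j,i} := \ker(\phi_{i+1}\circ\cdots\circ\phi_j : A_j \to A_i)$ with the induced quotient maps. This is immediate from the description of $G$ as coherent sequences: an element of $K_i$ is precisely a coherent sequence whose coordinates at indices $\leq i$ vanish (using the compatibility $g(k) = \phi_{k+1}(g(k+1))$), and its remaining coordinates form a coherent sequence in the induced system of the $L_{j,i}$.

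Given this, both directions are short. For $(\Leftarrow)$, if $\ker\phi_k$ is finite for all $k > N$, then each $L_{j,N}$ is an iterated extension of the finite groups $\ker\phi_{N+1},\ldots,\ker\phi_j$ and hence finite. Therefore $K_N$ is a projective limit of finite discrete groups, so profinite and in particular compact; being open, it is a compact neighborhood of the identity. For $(\Rightarrow)$, suppose some $K_N$ is compact. For each $j > N$ the restriction $\pi_j\uhr{K_N} : K_N \to A_j$ is a continuous homomorphism into the discrete group $A_j$, so its image is a compact subset of a discrete space and hence finite. Using the surjectivity of the inverse system, this image equals $L_{j,N}$; since $\ker\phi_j \subseteq L_{j,N}$ (taking $i = j-1$ as an intermediate step: $\ker\phi_j \subseteq L_{j,N}$ because anything killed by $\phi_j$ already maps to $0$ in $A_{j-1}$, and hence in $A_N$), each $\ker\phi_j$ is finite for $j > N$.

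The main subtlety is verifying the identification $\pi_j(K_N) = L_{j,N}$ and the inverse-limit description of $K_N$, both of which depend on the standard lifting property for inverse limits of surjective systems indexed by $\NN$ (every element of $L_{j,N}$ lifts to a coherent sequence in $K_N$ by choosing preimages under the $\phi_k$ for $k > j$). Once this lifting is in place, the remainder of the argument is a brief diagram chase.
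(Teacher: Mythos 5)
Your argument is correct and takes essentially the same route as the paper's: the kernels $K_i$ of the projections $G \to A_i$ are open (and closed) subgroups forming a neighbourhood basis of the identity, the backward direction exhibits $K_N$ as a profinite, hence compact, inverse limit of the finite groups $L_{j,N}$ (the paper's $B_j = N_0/N_j$ after normalizing $N=0$), and the forward direction deduces finiteness of the kernels from compactness of some $K_N$. The only cosmetic difference is in that forward direction, where the paper invokes van Dantzig's theorem and the finite index of the open subgroup $N_i$ in the compact $N_{i-1}$, while you use a compact neighbourhood directly together with finiteness of the continuous image of the compact $K_N$ in the discrete $A_j$ and the lifting property of the surjective system.
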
 
     %%%
     \begin{proof} For each $i $ let $N_i\le G$ be   the kernel of the natural epimorphism $G\to A_i$, an open normal subgroup of $G$. Identifying $A_i$ with $G/N_i$ via this epimorphism,   we have $\phi_i(gN_i) = gN_{i-1}$ for each $g\in G$, and in particular $\mathit{ker\ } \phi_i= N_{i-1}/N_i$ for $i>0$.       
     
 For  the implication ``$\RA$", note that $G$ as a  t.d.l.c.\ group  has a compact open subgroup~$K$ by a theorem of van Dantzig. Since the $N_i$ form a basis of neighbourhoods of $1$ in $G$, we have $N_{i-1}\le K$ for sufficiently large $i$. So $N_{i-1}$ is compact, whence $N_i$ has finite index in it.

 For  the implication ``$\LA$", we may assume that \textit{each} $\phi_i$ has a finite kernel. For $i>0$ write $B_i= N_0/N_i $, a finite subgroup of $A_i$. Let $\phi'_i= \phi_i| {B_i}$.  Note that $\phi_i' \colon B_i \to B_{i-1}$ is onto for each $i$, and the group $N_0 $ is topologically isomorphic to $  \varprojlim_{i>0} (B_i, \phi_i')$. So $N_0$ is profinite and hence compact. Since $N_0$ is open in $G$, this shows that 
 $G$ is locally compact.
 \end{proof}

Shortly, in Def.\ \ref{def:procountable} we will consider computable procountable presentations, and introduce an effective version of the finiteness condition in Fact~\ref{fact:char tdlc}  in order to obtain a notion of a computable locally compact procountable group. Each t.d.l.c.\ group $G$ is isomorphic to a closed subgroup of $S_\infty$, and if $G$  is abelian then it is also procountable as noted above. Thus,   in taking our  approach to computability via the procountable groups, we include the case of    \emph{abelian} t.d.l.c.\ groups, which  suffices for   this paper.

%
%Recall that the group operations of a topological group $(G, \cdot, {}^{-1})$ are both continuous and open as maps.

\subsection{Computable t.d.l.c.\  presentations} 

In computability theory,  a \emph{strong index} $k$ for a finite set $F \sub \NN$ is a direct encoding of the set by a single natural number; e.g.\ one can take  $k=\sum_{n\in F} \tp n$. A \emph{computable index} for a set $S$ is the code for a Turing program computing $S$. 
By a strong index for a  finite  group with domain a subset of $\NN$, we will mean a pair of strong indices as above, one for the domain and one for the group operation.

 Towards Def.\ \ref{def:procountable}, we first recall a definition going back to  LaRoche \cite{LaRo1} and Smith \cite{Smith1}. 
 \begin{definition}  \label{def: LaRoche} A  \emph{computable profinite presentation} of a profinite group $P$ is a sequence  of finite groups $A_i$ and epimorphisms 
$\phi_i: A_{i} \rightarrow A_{i-1}$ (for $i> 0$),  all given by   uniformly strong indices, such	 that $P = \varprojlim (A_i, \phi_i)$.  \end{definition}

Recall  that a computable     discrete  group $G$ is
a group upon the domain $\mathbb{N}$ in which the   operations are represented by computable functions (Maltsev~\cite{Ma61}, Rabin~\cite{abR}). 
 We now widen the definition of La Roche and Smith in  that we allow the $A_i$ to be  computable groups, and    require computable indices for the $A_i$ and the $\phi_i$, uniformly in $i$.

\begin{definition} \label{def:procountable}  \label{def:procountable_tdlc} (1) A computable  presentation of a procountable group $G$ is a sequence $(A_{i},\phi_i)_{ i \in \NN}$, of discrete groups $A_i$ and epimorphisms $\phi_i: A_{i} \rightarrow A_{i-1} $ (for $i>0$) such that $G\cong \varprojlim (A_{i},\phi_i)$,    each group $A_i $  is uniformly computable as a discrete group, and the sequence of maps $(\phi_i)_{ i \in \NN^+}$ is uniformly computable. (For notational ease, we also  let $\phi_0$   be the identity on $A_0$.)
 
 \n (2) Suppose that 
  $\mathit  {ker\ } \phi_i$ is finite for each $i$ (so that $G$ is locally compact by Fact~\ref{fact:char tdlc}). We say that   $(A_{i},\phi_i)_{ i \in \NN}$ is a \emph{computable t.d.l.c.\  presentation of $G$} if  in addition,   from $i$ one can compute a strong index for $\mathit  {ker\ } \phi_i$ as a subset of $A_i$. \end{definition}

 \begin{remark} \label{thm:compatible}Note that each computable profinite presentation in the sense of La Roche and Smith is a computable t.d.l.c.\ presentation. Also  letting  all the maps $\phi_i$ be the identity, our definition  subsumes the definition of a computable group. \end{remark}

\begin{remark} Given a computable procountable  presentation of an abelian group $G$, we can always refine the sequence $(A_i)_{i \in \NN}$ so that kernels of the projections $\phi_i: A_i \rightarrow A_{i-1}$ are cyclic subgroups of $A_i$.
Then, for $(Ker \, \phi_i)_{i \in \NN}$ being uniformly computable as finite sets is equivalent  to saying that the generators of these kernels  and their orders  are given uniformly computably.
\end{remark}

 \begin{remark} It is clear that each computably presented procountable~group is computably metrizable (Def.~\ref{def:metrgr}). The converse of this fails even  for profinite groups; this follows from  Cor.~1.6 of   \cite{Pontr}.  \end{remark}
 
\subsection{Discussion}
 The second and third author in forthcoming work~\cite{tdlc-groupoid}  define computability  for a t.d.l.c.\ group $G$ without the restriction to being  abelian. They show that  various  possible  definitions are equivalent. One of them says, roughly speaking, that a (countable) ordered groupoid defined canonically on the compact open cosets of $G$  is computable. Another one asks that a certain computable subgroup of $S_\infty$ satisfying an effective local compactness condition is  isomorphic to $G$. 
    In the abelian case the definition in~\cite{tdlc-groupoid} is equivalent to the one given  here. This evidences that  the definition of  computability  for  t.d.l.c.\ groups used here is not \emph{ad hoc}; rather,  it is equivalent to a  general definition, but  stated here  in  such a way as  to be appropriate in    our technical proofs below.

 Within abelian groups we can provide  further  evidence that our definition of computability for t.d.l.c.\   groups is the natural  one.  We give   a characterisation of this  class based on computable extensions of a profinite group by a discrete group. 
 
 First  we need a  little background; for more detail see e.g.\ \cite{Fu}.
Given abstract groups $A,L$, an \emph{extension} of $A$ by $L$ is an exact sequence 
$0 \to A \to E \to L \to 0$.  (Note that in homological algebra this is usally called  an extension of $L$ by $A$.) The extensions of $A$ by $L$ are given  by equivalence classes of \emph{cocycles}. A cocycle from $L$ to $A$ is a  function $f \colon L \times L \to A$ that is symmetric and satisfies the  condition that $  f(u,v)+ f(u+v, w)=f(v,w) + f(u,v+w)$.  By these conditions,  on $L \times A$, the operation 
\[ (u,a) + (v,b) : = (u+ v, a+ b + f(u,v))\]
defines an abelian group $E$. The inverse of $(u,a)$ is $(-u.-a-f(u,-u))$.  A monomorphism $A \to  E$ is defined by $a \mapsto (0,a)$.  An epimorphism $E \to L$ is defined by $(u,a) \to u$. So we have an exact sequence as above.  We write $E=   \mathit{ext}_f(L ,P)$.
If $A$ is a topological group and $L$ is countable discrete, then $E$ is naturally a topological group with the product topology, and $A$ is open in $E$.

 Given a uniformly computable sequence of groups $(A_i)_{i \in \NN}$, by a computable element of $\prod A_i$ we mean a computable function $f$ such that $f(i)\in A_i$ for each $i$. Let $L$ be a computable discrete group and $P$ a computable profinite group as defined in \ref{def: LaRoche}.  We say that a  $2$-cocycle $c \colon L \times L \to P$ is   computable if $c(x,y)$ is a computable element of $P$ uniformly in $x,y \in L$. In this case, 
we   call   the group $\mathit{ext}_c(P,L)$   a \emph{computable extension} of $P$ by  $L$. 

 \begin{proposition} Let $G$ be a  t.d.l.c.\ abelian group. The following are equivalent.
 
\begin{enumerate} \item[(1)] $G$ has a computable t.d.l.c.\ presentation.
 \item
 [(2)] $G$ is homeomorphic to a computable extension   of a profinite group $P$ by a discrete group $L$. \end{enumerate}

 \end{proposition}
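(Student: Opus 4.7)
My plan is to construct the objects in both directions explicitly, using that a computable discrete group $L$, a computable profinite presentation $(B_i, \psi_i)_{i \in \NN}$, and a computable cocycle can be spliced together levelwise into a computable t.d.l.c.\ presentation, and conversely.

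For $(2) \Rightarrow (1)$, suppose $G = \mathit{ext}_c(P, L)$ with $P = \varprojlim (B_i, \psi_i)$ and canonical projections $p_i \colon P \to B_i$. The pushed-forward cocycle $c_i := p_i \circ c \colon L \times L \to B_i$ is uniformly computable in $i$ because $c$ is, and remains a symmetric $2$-cocycle since $p_i$ is a group homomorphism. Define $A_i := \mathit{ext}_{c_i}(B_i, L)$; the underlying set $L \times B_i$ identifies computably with $\NN$, and each $A_i$ is uniformly a computable discrete group. The maps $\phi_i(u, b) := (u, \psi_i(b))$ are onto homomorphisms with kernel $\{0\} \times \ker \psi_i$, which has a uniformly computable strong index. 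The assignment $(u,p) \mapsto ((u, p_i(p)))_{i \in \NN}$ is the desired topological isomorphism $G \cong \varprojlim(A_i, \phi_i)$.

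For $(1) \Rightarrow (2)$, let $(A_i, \phi_i)_{i \in \NN}$ be a computable t.d.l.c.\ presentation of $G$ and set $N_i := \ker(G \to A_i)$ and $P := N_0$. By the argument in Fact~\ref{fact:char tdlc}, $P$ is a profinite compact open subgroup of $G$ with $P \cong \varprojlim (B_i, \psi_i)$, where $B_i := N_0 / N_i = \ker(\phi_1 \circ \cdots \circ \phi_i) \leq A_i$ and $\psi_i$ is induced by $\phi_i$. I produce a strong index for $B_i$ by induction: $B_0 = \{0\}$, and given a strong index for $B_{i-1}$, one writes $B_i$ as the union of $|B_{i-1}|$ cosets of $\ker \phi_i$, each of known finite size $|\ker \phi_i|$ and each locatable by bounded search in $A_i$. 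Set $L := A_0$. I next build a computable section $s \colon L \to G = \varprojlim A_i$: let $s_0(u) := u$ and inductively let $s_i(u)$ be the first $a \in A_i$ (in a fixed enumeration) with $\phi_i(a) = s_{i-1}(u)$; surjectivity of $\phi_i$ makes this search terminate uniformly in $u, i$. Define $c(u,v) := s(u) + s(v) - s(u+v)$, which lies in $N_0 = P$ since its $A_0$-projection vanishes, and whose $i$-th coordinate $s_i(u) + s_i(v) - s_i(u+v) \in B_i$ is uniformly computable. Commutativity and associativity of $G$ give symmetry and the $2$-cocycle identity. The map $(u, p) \mapsto s(u) + p$ is then a topological isomorphism $\mathit{ext}_c(P, L) \to G$: it is a bijective group homomorphism by the cocycle identity, and its inverse $g \mapsto (\pi(g), g - s(\pi(g)))$ is continuous because $L = A_0$ is discrete.

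The main technical point in both directions is to make choices, preimages, and enumerations uniformly computable in the inverse-limit parameter $i$. This always reduces to bounded search within a computable discrete group $A_i$ in which $\ker \phi_i$ is finite with a uniformly computable strong index, which is exactly the assumption in Def.~\ref{def:procountable_tdlc}(2).
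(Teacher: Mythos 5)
Your proposal is correct and follows essentially the same route as the paper: in one direction you push the cocycle forward along the canonical projections to build levelwise extensions $A_i=\mathit{ext}_{c_i}(L,B_i)$ with $\phi_i(u,a)=(u,\psi_i(a))$, and in the other you take $L=A_0$, $P=\ker(G\to A_0)$ with the inductively computed strong indices for $B_i=N_0/N_i$, and a computable section of $G\to L$ to define the cocycle. The only differences are cosmetic (your explicit check that $(u,p)\mapsto s(u)+p$ is a topological isomorphism is left implicit in the paper, and your phrase ``bounded search'' should really be ``terminating search, since $\phi_i$ is a computable surjection'').
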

        
\begin{proof} 

\n (1)$\to$(2): Let $(A_{i},\phi_i)_{ i \in \NN}$ be a  {computable t.d.l.c.\  presentation of $G$} as in Def.\ \ref{def:procountable}(2). Write $L=A_0$. Let $\alpha$ denote   the canonical projection $ G \to L$. Let $P$ denote  the kernel of~$\aaa$.

We claim that  $P$ has a computable profinite presentation $(B_i, \phi_i')\sN i$.  The groups  $B_i$ and the maps $\phi_i'$ are defined  as in   the proof of the implication ``$\LA$" in Fact~\ref{fact:char tdlc}. We show how to recursively compute  strong indices for the groups $B_i$.  Fix a strong  index for the finite group $B_1$.  If we have a strong index for $B_{i-1}$, since the epimorphism $\phi_i$ is   computable uniformly in $i$,  we can compute a strong index for a finite set $D\sub A_i$ such that $\phi_i(D)= B_{i-1}$. By hypothesis, from $i$ we can compute a strong index for the set  $\mathit {ker} \, \phi_i$. So we can compute a strong index for $D \cdot \mathit {ker} \, \phi_i$, which equals the domain of $B_i$. Since $A_i$ is a uniformly computable group, we can therefore obtain a strong index for the group $B_i$. 

 Since the maps $\phi_i$ are uniformly computable, we   automatically  obtain strong indices for the restricted maps $\phi_i |B_i$. This verifies the claim.

We next define a computable 2-cocycle $ c \colon L \times L \to P$.   Since the maps $\phi_i$ are onto and uniformly computable, given $x\in L$ we can uniformly  determine a computable element $\tau(x) \in G  $ such that  $\aaa(\tau(x))=x$.   Now let $c(x,y)  = \tau(x) + \tau(y)- \tau(x+y)$. Then $c$ is  as required.

\medskip

\n (2)$\to$(1): Let $ (B_i, \phi_i')$ be a computable profinite presentation of $P$. Let $A_0= L$. For each $i$ let $\beta_i \colon P \to B_i$ be the canonical projection. Note that $c_i:= \beta_i \circ c \colon L \times L \to B_i$ is a 2-cocycle. Let $A_i= \mathit{ext}_{c_i}(L,B_i)$ be the corresponding extension. Define maps  $\phi_i \colon A_i \to A_{i-1}$ by $\phi_i(u, a)= (u, \phi_i'(a))$.  Using that  $\phi_i' \circ c_i = c_{i-1}$, it is easily verified that the   $\phi_i$ are epimorphisms. 

To show that  $G\cong \varprojlim (A_{i},\phi_i)$, note that  our concrete  definition of inverse limits given at the beginning of this section,  and our concrete  definition of extensions, $H_0=\mathit{ext}_c(L,P)$ is a closed subgroup of $L \times \prod_i B_i$, and  $H_1=\varprojlim_i (\mathit{ext}_{c_i}(L, B_i), \phi_i)$ is the  closed subgroup of $\prod_i (L \times B_i)$ consisting of those $g$ such that the first component $g(i)_0 \in L$ does not change with $i$, and for the second components we have  $\phi_i'(g(i)_1)= g(i-1)_1$ for each $i>0$. Define a topological isomorphism $\Phi \colon H_0 \cong  H_1$ by letting $\phi(u,f)$ be the function $g$ such that $g(i)_0=u$ and $g(i)_1 = f(i)$ for each $i$.

  Note that (viewing $B_i$ as a subgroup of $A_i$) one has  $\mathit{ker}(\phi_i)= \mathit{ker}(\phi_i')$, so $\mathit{ker}(\phi_i)$ is finite and given by a strong index uniformly in $i$. Thus $(A_{i},\phi_i)$ is a computable t.d.l.c.\ presentation of~$G$. 
\end{proof}

\section{Pontryagin - van Kampen duality}\label{sec:pontrvk}
All topological groups in this section will  be  abelian, separable, and Hausdorff. As usual, we use additive notation for abelian groups and view them as $\mathbb{Z}$-modules.
Recall that the Pontryagin - van Kampen dual (or Pontryagin dual, or just dual for short) $\widehat{G}$ of a locally compact abelian group $G$ is the collection of all topological group homomorphisms from $G$ to the unit circle group $T$ endowed with the topology of uniform convergence (the compact-open topology). Pontryagin - van Kampen  duality states that, for a locally compact $G$, applying the dualization process twice yields a group topologically isomorphic to $G$.  Pontryagin~\cite{Pont} proved the duality for the important  special cases of compact and discrete abelian groups. Thereafter,    van Kampen~\cite{Kampen} in a 3-page paper showed how   the result can be extended to   arbitrary locally compact abelian groups.

\subsection{Compact and discrete torsion cases} 

It is not hard to see that $G$ is compact iff $\widehat{G}$ is discrete and 
$G$ is profinite iff  $\widehat{G}$ is discrete torsion; see \cite{Morris} for details.  More generally, for a Polish t.d.l.c.~abelian $G$,  its dual 
is a topological  extension of a separable compact group $N$ by a countable torsion discrete group $X$.  That is, the abelian group $\widehat G$ has an open compact subgroup $N$ such that $\widehat G/N \cong X$.  It follows that $\widehat G$ is equal to the union of its compact subgroups and it does not have to be t.d.l.c.~in general.
Recall that the strongest notion of computability known beyond the class of t.d.l.c.~groups is computable metrizability (Def.~\ref{def:metrgr}). 
Recall also that the class of t.d.l.c.~groups includes discrete and profinite groups.

\begin{proposition}
Suppose $G$ is a Polish abelian t.d.l.c.~group which is either compact or discrete torsion. Then $G$ has computable t.d.l.c.\ presentation if, and only if, $\widehat{G}$ has  computable t.d.l.c.\ presentation.
\end{proposition}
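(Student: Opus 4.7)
The plan is to reduce both halves of the statement to a single symmetric claim: a profinite abelian group $P$ admits a computable profinite presentation (in the sense of Definition \ref{def: LaRoche}) if and only if its discrete torsion dual $\widehat{P}$ admits a computable discrete presentation. This reduction suffices because a compact Polish abelian t.d.l.c.~group is automatically profinite, its dual is discrete torsion, and conversely the dual of a discrete torsion group is profinite; combining this with $\widehat{\widehat{G}}\cong G$ transports one case to the other. Note also that for discrete groups, the computable t.d.l.c.~notion of Definition~\ref{def:procountable_tdlc} reduces to the classical notion of a computable discrete group by taking each $\phi_i$ to be the identity, so no mismatch of notions is hidden here.

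For the direction ``$P$ profinite computable $\Rightarrow$ $\widehat P$ discrete computable'', I would start with a computable profinite presentation $P\cong\varprojlim(A_i,\phi_i)$ with each finite $A_i$ given by a strong index. From a strong index for $A_i$ one uniformly computes a strong index for the finite character group $\widehat{A_i}=\operatorname{Hom}(A_i,\mathbb{Q}/\mathbb{Z})$ by enumerating candidate maps $A_i\to \tfrac{1}{|A_i|}\mathbb{Z}/\mathbb{Z}$ and testing the homomorphism identity. The dual maps $\phi_i^\ast:\widehat{A_{i-1}}\hookrightarrow\widehat{A_i}$ given by precomposition are then computable uniformly in $i$. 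Since Pontryagin duality swaps inverse and direct limits, $\widehat P\cong \varinjlim(\widehat{A_i},\phi_i^\ast)$, and a computable discrete presentation of this directed union of finite groups follows by the usual identification of elements with their images in some $\widehat{A_i}$.

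For the converse, let $X$ be a computable discrete torsion abelian group. Because $X$ is torsion, the order of any $x\in X$ can be found by unbounded search, so the subgroup $X_n$ generated by the first $n$ elements in an enumeration of $X$ is finite with a strong index uniformly computable in $n$, and $X=\bigcup_n X_n$. Applying the finite-dimensional duality just described gives uniformly strong indices for $\widehat{X_n}$, and the restriction maps $\pi_n:\widehat{X_n}\twoheadrightarrow\widehat{X_{n-1}}$ are surjective since $\mathbb{Q}/\mathbb{Z}$ is divisible (equivalently, injective as a $\mathbb{Z}$-module), so every character of $X_{n-1}$ extends to $X_n$. The kernel of $\pi_n$ is a subgroup of the finite group $\widehat{X_n}$ and a strong index for it is recovered by finite search. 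Hence $(\widehat{X_n},\pi_n)_{n\in\mathbb N}$ is a computable profinite, and in particular a computable t.d.l.c., presentation of $\widehat X\cong\varprojlim(\widehat{X_n},\pi_n)$.

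The only subtle point is the lack of an a priori computable bound on the orders of elements of $X$ in the converse direction; but torsionness guarantees termination of the search for orders, so the construction of strong indices for the $X_n$ is effective. All remaining steps are finite-dimensional bookkeeping with finite abelian group duality, which is uniform in strong indices, so this proposition is essentially a formalization of Smith's original observation from \cite{Smith1}.
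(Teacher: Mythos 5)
Your proposal is correct and follows essentially the same route as the paper: the paper reduces the statement to exactly the same equivalence between computable profinite presentations of a profinite group and computable discrete presentations of its torsion dual, and then simply cites Theorem 1.9 of \cite{Pontr} (going back to Smith \cite{Smith1}) together with Remark~\ref{thm:compatible}, whereas you reprove that equivalence directly via finite-group duality, divisibility of $\mathbb{Q}/\mathbb{Z}$, and the exchange of direct and inverse limits under dualization. Your added details are sound, and the one point you gloss over --- passing from an arbitrary computable t.d.l.c.\ presentation of a compact (resp.\ discrete torsion) group back to a computable profinite (resp.\ discrete) presentation, which holds non-uniformly because the groups $A_i$ in such a presentation are then finite (resp.\ eventually isomorphic to the group itself) --- is left implicit at the same level in the paper's own two-line proof.
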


\begin{proof}
 A profinite group $G$ has a computable profinite presentation iff its  dual $\widehat{G}$, which is a discrete torsion group,  has a computable discrete presentation; see Theorem 1.9 of~\cite{Pontr}. This suffices by Remark~\ref{thm:compatible}.%It remains to apply Theorem~\ref{thm:compatible}.
\end{proof}

 For any computable discrete $A$, its dual $\widehat{A}$ is computably metrizable~\cite{Pontr}. We first  explain why the dual of a discrete group  can be viewed as a closed subgroup of $ \mathbb{A} =\mathbb{T}^{\NN}$, where $\mathbb{T}$ is the unit circle group.

\subsubsection{The group $ Hom(G, \mathbb{T})$.}
Let  
$\mathbb{T}$ be the group $\mathbb{R} / \mathbb{Z}$, which is isomorphic as topological group to the multiplicative group of complex numbers having norm 1. 
 We say that a point $x \in \mathbb{T}$ is \emph{rational} if the respective point of the unit interval is a rational number. Then $\mathbb{T}$ equipped with rational points is a computable Polish group. The direct product 
$$\mathbb{A} = \prod_{i \in \mathbb{N}} \mathbb{T}_i, $$
of infinitely many identical copies $\mathbb{T}_i$ of $\mathbb{T}$
carries the natural product-metric 
 $$D(\chi,\rho) = \sum^{\infty}_{i =0} \dfrac{1}{2^{-i-1}} d_i(\chi_i,\rho_i),$$
 where each of the $d_i$ stands for the shortest arc metric on $\mathbb{T}_i$. Under this metric and the component-wise operation $\mathbb{A}$ is a computably metrized Polish abelian group.  The dense sets are given by sequences $(a_i)$, where $a_i$ is a rational point in $\mathbb{T}_i$, and almost all $a_i$ are equal to zero. The basic open sets in $\prod_{i \in \mathbb{N}} \mathbb{T}_i $ are  direct products of intervals with rational end-points such that a.e.~interval in the product is equal to the respective $\mathbb{T}_i$.
 Clearly, we can effectively list all such open sets. (The exact choice of this basic system of balls is not crucial, but it will be convenient  to assume that the end-points of the intervals are rational.) Every compact abelian group can be realised as a closed subgroup of $\mathbb{A}$,  as explained below.

Suppose $G = \{g_0=0, g_1, g_2, \ldots\}$ is a countably infinite discrete group. Let $\rm  Hom(G, \mathbb{T})$ be the subset of 
$\mathbb{A} = \prod_{i \in \mathbb{N}} \mathbb{T}_i, $
(each $\mathbb{T}_i$ is a copy of $\mathbb{T}$)  consisting of tuples $\chi = (\chi_0, \chi_1, \ldots),$ where each such tuple  represents a group-homomorphism $\chi: G \rightarrow \mathbb{T}$ such that $\chi(g_i) = \chi_i \in \mathbb{T}_i$. 
Since $G$ is  discrete, every group homomorphism $\chi: G \rightarrow \mathbb{T}$ is necessarily continuous. Thus, $\widehat{G} \cong \rm  Hom(G, \mathbb{T})$. Since being a group-homomorphism is a universal property, $\rm Hom(G, \mathbb{T})$ is a closed subspace of $\mathbb{A}$.  Pontryagin duality implies that every separable compact abelian group is homeomorphic to a closed subgroup of $\mathbb{A}$.

In our later proofs we will need a  more  detailed understanding of the dual of a discrete $G$ within $\mathbb{A} = \prod_{i \in \mathbb{N}} \mathbb{T}_i$.  
We  can ``build'' a closed subgroup of $\mathbb{A}$ homeomorphic to $\widehat{G}$; this is explained in detail below.

\subsection{Constructing $\widehat G$ within $\mathbb{A}$} \label{ex:simple}\rm 
In this subsection we give a $0'$-computable procedure which, given a computable discrete $G$, builds a closed subgroup of $\mathbb{A}$ isomorphic to $\widehat G$. We outline the main idea, then we explain why this ``naive'' construction is not computable in general. Then we explain how to modify the construction to make it computable.  

Fix some enumeration of all elements of $G$; to be consistent with the notation in the previous subsection, suppose $G = \{g_0=0, g_1, g_2, \ldots\}$.
  Recall, $\mathbb{A} = \prod_{i \in \mathbb{N}} \mathbb{T}_i $
where each $\mathbb{T}_i$ is a copy of $\mathbb{T}$. Let $\pi_i$ be the projection of $\rm  Hom(G, \mathbb{T})$ onto $\mathbb{T}_i$.

\subsubsection{Performing a few steps.} We describe an idea behind the construction. We do this by performing a few steps and  blend these steps with informal explanation.

\smallskip

 \emph{At stage 0}, set $\pi_0(\rm  Hom(G, \mathbb{T})) = 0$; clearly,   $\chi(g_0) = \chi(0) =  0$ for any character $\chi$ of $G$.

\emph{At stage $1$}, we consider the next element $g_1$.
Suppose, for example, $ \langle g_1 \rangle \cong \mathbb{Z}$, and therefore its dual $\widehat{ \langle g_1 \rangle } $ is homeomorphic to $\mathbb{T}$.
 We then declare $\pi_1(\rm  Hom(G, \mathbb{T})) = \mathbb{T}_1$ and go to the next stage.
 
 \emph{At stage $2$}, consider $g_2 $ and suppose, say, $2 g_2  =g_1$.
 Then the order of $g_2$ is infinite, and thus $\widehat{\langle g_3 \rangle} \cong \mathbb{T}$.
For any $x \in \mathbb{T}$,  $\chi(g_3) = x$ implies $\chi(g_1) = 2x$. We then declare $\pi_2(\rm  Hom(G, \mathbb{T})) = \mathbb{T}_2$,
 but we also require $2\chi_3 = \chi_2$  for each character $\chi = (\chi_0, \chi_1, \ldots)$ of $G$.
 
 \smallskip
 
We can proceed in this manner to define the closed set representing $\widehat{G}$ within $\mathbb{A}$.

\subsubsection{The naive construction}\label{const:naive}\mbox{} 

\smallskip

 \emph{At stage 0}, set $\pi_0(\rm  Hom(G, \mathbb{T})) = 0$.

\emph{At stage s,} if the order of $g_s$ is infinite then declare $\pi_s(\rm  Hom(G, \mathbb{T})) = \mathbb{T}_s$, and if it is finite and $\langle g_s \rangle \cong \mathbb{Z}_k$, 
then declare $\pi_s(\rm  Hom(G, \mathbb{T})) = \langle\dfrac{1}{k} \rangle \leq  \mathbb{T}_s$. 
Check if there exist  integers $n$, $m_0, \ldots, m_{s-1}$ such that $$n g_s = \sum_{i <s}m_i g_i,$$
where $n \neq 0$, and if at least one of the $m_i \neq 0$ then $GCD(n, m_0, \ldots, m_{s-1})=1$.
If such a relation holds for $g_0, \ldots, g_{s}$, then also require that
$$n \chi_s = \sum_{i <s}m_i \chi_i,$$
for any sequence $(\chi_0,\chi_1, \ldots)$ in the closed set that we construct. 

\

The naive construction described above builds a closed subset of $\mathbb{A}$. Together with the computable operations inherited from $\mathbb{A}$ it forms a closed  subgroup of $\mathbb{A}$ topologically isomorphic to $\widehat G$.

\subsubsection{The main issue with the naive construction.}
It is not difficult to show that searching for a linear combination of the form $n g_s = \sum_{i <s}m_i g_i$ in a computable discrete $G$
requires $0'$ in general.
Proposition 3.3 of \cite{Pontr} shows that this difficulty cannot be circumvented in the following sense. The closed subgroup $Hom(G, \mathbb{T}) \leq \mathbb{A}$  does not necessarily have a dense  sequence of points uniformly computable with respect to $\mathbb{A}$.
This is the only difficulty, because  if $Hom(G, \mathbb{T})$ had a dense computable subset of points, then we could use the metric of $\mathbb{A}$ and its computable operations to computably metrize $Hom(G, \mathbb{T})$.

\subsubsection{Fixing the issue.}\label{const:naive1} The idea is to replace $G$ with a computable $H \cong G$ such that $Hom(H, \mathbb{T})$ does contain a computable dense subset.
As was verified in \cite{Pontr}, for the naive construction to work it is sufficient that $H$ has a computable maximal linearly independent set over integers\footnote{Recall that
elements $g_1, \ldots, g_k$ of an additive abelian group $G$ are \emph{linearly independent} (over $\mathbb{Z}$) if, for any choice of $n_1, \ldots,n_k \in \mathbb{Z} $, $n_1g_1 + \ldots +n_k g_k = 0 $ 
implies $n_i =0$ for all $i$. Note that torsion elements are ``dependent on themselves". All maximal linearly independent subsets of $G$ have the same cardinality which is called the  (Pr\"ufer or torsion-free) rank of $A$.}.
 
In the 1980s, building on the earlier work of Nurtazin~\cite{nurt}, Dobritsa~\cite{feebleDobrica}  proved:
\begin{theorem}[Dobritsa]\label{thm:Dobritsathm}
 Every computable discrete abelian group has a computable presentation with a computable maximal linearly independent set.
\end{theorem}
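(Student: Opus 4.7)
The plan is to construct a computable discrete abelian group $H \cong G$ together with a computable injection $n \mapsto b_n$ whose image is a maximal $\mathbb{Z}$-linearly independent subset of $H$. The construction proceeds in stages using a back-and-forth enumeration of $G$, combined with a priority mechanism that controls which elements of $H$ are declared to lie in the independent set.

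First I would enumerate the given presentation as $G = \{g_0, g_1, \ldots\}$. At each stage $s$ we maintain a finitely generated subgroup $H_s$ of $H$, a finite partial isomorphism $f_s$ from a finite subset of $G$ into $H_s$, and a tuple $(b_0^s, \ldots, b_{r_s}^s)$ of elements of $H_s$ that is our current guess at an initial segment of the final basis. We represent $H_s$ by a presentation matrix kept in Smith normal form, which lets us read off the torsion-free rank and exhibit the current independent tuple effectively. A stage performs one back step (placing the next element already in $H$ into the range of $f_s$) and one forth step (placing $g_s$ into the domain of $f_{s+1}$, introducing a fresh generator of $H$ if necessary); a freshly introduced generator is provisionally appended to the basis list.

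The main obstacle, and the heart of the argument, is stabilization of the basis. A provisionally independent $b_k^s$ can be invalidated later when a relation $n g_s = \sum_{i<s} m_i g_i$ eventually appears in the enumeration of $G$ and, pulled back to $H$, contradicts earlier guesses. To control this, I would attach a priority ordering to the entries: the candidate $b_k$ is allowed to change only in response to a newly discovered relation among basis candidates of index at most $k$, and every fresh generator receives the currently lowest priority. Because any dependence discoverable among $b_0^s, \ldots, b_k^s$ must already occur in a finitely generated subgroup, and because the torsion-free rank is nondecreasing but bounded by the number of generators seen, one proves by induction on $k$ that $b_k := \lim_s b_k^s$ exists. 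The Smith-normal-form bookkeeping then gives $b_k$ uniformly computably in $k$.

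Maximality of $\{b_k\}$ follows from the back-and-forth structure: every $g \in G$ eventually enters $\dom(f_s)$ and so $f_s(g)\in H_t$ for some $t$, and the Smith form either records a $\mathbb{Z}$-linear dependence of $f(g)$ on the chosen basis elements (which persists by stabilization) or promotes $f(g)$ itself to a new basis element. The hard part of the proof will be verifying that the priority mechanism really makes the changes to each $b_k^s$ finite, that the group operation on $H$ inherited from the direct limit of the $H_s$ remains computable, and that no eventual basis element becomes torsion. This is where Nurtazin's technique of updating presentation matrices in Smith normal form is essential, and Dobritsa's contribution is to integrate it with the back-and-forth construction in order to extract the desired computable maximal independent set in the limit.
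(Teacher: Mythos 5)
There is a genuine gap, and it sits exactly at the sentence ``the Smith-normal-form bookkeeping then gives $b_k$ uniformly computably in $k$.'' In your construction the partial isomorphism $f_s\colon G\to H$ is never revised: the back step only adds preimages and the forth step only adds images, so in the limit $f$ is a \emph{computable} isomorphism with computable inverse, and what changes from stage to stage is only the designated tuple $(b^s_0,\dots,b^s_{r_s})$. Your priority/induction argument can indeed show that each $b_k=\lim_s b^s_k$ exists, but that only makes the designated set limit computable, i.e.\ $\Delta^0_2$: nothing in the Smith-normal-form bookkeeping tells you \emph{computably} when $b^s_k$ has changed for the last time, since linear dependence of the current candidates (equivalently, of their $f$-preimages in $G$) is merely $\Sigma^0_1$ and a dependence with huge coefficients may surface arbitrarily late. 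In fact the approach cannot be repaired while keeping $f$ fixed: if $H$ had a computable maximal linearly independent set $C$, then $f^{-1}(C)$ would be a computable maximal linearly independent set in the original copy $G$, and it is classical (going back to Mal\cprime cev~\cite{Mal}) that computable presentations, e.g.\ of $\bigoplus_\omega\mathbb{Q}$, exist with no computable basis. So a construction in which the isomorphism stabilizes elementwise without revision proves something false.

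The missing idea is to invert the roles of the basis and the isomorphism, which is precisely Nurtazin's device as used in this paper (the theorem is obtained here as the one-group special case of Proposition~\ref{uber-Dobritsa}). One fixes, \emph{in advance and once and for all}, a computable set $C=\{c_0,c_1,\dots\}$ of elements of the new copy $B$ that is declared independent, and one builds a $\Delta^0_2$ isomorphism $\theta\colon B\to A$ whose values are revised finitely often: when the current images $\theta(c_j)$ are discovered to be dependent in $A$, one redefines $\theta(c_r)$ as $\theta(c_r)+t!\,d_r$ for a fresh candidate $d_r$ and $t$ larger than all data mentioned so far. The factorial is what makes the revision consistent with the atomic facts already committed in $B$: any relation $mx=\sum_j m_j c_j + d$ already present in $B$ forces a divisibility constraint on the new image of $x$, and $m\mid t!$ guarantees the required element of $A$ exists; one must then separately verify that no revision destroys injectivity (this is the nontrivial Claim~\ref{claim:injective}) and that least-index search makes the revisions stabilize. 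Your proposal contains neither the fixed-in-advance computable $C$ nor any mechanism for reconciling a revised isomorphism with the group facts already decided in $H$; ``updating presentation matrices in Smith normal form'' is not Nurtazin's technique and does not substitute for it.
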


Using Dobritsa's theorem, we can replace $G$ with a computable $H$ that admits a computable maximal linearly independent set;  in $H$ the conditions required to run the naive construction are computable. This gives a computably metrized presentation of $\widehat H \cong \widehat G$.

\section{Determining the dual}

\subsection{Proof of Theorem~\ref{theo:pont}}

\subsubsection{Discussion and intuition}\label{intumetric}  Recall that Theorem~\ref{theo:pont} states that the Pontryagin dual $\widehat{G}$ of a computable abelian t.d.l.c.~group $G$ 
 is computably metrizable. Fix  a computable t.d.l.c.\  presentation  $(A_i, \phi_i)$, where each $A_i$ is discrete and each $Ker \, \phi_i$ is finite and given by its strong index as in Def.\ \ref{def:procountable}(2).  Since the kernel of each projection is finite, the index of $\widehat{A}_{i+1}$ in $\widehat{A}_i$ will also be finite. 
The dual of $G$ will be the direct limit of $\widehat{A}_i$.

As discussed in the previous section, each $\widehat{A}_i$ can be computably metrized. Nonetheless, to build the computably metrized copy of $\widehat{A}_i$, we need to pass to a new computable discrete presentation $B_i$ of $A_i$ that has a computable maximal linearly independent set. 
The isomorphism between $A_i$ and $B_i$ does not have to be computable in general.
Consequently, it will no longer be clear how $B_{i+1}$ projects onto $B_i$, and thus 
we will have difficulties in defining a computable metric on the direct limit of $\widehat B_i$.

It seems that the only reasonable solution to this problem is to build all these $B_i$ simultaneously  and additionally maintain the projections between them.
Once the result of Dobritsa is extended to computable procountable abelian groups in this sense in Proposition~\ref{uber-Dobritsa}, the rest of the proof becomes somewhat routine. We will use the specific properties of the computably metrized $\widehat{B}_i$ as described in the previous section to put them together in a coherent way.

  In the subsection below we prove Theorem~\ref{theo:pont} assuming  Proposition~\ref{uber-Dobritsa}. We will prove Proposition~\ref{uber-Dobritsa} in Section~\ref{sec:dob}, which requires a considerable effort.

\subsubsection{Formal proof of Theorem~\ref{theo:pont}}
% By hypothesis, $G$ is equal to the inverse limit of uniformly computable discrete $A_i$ under surjective uniformly computable projections $\phi_i$ with finite kernels having uniformly computable strong indices.
%You just said this!!
\begin{proposition}\label{uber-Dobritsa} Suppose $(A_{i},\phi_i)$ is a computable t.d.l.c.\  presentation of an abelian group $G$ as in Def.\ \ref{def:procountable}(2). Then $G$ has  a computable t.d.l.c.\  presentation
 $ (B_{i},\psi_i)$  such that each $B_i$ has a computable maximal linearly independent set, uniformly in $i$. 
 \end{proposition}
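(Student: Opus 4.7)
The plan is to extend Dobritsa's original argument (Theorem~\ref{thm:Dobritsathm}) from a single computable abelian group to the whole inverse system coherently. Two structural observations will guide the construction. First, since each kernel $\mathit{ker}\,\phi_i$ is finite (hence torsion), the torsion-free rank of $A_i$ equals that of $A_{i-1}$ for every $i \geq 1$. Second, if $\{t_1,\dots,t_r\} \subseteq A_{i-1}$ is linearly independent and $\tilde t_k \in A_i$ is any preimage of $t_k$ under $\phi_i$, then $\{\tilde t_1, \dots, \tilde t_r\}$ is linearly independent in $A_i$, and by the rank equality it is maximal whenever the original set is. Conversely, if $t \in A_{i-1}$ turns out to be dependent on previously confirmed MLI elements, then any preimage $\tilde t \in A_i$ is also dependent on their preimages, because the witnessing difference lies in $\mathit{ker}\,\phi_i$ and is therefore killed by some positive integer. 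Thus dependencies discovered at lower levels automatically propagate upward.

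I would then construct the sequence $(B_i,\psi_i)_{i \in \NN}$ simultaneously in stages. At stage $s$ we maintain, for every $i \leq s$, a partial bijection $\alpha_i^{[s]}$ from a finite subset of $A_i$ onto a finite subset of $B_i$, together with a partial group table on $B_i$ transported from $A_i$ through $\alpha_i^{[s]}$. When a new element $a \in A_i$ is enumerated into $B_i$, we compute $\phi_i(a) \in A_{i-1}$, look up (or if necessary extend) its image in $B_{i-1}$ via $\alpha_{i-1}^{[s]}$, and declare $\psi_i$ at the new point accordingly. This forces each $\psi_i$ to be computable by construction, and strong indices for $\mathit{ker}\,\psi_i$ are inherited from those of $\mathit{ker}\,\phi_i$. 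Simultaneously, at each level $i$ we run Dobritsa's guess-and-correct strategy to enumerate a candidate MLI set $T_i \subseteq B_i$, and whenever we add a new element to $T_i$ we immediately produce coherent lifts into $T_{i+1}, T_{i+2}, \dots$ by successive one-step preimage searches under the already-constructed $\psi_{i+1}, \psi_{i+2}, \dots$; such preimages exist and are computable because each $\psi_j$ is a computable surjection.

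The main obstacle will be organising the construction so as to handle discovered dependencies uniformly across infinitely many levels. In the single-group case, Dobritsa's trick when a presumed MLI element is unmasked as dependent is to retract the guess and re-use the positions in $\NN$ allocated for its scheduled image. In our setting the same move at level $i$ must be mirrored at every higher level, since the lifts produced there are now also dependent by the propagation observation above. I expect the right framework to be a stage-by-stage bookkeeping in which, at each stage $s$, only finitely many pairs $(i,j)$ with $i \leq j \leq s$ require attention, and in which strong indices for the finite kernels $\mathit{ker}\,\phi_i$ let us enumerate and compare all relevant preimages within finitely many steps. Verifying that this procedure yields a uniformly computable sequence $(B_i,\psi_i)_{i\in\NN}$ with uniformly computable MLI sets $(T_i)_{i\in\NN}$, and that $\varprojlim(B_i,\psi_i) \cong G$, is the technical heart of the proof and justifies the dedicated section announced in the statement.
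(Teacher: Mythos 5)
Your overall framing --- build the $B_i$ and $\psi_i$ simultaneously and use the finiteness of $\ker\phi_i$ to transfer (in)dependence between levels --- matches the paper's starting point (its Claim~\ref{easy:claim}), but the sketch stops exactly where the proof has to start, and two of the steps you do commit to would fail as stated. First, your description of the single-group repair (``retract the guess and re-use the positions in $\NN$ allocated for its scheduled image'') is not a workable account of the Dobritsa/Nurtazin strategy in this setting: the sets $T_i$ must be \emph{computable} and the atomic diagrams of the $B_i$ are enumerated irrevocably, so once an element is placed in $T_i$ and relations involving it are committed, nothing can be retracted. The repair must happen on the $A_i$-side, by redefining the $\Delta^0_2$ maps $B_i\to A_i$ on the offending elements, and the new targets must have the form (old target)$\,+\,t!\,d$ with $t$ large and $d$ fresh, precisely so that every linear/divisibility relation already committed in $B_i$ remains true of the new images and injectivity is preserved; your proposal contains no mechanism of this kind, and this is where the technical core lies (the paper's conditions (a.1)--(a.4), the $2(t+1)!$-independence bookkeeping, and its injectivity claim, Claim~\ref{claim:injective}). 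Second, nothing in your construction guarantees that the limit sets are \emph{maximal}: in the one-group case maximality comes from always choosing the least-indexed candidate that still looks independent, and in the inverse-system case one must decide at which level to impose this minimality. The paper imposes it in $A_0$ (its condition (a.4)), which, combined with the finite-kernel transfer of Claim~\ref{easy:claim}, is exactly what forces every element of every $A_i$ eventually into the span of the limiting independent set.

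Third, your assertion that ``this forces each $\psi_i$ to be computable by construction'' is unjustified. The partial bijections $\alpha_i$ must be corrected again and again as dependencies are unmasked, and $\psi_i=\alpha_{i-1}\circ\phi_i\circ\alpha_i^{-1}$ stays stable only if the corrections at levels $i-1$ and $i$ are synchronized; the paper achieves this by choosing the fresh correction elements coherently across levels ($d_{i-1,j}=\phi_i(d_{i,j})$) and by arranging that torsion elements --- in particular all kernel elements --- never have their images moved, which is also what makes the strong indices of $\ker\psi_i$ computable. Your plan of lifting newly guessed independent elements upward by taking preimages under the already-built $\psi_{i+1},\psi_{i+2},\dots$ provides no such synchronization, so a later correction at level $i$ can silently change values of $\psi_{i+1}$ that were already announced. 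Until these three mechanisms (relation-preserving repair of the maps, a minimality condition at level $0$ securing maximality, and correction synchronization keeping the $\psi_i$ and their kernels computable) are built in and verified, the proposal remains a plan rather than a proof.
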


We postpone the technical proof of the proposition to Section \ref{sec:dob}. Assuming the proposition, we prove the theorem.
By the proposition, we can assume that each $A_i$ has a uniformly computable maximal linearly independent set.
As we explained in Subsection~\ref{ex:simple},  the duals of $\widehat{A_i}$  have uniformly computable metrizations; this is Theorem 1.4 of \cite{Pontr}.
As explained in \S\ref{const:naive1}, the procedure  described in \S\ref{const:naive} becomes 
computable if the group has a computable maximal linearly independent set.

Furthermore, it follows from the description of the construction in  \S\ref{const:naive} that, within each version $\mathbb{T}_j$ of the unit circle in $\mathbb{A} = \prod_i \mathbb{T}_i$, we can 
use only rational points when we define a projection of $A_i$ to $\mathbb{T}_j$. The image of this projection will be equal to the completion of this set of rational points.
More specifically, if we decide that $\pi_j(A_i) \cong \mathbb{Z}_m$ then we add the rational points $\dfrac{k}{m}$, $k \leq m$, all at once.
Otherwise, if we decide that $\pi_j(A_i) =   \mathbb{T}_j $, then we initate the enumeration of all rational points in $\mathbb{T}_j $.

Recall that the kernel of each $\phi_i: A_i \rightarrow A_{i-1}$ in the computable procountable presentation is given by its strong index. We can uniformly rearrange the enumeration of $A_i$
to make sure that the elements of the kernel are listed first, and then the rest of the elements of $A_i$ listed. 

It follows from Subsection~\ref{ex:simple}  that, after this re-enumeration, 
the dual of  $A_i$ can be realised as a computable closed subgroup of 
 $\prod_{i } \mathbb{T}_i$, where the first $k$ projections $\pi_i (i \leq k)$ correspond to the elements of the kernel.
 Note that each of these $k$ projections is finite. In particular, the collection of sequences of the form
 $$(0, \ldots, 0, \chi_{k+1}, \chi_{k+2}, \ldots)$$
 will be a computably metrized  presentation of $A_{i-1}$.
Similarly,  we take the finite kernel of the composition $\tilde{\phi}_i$ of $\phi_j$, $i<j $, then the dual of $A_i$
can be represented as a computable closed subgroup of 
 $\prod_{i } \mathbb{T}_i$, where the first $m_i$ projections $\pi_i (i \leq m_i)$ correspond to the elements of $Ker \, \tilde{\phi}_i$,
 and the sequences with the first $m_i$ coordinates equal to $0$ represent a  computably metrized  presentation of $A_{0}$.
Furthermore, it follows from \S\ref{const:naive} that each $\widehat{A_{i}}$ will be a disjoint union of finitely many cosets of $\widehat{A_{0}}$, and \emph{uniformly computably} so.

 The computably metrized groups $\widehat{A_i}$ can be viewed as nested under inclusion. 
Modify the metric on this computably metrized presentation of $\widehat{A_i}$ to make it more suitable for metrizing the direct limit of all $\widehat{A_i}$.
Declare the distance between any two points  coming from different cosets of
$\widehat{A_0}$ equal to $2$, and use the product metric of $\mathbb{T}^{\NN}$ (which is always $\leq 1$) to compare elements in each individual coset mod $\widehat{A_0}$. This is a computable metric which is furthermore effectively compatible with the original metric on $ \widehat{A_i}$. 
Each $\widehat{A_{i}}$ is a disjoint union of finitely many cosets of $\widehat{A_{0}}$, in a tractable way (see \S\ref{const:naive}).
It follows that  the operations on $\widehat{G}$ are effectively open and effectively continuous, since for  any input the group operations can be computed within $\widehat A_i$ 
for a sufficiently large $i$. This finishes the proof of Theorem~\ref{theo:pont}.

\subsection{Proof of Theorem~\ref{theo:dual}} Recall that, by assumption, $G$ and $\widehat{G}$ are t.d.l.c.~abelian.  Since $\widehat{\widehat{G}} \cong G$, it is sufficient to prove that computability of $G$ implies computability of $\widehat G$ in the sense of Definition~\ref{def:procountable}(2). By hypothesis $G$ admits a computable procountable presentation $(A_i, \phi_i)$ in which every $A_i$ is torsion discrete.
Each $A_i$ has a maximal computable linearly independent set, namely the empty set, and therefore Proposition~\ref{uber-Dobritsa} is vacuously true for these $A_i$.
It remains to note that the computable metric on each totally disconnected compact
 $\widehat{A_i}$ produced  in the proof of Theorem~\ref{theo:pont} is effectively compatible with the standard ultra-metric on the respective compact subset of $\NN^\NN$; see also \S\ref{const:naive} and \S\ref{const:naive1}.
 Furthermore, since there are only finitely many points in each $\mathbb{T}_i$ and they are enumerated instantly (i.e., as a strong index), it follows that we can produce a computable profinite presentation of  $\widehat{A_i}$, uniformly in $i$.
% , which can be viewed as a computable t.d.l.c.~presentation by Theorem~\ref{thm:compatible}.
 Following the same argument as in the proof of Theorem~\ref{theo:pont},  computably metrize the direct limit of $\widehat{A_i}$. Since each individual compact $\widehat{A_i}$ is computable t.d.l.c., the resulting computable ultra-metric makes $\widehat{G}$ computable t.d.l.c.
 
 \begin{remark}\label{rem:unif}
The correspondence given by Theorem~\ref{theo:dual} is uniform, since the only non-uniform step in Theorem~\ref{theo:pont} is  checking if the rank of $A_0$ is finite or infinite; this will be explained in the proof of Proposition~\ref{uber-Dobritsa}. 
\end{remark}

\section{The connected case: proof of Theorem~\ref{pont:con}}

\subsection{Effective compactness of the dual}

\noindent $(1) \rightarrow (2)$: Recall from Section~\ref{sec:pontrvk} that $\widehat{G}$ can be represented as a solenoid-type closed subgroup $H$ of
$$\prod_n \mathbb{T}_n,$$
where each $\mathbb{T}_n$ is the standard computable presentation of the unit circle group. It is sufficient to show that this presentation $H$    is effectively compact.
It consists of sequences $(a_n)$ such that, for some of the $n$'s, 
we additionally require that $q_n a_{n} = \sum_{s<n} m_{n,s} a_{s}$, where $q_n$ is a prime and $m_{n,s}$ are integers which depend on $n$.
Furthermore, we can decide for which $n$ such a restriction occurs, and in this case we also can compute the respective $q_n$ and $m_{n,s}$. 
Recall that in  Subsection~\ref{const:naive1} we also explained that for this to work, we picked a computable presentation of $G$ with a computable maximal linearly independent set. Note that the map $x \rightarrow q_n x$ is surjective and computably maps basic intervals to basic intervals in a strong sense (i.e., maps names to names), and the same can be said about the standard operations of addition and multiplication by an integer scalar  (in $\mathbb{T}$). In fact, there  exist arbitrarily small intervals $I_s \ni a_s$ for which we have 
$q_n I_{n} = \sum_{s<n} m_{n,s} I_{s}$.

Given a finite potential cover $C_0, \ldots, C_k$ by basic open balls of $H$, uniformly compute $i$ so large that the $i'$-th projection of each of the $C_j$ covers all of the $T_{i'}$, $i'>i$.
If $(C_i)$ is indeed a cover, then every $\xi \in H$  is contained in some $C_j$ together with a neighbourhood
of the form 
$$I_0 \times I_1 \times \ldots \times I_i \times \prod_{j>i} \mathbb{T}_j,$$
where:
\begin{enumerate}
\item[i.] each $I_n$ is a basic interval with rational end-points;
\item[ii.] each $I_n$ is formally contained in the $n$-th projection of $C_i$:
$$d(\mathrm{cntr}(I_n), \mathrm{cntr}(C_i)) + r(I_n) < r(C_i),$$
where $\mathrm{cntr}(I)$ denotes the center of an interval $I$;

\item[iii.] if $q_n$ is defined, then $q_n I_n =  \sum_{s<n} m_{n,s} I_{s}$.
\end{enumerate}
Note that these conditions  are $\Sigma^0_1$. Also, by compactness, there must exist finitely many neighbourhoods of this form that cover the whole group. We argue that such finite families can be computably enumerated.

Suppose we are given a family  of intervals $(I_n^k)$, $k \leq m$, such that for every fixed $k$ the intervals $I_n^k$ satisfy the properties (i)-(iii)~above (with index $k$ suppressed).
If $q_n$ is defined and $\{\prod_{s<n} I^k_{s} : k\leq m \}$, is a cover of the projection of the group on $\prod_{s<n} \mathbb{T}_n$, then $\{ \prod_{s\leq n} I^k_{s}: k \leq m \}$, is a cover of the projection of the group on  $\prod_{s\leq n} \mathbb{T}_n$; this is because every $\xi$ (viewed as a sequence) that projects into  $\prod_{s<n} I^k_{s}$ will also project into $I^k_n$.
Thus, by induction on $m$, the problem of covering the group by neighbourhoods satisfying (i)-(iii)~is reduced to the problem of covering of the product of those $\mathbb{T}_n$ for which  $q_{n}$ is not defined, as follows.
 Let $J$ be the set of all  indices $n \leq i$ for which $q_{n}$ is not defined.
For a family $$\{ \prod_{n \leq i}I^k_n  \times \prod_{j>i} \mathbb{T}_j : {k \leq m}\} $$ satisfying (iii) to cover the group, it is necessary and sufficient
that $\{ \prod_{n \leq i}I^k_n: k \in J  \}$ is a
cover  of $\prod_{k \in J} \mathbb{T}_k$.
Since the spaces $T_i$ are uniformly effectively compact, such neighbourhoods can be effectively enumerated. 
Thus, if $C_0, \ldots, C_k$ is indeed a cover, then we will eventually see it.

\

\noindent $(2) \rightarrow (1)$:     First, we will informally explain the main idea behind the proof. Then we
give some background from algebraic topology. After that we introduce and verify a new method of  calculating the $\rm\check{C}$ech cohomology groups. Then we check that it can be used to produce a computable presentation of the discrete torsion-free dual.

\subsection{Informal explanation of the proof} Suppose we are given a computably metrized presentation of the compact connected abelian $\widehat{G}$; we can assume it is effectively compact.  Recall that $\widehat{\widehat{G}} \cong G$ is the collection of all continuous homomorphisms
 from $\widehat{G}$ to the unit circle $\mathbb{T}$ under the topology of uniform convergence. 
Although  $\widehat{G}$ is topologically isomorphic to a closed subgroup of the computable group $\mathbb{T}^{\NN}$, we do not necessarily have access to an isomorphism showing this. So our job is to list all of the countably many continuous homomorphisms $\chi_i: \widehat{G} \rightarrow \mathbb{T}$ without repetition  in such a way that the operation of addition is computable:
$$\chi_i + \chi_j = \chi_{f(i,j)},$$
where $f$ is a computable function. Naively, one could try to approximate the $\Pi^0_1$ class of all elements in $\widehat{\widehat{G}}$ by means of finite partial maps from $\widehat{G}$ to $\mathbb{T}$. Since  the space of all continuous maps from $\widehat{G}$ to $\mathbb{T}$ is  not compact,   the best we can hope using this brute-force approach   is to reduce the problem to  the problem of uniformly computing isolated members of a $\Pi^0_1$ class in the Baire space. By a uniform version of $\Sigma^1_1$-bounding (see \cite{SacksHigherBook}, Thm.~6.2.III),  this will give a $\Delta^1_1$ presentation of $\widehat{\widehat{G}} \cong G$. 

We see that the brute-force approach gives an upper bound on the complexity of $G$ which is too crude. The idea is to use algebraic topology (which is ``algorithmic'' in its nature) to significantly improve this estimate.
A peculiar consequence of the structural theory of compact abelian groups is the following fact:
\emph{for a  compact connected abelian group $G$, its homeomorphism type determines its algebraic homeomorphism type} (see Part 5 of Chapter 8  of \cite{compbook}). 
To prove the theorem, one shows that the first {$\rm\check{C}$ech cohomology group (to be defined) of the underlying space of a given compact connected abelian group is isomorphic to its discrete dual.

To calculate the {$\rm\check{C}$ech cohomology group 
of $\widehat{G}$, we need to list its $2^{-n}$-covers and compute their nerves; recall that the nerve of a cover is the simplicial complex in which the faces are the collections of balls that intersect non-trivially. The covers form an inverse system under refinement maps, and so do the associated nerves under the induced simplicial maps. For each nerve, it makes sense to calculate its cohomology groups. For a fixed $i$, the direct limit of the $i$th cohomology groups under the  homomorphisms induced by the refinement maps on nerves is exactly the $i$th  {$\rm\check{C}$ech cohomology group of $\widehat{G}$, which for $i=1$ turns out to be isomorphic to $G$. 
The  process described above  is readily seen to be arithmetical, which is already a significant improvement over the crude $\Delta^1_1$ bound that we established earlier. It takes quite a bit of work to prove that, indeed,  we can \emph{computably} recover the  {$\rm\check{C}$ech cohomology groups from the given effectively compact presentation of $\widehat{G}$.

There are three main obstacles in effectively computing the  {$\rm\check{C}$ech cohomology groups. First, we need to have access to covers of $\widehat{G}$. This is where effective compactness will be useful. Second, for each cover we will need to calculate its nerve. This is problematic since we need to decide whether balls intersect or not; this is naturally $\Sigma^0_1$. In fact, it seems that in general this difficulty cannot be resolved by means of manipulating with the given presentation. We also suggest that the reader take some time to check that the mere computable enumerability of intersection for basic open balls is insufficient for the cohomology machinery to work computably. This is not immediately obvious since seeing this involves examining multiple layers of definitions; we omit the unpleasant tedious details.
To circumvent this obstacle, we will replace the standard notion of a {$\rm\check{C}$ech nerve with a different
notion of a metric nerve which yields the same cohomology groups. Even though this will not be difficult, it will involve some subtle choice of numerical parameters to check that the new notion works the way we expect.
  Third, even assuming that the first two issues are resolved, the 1st {$\rm\check{C}$ech cohomology group is merely c.e.~presented rather than computable.
 Recall that a group is c.e.~presented if its operation is computable but equality is merely computably enumerable.  Luckily, it was conveniently proven by Khisamiev in the 1980s that every c.e.~presented torsion-free abelian group has a computable presentation which can be produced with sufficient degree of uniformity.
   This uniformity will be important in applications.
   We will state the result below and we will also give a sketch of its proof. 

We are now ready to give the details. 

\subsection{Background from algebraic topology}

The material contained in this  subsection  can be found in~\cite{Munk}. The main point of this section is to set the terminology and notation.

\subsubsection{Simplicial complexes}
We assume that all the simplicial complexes are finite. If $K$ is a simplicial complex, write $H_i(K)$ for its $i$-th homology group (with coefficients in $\mathbb{Z}$), and 
$H^i(K)$ to denote its $i$-th cohomology group. We omit the standard definitions of these groups and refer the reader to, say,~\cite{Munk}
where these definitions are stated and explained in great detail. We also refer the reader to Chapter 3 of \cite{Spanier} and Chapter IV of~\cite{Eilenberg}.

Recall that a simplicial map between simplicial complexes is a function from vertices to vertices that maps simplices to simplices.
Suppose that $K$ and 
$L$ are simplicial complexes and $f,g:K\rightarrow L$ are simplicial maps.
We say that $f,g$ are contiguous if for every simplex $\sigma $ of $K$, $%
f\left( \sigma \right) \cup g\left( \sigma \right) $ is a simplex of $L$.
For $d\in \NN $, two contiguous simplicial maps $K\rightarrow L$ induce
the same homomorphism $H_{d}\left( K\right) \rightarrow H_{d}\left( L\right) 
$ and $H^{d}\left( L\right) \rightarrow H^{d}\left( K\right) $.

 We
say that $f,g$ are contiguous equivalent if there exist $\ell \in \NN $
and $f_{0},\ldots ,f_{\ell }$ such that $f_{0}=f$, $f_{\ell }=g$, and $%
f_{i},f_{i+1}$ are contiguous for $0\leq i<\ell $. This defines an equivalence
relation among simplicial maps. From the perspective of (co)homology, 
simplicial maps belonging to the same contiguous equivalence class are indistinguishable.
It is thus convenient to define
a morphism of simplicial complexes $K\rightarrow L$ to be a contiguous
equivalence class of simplicial maps $K\rightarrow L$.

Then one can consider simplicial complexes as objects of a category with morphisms
defined as above.
 This makes $%
K\mapsto H_{d}\left( K\right) $ is a covariant functor from simplicial
complexes to abelian groups, and $K\mapsto H^{d}\left( K\right) $ is a
contravariant functor from simplicial complexes to abelian groups.

\subsubsection{Towers of simplicial complexes}

We consider finite simplicial complexes. A tower of simplicial complexes is
a sequence $\left( K^{\left( n\right) },p^{\left( n,n+1\right) }\right) $ of
simplicial complexes $K^{\left( n\right) }$ and morphisms $p^{\left(
n,n+1\right) }:K^{\left( n+1\right) }\rightarrow K^{\left( n\right) }$. For $n<m$ let $p^{\left(
n,m\right) }:K^{\left( m\right) }\rightarrow K^{\left( n\right) }$  be the canonical composition of maps.
Suppose that \begin{center}$\boldsymbol{K}=\left( K^{\left( n\right) },p^{\left(
n,n+1\right) }\right) $ and $\boldsymbol{L}=\left( L^{\left( n\right)
},p^{\left( n,n+1\right) }\right) $ \end{center}  are towers of simplicial complexes. A
morphism $\boldsymbol{K}\rightarrow \boldsymbol{L}$ is represented by a
sequence $\left( n_{k},f^{\left( k\right) }\right) _{k\in \NN }$ where $%
\left( n_{k}\right) $ is an increasing sequence in $\NN $ and $f^{\left(
k\right) }:K^{\left( n_{k}\right) }\rightarrow L^{\left( k\right) }$ is a
morphism such that $p^{\left( k,k+1\right) }f^{\left( k+1\right) }=f^{\left(
k\right) }p^{\left( n_{k},n_{k+1}\right) }$ for $k\in \NN $. Two such
sequences $\left( n_{k},f^{\left( k\right) }\right) _{k\in \NN }$ and $%
\left( n_{k}^{\prime },f^{\prime \left( k\right) }\right) _{k\in \NN }$
represent the same morphism if for every $k\in \NN $ there exists $%
m_{k}\geq \max \left\{ n_{k},n_{k}^{\prime }\right\} $ such that $f^{\left(
k\right) }p^{\left( n_{k},m_{k}\right) }=f^{\prime \left( k\right)
}p^{\left( n_{k}^{\prime },m_{k}\right) }$ for every $k\in \NN $. The
identity morphism of $\boldsymbol{K}$ is represented by the sequence $\left(
n_{k},f^{\left( k\right) }\right) $ where $n_{k}=k$ and $f^{\left( k\right)
} $ is the identity of $K^{\left( k\right) }$. The composition of morphisms $%
\left( n_{k},f^{\left( k\right) }\right) $ and $\left( k_{\ell },g^{\left(
\ell \right) }\right) $ is the morphsim $\left( n_{k_{\ell }},g^{\left( \ell
\right) }\circ f^{\left( k_{\ell }\right) }\right) $. This defines a
category with towers of simplicial complexes as objects.

One similarly define the category of towers of abelian groups. More
generally, one can define the category $\mathbf{tow}\left( \mathcal{C}%
\right) $ of towers associated with a category $\mathcal{C}$. The category
of inductive sequences $\mathbf{ind}\left( \mathcal{C}\right) $ in $\mathcal{%
C}$ is defined in the similar way. The objects in $\mathbf{ind}\left( 
\mathcal{C}\right) $ are sequences $\left( A_{\left( n\right) },\eta
_{\left( n+1,n\right) }\right) $ where $A_{\left( n\right) }$ is an object
in $\mathcal{C}$ and $\eta _{\left( n+1,n\right) }:A_{\left( n\right)
}\rightarrow A_{\left( n+1\right) }$ is a morphism in $\mathcal{C}$. One can
succinctly define $\mathbf{ind}\left( \mathcal{C}\right) $ as the opposite
category of $\mathbf{tow}\left( \mathcal{C}^{\mathrm{op}}\right) $, where $%
\mathcal{C}^{\mathrm{op}}$ is the opposite category of $\mathcal{C}$.

Given a tower of simplicial complexes $\boldsymbol{K}=\left( K^{\left(
n\right) }\right) $ and $d\in \NN $ one defines the tower $H_{d}\left( 
\boldsymbol{K}\right) =\left( H_{d}\left( K^{\left( n\right) }\right)
\right) _{n\in \NN }$ of homology groups, where the group homomorphism $%
H_{d}\left( K^{\left( n+1\right) }\right) \rightarrow H_{d}\left( K^{\left(
n\right) }\right) $ is induced by the morphism $K^{\left( n+1\right)
}\rightarrow K^{\left( n\right) }$, and the inductive sequence $H^{d}\left( 
\boldsymbol{K}\right) =\left( H^{d}\left( K^{\left( n\right) }\right)
\right) _{n\in \NN }$ of cohomology group, where the group homomorphism $%
H^{d}\left( K^{\left( n\right) }\right) \rightarrow H^{d}\left( K^{\left(
n+1\right) }\right) $ is induced by the morphism $K^{\left( n+1\right)
}\rightarrow K^{\left( n\right) }$. This defines a covariant functor $%
\boldsymbol{K}\mapsto H_{d}\left( \boldsymbol{K}\right) $ from the category
of towers of simplicial complexes to the categories of towers of groups, and
a contravariant functor $\boldsymbol{K}\mapsto H^{d}\left( \boldsymbol{K}%
\right) $ from the category of towers of simplicial complexes to the
category of inductive sequences of groups.

\subsubsection{$\check{C}$ech nerves}

Suppose that $X$ is a compact metrizable space. We define a tower of
simplicial complexes $\boldsymbol{K}\left( X\right) $ as follows. Fix a
sequence $\left( \mathcal{U}^{\left( n\right) }\right) _{n\in \NN }$ of
finite open covers of $X$ such that, for every $n\in \NN $, $\mathcal{U}%
^{\left( n+1\right) }$ refines $\mathcal{U}^{\left( n\right) }$, and for
every finite open cover $\mathcal{W}$ of $X$ there exists $n\in \NN $
such that $\mathcal{U}^{\left( n\right) }$ refines $\mathcal{W}$. For every $%
n\in \NN $, fix a refinement map $p^{\left( n,n+1\right) }:\mathcal{U}%
^{\left( n+1\right) }\rightarrow \mathcal{U}^{\left( n\right) }$. Let $%
K^{\left( n\right) }$ be the $\rm\check{C}$ech nerve $N\left( \mathcal{U}^{\left( n\right)
}\right) $ of the cover $\mathcal{U}^{\left( n\right) }$. Then $p^{\left(
n,n+1\right) }$ is a simplicial map $K^{\left( n+1\right) }\rightarrow
K^{\left( n\right) }$. Furthermore, any two refinement maps $\mathcal{U}%
^{\left( n+1\right) }\rightarrow \mathcal{U}^{\left( n\right) }$ induce 
\emph{contiguous }simplicial maps $K^{\left( n+1\right) }\rightarrow
K^{\left( n\right) }$. This defines a tower of simplicial complexes $%
\boldsymbol{K}\left( X\right) =\left( K^{\left( n\right) }\right) _{n\in
\NN }$. For $d\in \NN $, one obtains an inductive sequence of abelian
groups $H_{d}\left( \boldsymbol{K}\left( X\right) \right) =\left(
H_{d}\left( K^{\left( n\right) }\right) \right) _{n\in \NN }$. This
defines a functor $X\mapsto H_{d}\left( \boldsymbol{K}\left( X\right)
\right) $ from compact metrizable spaces to inductive sequences of abelian
groups. A different choice of covers yields an isomorphic functor.

\subsection{Metric nerves}\label{subs:metr}
In this subsection we define a new notion of a metric nerve and verify that 
metric nerves can be used in place of $\rm\check{C}$ech nerves in all our applications. 
The new notion we define in this subsection was inspired by \cite{vit}.

Fix decreasing vanishing sequences $\left( \varepsilon _{n}\right) $ and $%
\left( \delta _{n}\right) $ in $\mathbb{R}_{+}$ such that $2\varepsilon
_{n}+\delta _{n+1}\leq \delta _{n}$ for every $n\in \NN $.
Suppose that $X$ is a compact metrizable space. A subset $A\subseteq X$ is $%
\varepsilon $-dense if for every $x\in X$ there exists $a\in A$ such that $%
d\left( x,a\right) <\varepsilon $. Fix a sequence $\left( A^{\left( n\right)
}\right) $ of finite subsets of $X$ such that, for every $n\in \NN $, $%
A^{\left( n\right) }$ is $\varepsilon _{n}$-dense. Define $N\left( A^{\left(
n\right) }\right) $ to be the simplicial complex with set of vertices $%
A^{\left( n\right) }$, where one lets $\sigma $ be a simplex in $N\left(
A^{\left( n\right) }\right) $ if and only if $\sigma $ has diameter less
than $\delta _{n}$, i.e. $d\left( a,b\right) <\delta _{n}$ for every $a,b\in
\sigma $. For $n\in \NN $, fix a function $p^{\left( n,n+1\right)
}:A^{\left( n+1\right) }\rightarrow A^{\left( n\right) }$ such that $d\left(
p^{\left( n,n+1\right) }\left( a\right) ,a\right) <\varepsilon _{n}$ for
every $n\in \NN $. The assumption that $2\varepsilon _{n}+\delta
_{n+1}\leq \delta _{n}$ guarantees that $p^{\left( n,n+1\right) }$ is a
simplicial map $N\left( A^{\left( n+1\right) }\right) \rightarrow N\left(
A^{\left( n\right) }\right) $. A different choice of function $p^{\left(
n,n+1\right) }$ as above yields a contiguous simplicial map $N\left(
A^{\left( n+1\right) }\right) \rightarrow N\left( A^{\left( n\right)
}\right) $. Define the tower of simplicial complexes $\boldsymbol{L}\left(
X\right) =\left( N\left( A^{\left( n\right) }\right) \right) _{n\in \NN }$%
.

Suppose that $X,Y$ are compact metrizable spaces, and $\varphi :X\rightarrow
Y$ is a continuous function.\ Suppose that $\left( A^{\left( n\right)
}\right) $ and $\left( B^{\left( n\right) }\right) $ are sequences of finite
subsets of $X,Y$, respectively, obtained as above. Let $\left( n_{k}\right)
_{k\in \NN }$ be an increasing sequence in $\NN $ such that, for $%
a,b\in X$, $d\left( a,b\right) <\delta _{n_{k}}$ implies that $d\left(
f\left( a\right) ,f\left( b\right) \right) <\delta _{k+1}$. For $k\in \NN 
$ define a simplicial map $A^{\left( n_{k}\right) }\rightarrow B^{\left(
k\right) }$, $a\mapsto f^{\left( k\right) }\left( a\right) $ such that $%
d\left( f^{\left( k\right) }\left( a\right) ,a\right) <\varepsilon _{k}$ for
every $k\in \NN $. Notice that this is indeed a simplicial map, and a
different choice would yield a contiguous simplicial map. The sequence $%
\left( n_{k},f^{\left( k\right) }\right) $ represents a morphism $%
\boldsymbol{L}\left( X\right) \rightarrow \boldsymbol{L}\left( Y\right) $.
The assignment $X\mapsto \boldsymbol{L}\left( X\right) $ defines a functor
from compact metrizable spaces to simplicial complexes. A different choice
of finite sets $\left( A^{\left( n\right) }\right) $ of $X$ and of vanishing
sequences $\left( \varepsilon _{n}\right) $ and $\left( \delta _{n}\right) $
in $\left( 0,1\right) $ such that $2\varepsilon _{n}+\delta _{n+1}\leq
\delta _{n}$ for $n\in \NN $ would yield an isomorphic functor.

\subsubsection{Comparing $\boldsymbol{K}\left( X\right) $ and $\boldsymbol{L}%
\left( X\right) $}

We want to show that the functors $X\mapsto \boldsymbol{K}\left( X\right) $
and $X\mapsto \boldsymbol{L}\left( X\right) $ are isomorphic. Fix vanishing
sequences $\left( \varepsilon _{n}\right) $, $\left( \delta _{n}\right) $, $%
\left( r_{n}\right) $ satisfying the following for every $n\in \NN $:

\begin{itemize}
\item $2\varepsilon _{n}+\delta _{n+1}\leq \delta _{n}$;

\item $2\varepsilon _{n}+r_{n+1}\leq r_{n}$;

\item $\varepsilon _{n}\leq r_{n}$;

\item $2r_{n}\leq \delta _{n}$;

\item $2\varepsilon _{n}+\delta _{n+1}\leq r_{n}$.
\end{itemize}

For example, one can set $\varepsilon _{n}=2^{-8n}$ and $\delta
_{n}=2^{-4\left( n-1\right) }$ and $r_{n}=2^{-4n+1}$ (for large enough $n$).
Fix a compact metrizable space $X$, and let $\left( A^{\left( n\right)
}\right) $ be the corresponding sequence of finite subsets such that, for
every $n\in \NN $, $A^{\left( n\right) }$ is $\varepsilon _{n}$-dense.
Let $\boldsymbol{L}\left( X\right) =\left( N\left( A^{\left( n\right)
}\right) \right) $ be defined as above with respect to $\left( A^{\left(
n\right) }\right) $ and the sequence $\left( \delta _{n}\right) $. We can
assume that $\boldsymbol{K}\left( X\right) =\left( N\left( \mathcal{U}%
^{\left( n\right) }\right) \right) $ is defined in reference to the sequence
of covers $\mathcal{U}^{\left( n\right) }$, where $\mathcal{U}^{\left(
n\right) }$ is the cover of open balls of radius $r_{n}$ with center in $%
A^{\left( n\right) }$. Furthermore, we can assume that the refinement map $%
p^{\left( n,n+1\right) }:\mathcal{U}^{\left( n+1\right) }\rightarrow 
\mathcal{U}^{\left( n\right) }$ is the same as the bonding map $p^{\left(
n,n+1\right) }:N\left( A^{\left( n+1\right) }\right) \rightarrow N\left(
A^{\left( n\right) }\right) $, defined by assigning to $a\in A^{\left(
n+1\right) }$ and element $p^{\left( n,n+1\right) }\left( a\right) \in
A^{\left( n\right) }$ such that $d\left( a,p^{\left( n,n+1\right) }\left(
a\right) \right) <\varepsilon _{n}$. 

The identity map of $A^{\left( n\right) }$ defines a simplicial map $\varphi
_{n}:N\left( \mathcal{U}^{\left( n\right) }\right) \rightarrow N\left(
A^{\left( n\right) }\right) $, as $2\cdot r_{n}\leq \delta _{n}$. Thus, $%
\left( \varphi _{n}\right) $ defines a morphism $\varphi :\boldsymbol{K}%
\left( X\right) \rightarrow \boldsymbol{L}\left( X\right) $. We claim that
this is an isomorphism. Indeed, for every $n\in \NN $, $p^{\left(
n,n+1\right) }$ is a simplicial map $N\left( A^{\left( n+1\right) }\right)
\rightarrow N\left( \mathcal{U}^{\left( n\right) }\right) $. Indeed, if $%
\sigma $ is a simplex in $N\left( A^{\left( n,n+1\right) }\right) $, then we
claim that $\sigma $ is contained in the ball of center $p^{\left(
n,n+1\right) }\left( a\right) $ and radius $r_{n}$ for every $a\in \sigma $.
Indeed, if $a,a^{\prime }\in \sigma $ then we have that $d\left( a,a^{\prime
}\right) <\delta _{n+1}$, $d\left( p^{\left( n,n+1\right) }\left( a\right)
,a\right) <\varepsilon _{n}$ and $d\left( p^{\left( n,n+1\right) }\left(
a^{\prime }\right) ,a^{\prime }\right) <\varepsilon _{n}$ and hence $$d\left(
a^{\prime },p^{\left( n,n+1\right) }\left( a\right) \right) <2\varepsilon
_{n}+\delta _{n+1}\leq r_{n}.$$ Thus, the sequence $\left( n+1,p^{\left(
n,n+1\right) }\right) $ defines a morphism $\psi :\boldsymbol{L}\left(
X\right) \rightarrow \boldsymbol{K}\left( X\right) $ that is the inverse of $%
\varphi $. This shows that $\varphi :\boldsymbol{K}\left( X\right)
\rightarrow \boldsymbol{L}\left( X\right) $ is an isomorphism, which can be
easily seen to be natural.

\subsubsection{Computability of $\boldsymbol{L}(X)$}

Given an effectively compact $X$, define  $A_n \subseteq X$, $p^{(n, n+1)}$, $\delta_n, \varepsilon_n \in \mathbb{Q}$ by recursion as follows.
For $n=0$, set $\varepsilon _{0} = k> diam (X) $ and $ \delta _{0} = 8k$, and $A_0 = \{x\}$, where $x$ is any (first found) special point of $X$.
Suppose $A_n$, $\delta_n$, $\varepsilon_n$ have already been defined and assume $4\varepsilon _{n}  < \delta _{n}$. 
 Search for the first found finite $A_{n+1} \subseteq X$ of special points and the first found positive rationals  $\delta_{n+1}$ and $\varepsilon_{n+1}$, and a map $p^{(n, n+1)}: A_{n+1} \rightarrow A_n$  that satisfy:

\begin{itemize}
\item[i.] $2\varepsilon _{n}+\delta _{n+1}< \delta _{n}$ and $\delta_{n+1}< \delta_{n}/2$;
\item[ii.] $4\varepsilon _{n+1}  < \delta _{n+1}$;
\item[iii.] $A_{n+1}$ is an $\varepsilon_{n+1}$-dense in $X$;
\item[iv.] for every pair $a, a' \in A_{n+1}$, either $d(a, a') < \delta_{n+1}$ or $d(a, a') > \delta_{n+1}$.

\item[v.]  For each, $a\in A_{n+1} $ set $p^{(n, n+1)}(a)$ equal to the first found $b \in A_n$  such that $d\left(
b ,a\right) <\varepsilon _{n}$.

\end{itemize}

Note that  condition (iii)~is equivalent to saying that the $\varepsilon_{n+1}$-balls centered in elements of $A_{n+1}$ cover $X$; this is a $\Sigma^0_1$~because the space is effectively compact.
Conditions (i), (ii.), and (iv)~are $\Sigma^0_1$ by definition. To see why such $A_n$, $\delta_n$, $\epsilon_n$ can be found, fix any set of  special points $A_{n+1}$ and rationals $\varepsilon_{n+1}$
and $\delta'_{n+1}$  as follows. Since $4\varepsilon _{n}  < \delta _{n}$ by our hypothesis, so we can choose any $\delta'_{n+1}< min \{ 2\varepsilon_{n}, \delta_{n}/2\}$ to satisfy (i)~ and then fix any $\varepsilon_{n+1}< \delta_{n+1}/8$ to meet (ii.)~strongly (in the sense that any $\delta_{n+1} \in (\delta'_{n+1}/2, \delta'_{n+1} ) $ satisfies condition ii.~for the same $\varepsilon_{n+1}$).
Choose some $$\delta_{n+1} \in  (\delta'_{n+1}/2, \delta'_{n+1} ) \setminus \{d(a, a'): a, a' \in A_{n+1}\}$$ so that this finite set of potential equalities is avoided; this gives condition iv.~while maintaining condition (ii.). It follows that such $A_n$, $\delta_n$, $\varepsilon_n$ exist and, thus, we will eventually find (perhaps, some other) such $A_n$, $\delta_n$, and $\varepsilon_n$.
Similarly, the definition of  $p^{(n, n+1)}$ is computable too since for every $a \in A_{n+1}$ there is at least one $b \in A_{n}$ which satisfies $d\left(
b ,a\right) <\varepsilon _{n}$.

Note that the conditions (i)~and (ii)~imply that both sequences $\delta_n$ and $\varepsilon_n$ are vanishing, and thus the analysis contained in the previous paragraphs implies that we will obtain a uniformly computable tower $\boldsymbol{L}(X)$ and the associated maps $p^{(n, n+1)}$ which, furthermore,  will be uniformly given by their strong indices. Condition (iv)~guarantees that the strong indices of the finite simplices in the tower are uniformly computable.

\subsection{Calculating the cohomology group}

Given a compact $M$, let $\mathcal{N}$ be the directed set of all its finite open  covers under the refinement relation.
For each member $C$ of $\mathcal{N}$, define its ($\rm\check{C}$ech)  nerve $N(C)$ and the cohomology groups $H^*(N(C))$ (with coefficients in $\mathbb{Z}$). Let $H^*(M)$ be the direct limit of $H^*(N(C))$
induced by the inverse system $\mathcal{N}$.
Remarkably, for a compact connected  abelian group $G$, $$H^1(G) \cong \widehat{G},$$ where as usual $\widehat{G}$ denotes the  Pontryagin dual of $G$ (which is torsion-free discrete); see, e.g.,  Part 5 of Chapter 8  of \cite{compbook}, and see also \cite{cohobook} for a detailed exposition of cohomology theory for compact abelian groups. Note that we did not use the operation of $G$ to define $H^1(G)$; therefore homeomorphic connected compact abelian groups are necessarily   (topologically) isomorphic as groups.

In the previous subsection we verified that instead of $\rm\check{C}$ech nerves, one can use metric nerves to define $H^1(G)$.
This is because, in the notation of the previous section, $X\mapsto \boldsymbol{K}\left( X\right) $
and $X\mapsto \boldsymbol{L}\left( X\right) $ are isomorphic, and thus the respective towers of abelian groups are also isomorphic and give isomorphic limits.
 We have also verified that, in contrast with  $\rm\check{C}$ech nerves, metric nerves can be effectively produced under the assumption that the underlying space is effectively compact. Our next goal is to verify that this indeed implies that $H^1(G)$ is computably presented.

Produce a computable inverse system $\tilde{\mathcal{N}}$ of metric nerves. 
 For a fixed finite set of points $C \in \tilde{\mathcal{N}}$ and the respective metric simplex $\tilde{N}(C)$,
define the simplicial chain complex  as usual: $$ \ldots \rightarrow_{\delta_3} A_2 \rightarrow_{\delta_2} A_1\rightarrow_{\delta_1} A_0$$
where $A_i$ are finitely generated free abelian groups and $\delta_i$ are boundary homomorphisms, and then define the associated cochain complex $A^i = Hom (A_i, \mathbb{Z})$
and define $d_i : A^{i} \rightarrow A^{i-1}$ to be the dual homomorphism of $\delta_{i+1}$.
 Then $H^i(\tilde{N}(C)) = Ker(d_{i} ) / Im (d_{i-1})$
is the $i$th cohomology group of the simplex $\tilde{N}(C)$ which is a finitely generated abelian group which can be thought of as given by finitely many generators and relations.
By the analysis from the previous subsection, the direct limit of the $H^i(\tilde{N}(C))$ is isomorphic to then i-th $\rm\check{C}$ech cohomology group $H^i(G)$ of $G$. We are interested mainly in the case when $i=1$.

A sequence of finitely generated uniformly computable 
 abelian groups $(B_i)$ is \emph{strongly completely decomposable} if
each $B_i$ uniformly splits into a direct sum of its cyclic subgroups, and furthermore the sets of  generators of the cyclic summands are given by their strong indices.
We will need  the lemma below which  is well-known; see~ \cite{Fu} for a proof.
 
 \begin{lemma} Let $G \leq F$ be free abelian groups. There exist generating sets $g_1, \ldots, g_k$ and $f_1, \ldots f_m$ ($k \leq m$) of $G$ and $F$, respectively, and integers $n_1, \ldots, n_k$ such that for each $i \leq k$, we have $g_i = n_i f_i$.
 \end{lemma}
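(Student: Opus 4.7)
The plan is to prove the classical \emph{stacked basis theorem} by induction on the rank of $F$, using the minimality of a linear functional evaluated on $G$. Since this is a standard structure theorem for finitely generated free abelian groups, the main work is setting up the right inductive step; no new ideas are needed.

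First I would handle the trivial case $G=0$ (take $k=0$ and any basis of $F$). Otherwise, consider the collection of values $\{\phi(G) : \phi \in \mathrm{Hom}(F,\ZZ)\}$. Each $\phi(G)$ is a subgroup of $\ZZ$, hence of the form $n_\phi \ZZ$ for some $n_\phi\ge 0$. Choose $\phi_0$ so that $n_1 := n_{\phi_0}>0$ is minimal among the positive $n_\phi$ (this minimum exists because at least one $\phi$ gives a nonzero subgroup, as $G\ne 0$ and $F$ is free). Pick $g_1\in G$ with $\phi_0(g_1)=n_1$.

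The key lemma to establish is that $g_1 = n_1 f_1$ for some $f_1 \in F$, and moreover $\phi_0(f_1)=1$. For this, write $g_1 = \sum_j a_j e_j$ in some fixed basis $(e_j)$ of $F$. Using the coordinate functionals $e_j^*$, the minimality of $n_1$ forces $n_1 \mid a_j$ for each $j$: indeed, for any $\phi$, $\phi(g_1)\in n_1\ZZ$, since otherwise one could produce a linear functional (e.g.\ by combining $\phi$ and $\phi_0$ via the Euclidean algorithm on $n_1$ and $\phi(g_1)$) witnessing a smaller positive value on $G$. Applying this with $\phi = e_j^*$ gives $n_1 \mid a_j$, so we may set $f_1 := \sum_j (a_j/n_1) e_j$, yielding $g_1 = n_1 f_1$. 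Since $\phi_0(g_1)=n_1$, we get $\phi_0(f_1)=1$.

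From $\phi_0(f_1)=1$ one obtains the splittings $F = \ZZ f_1 \oplus \ker \phi_0$ and $G = \ZZ g_1 \oplus (G \cap \ker \phi_0)$: for $x\in F$ write $x = \phi_0(x) f_1 + (x-\phi_0(x)f_1)$, and similarly for $y\in G$, noting $\phi_0(y)\in n_1\ZZ$ so $\phi_0(y)f_1 = (\phi_0(y)/n_1)g_1 \in \ZZ g_1$. Now $\ker\phi_0$ is a free abelian group of rank $m-1$, and $G\cap \ker\phi_0$ is a subgroup, so by induction there exist bases $g_2,\ldots,g_k$ of $G\cap\ker\phi_0$ and $f_2,\ldots,f_m$ of $\ker\phi_0$ with $g_i = n_i f_i$ for $2\le i\le k$. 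Combining these with $g_1,f_1$ yields the desired stacked bases.

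The only subtle point, and the step I would spell out most carefully, is the divisibility argument showing $n_1$ divides all coordinates of $g_1$; this is exactly where the minimal choice of $\phi_0$ is used, and without it the induction does not go through. Everything else is routine bookkeeping with direct sum decompositions.
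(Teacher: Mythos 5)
Your proof is correct: it is the standard stacked-basis argument (minimize a functional's positive value on $G$, split off $\ZZ f_1$ and $\ker\phi_0$, induct on the rank of $F$), and the divisibility step you highlight is handled properly. The paper does not prove this lemma at all — it simply cites Fuchs \cite{Fu}, where essentially this same argument appears — so there is nothing further to compare.
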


 \begin{claim}\label{claim:4} The groups 
 $H^i(\tilde{N}(C))$ are strongly completely decomposable (uniformly in $C$ and $i$).
 \end{claim}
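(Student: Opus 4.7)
The plan is to reduce the claim to a routine uniformly computable integer linear algebra computation starting from the strong indices of $\tilde N(C)$ produced in the previous subsection. Recall that from a computable enumeration of the finite set $C$ of special points and the computable rationals $\delta_n,\varepsilon_n$, we can uniformly decide, for each tuple from $C$, whether it forms a simplex of $\tilde N(C)$, because condition (iv) of the construction made the inequality ``$d(a,a')<\delta_n$'' decidable on $C\times C$. Hence from $C$ and the stage $n$ we obtain strong indices for the set of $k$-simplices of $\tilde N(C)$ for every $k$, and in particular for the finite free abelian groups $A_k$ of simplicial $k$-chains together with their canonical bases.

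First I would compute the boundary maps $\delta_k\colon A_k\to A_{k-1}$ as integer matrices in the canonical bases; each entry is $\pm 1$ or $0$ and is obtained directly from the ordered simplex data. Dualizing gives the cochain complex $(A^k,d_k)$ with $d_k\colon A^k\to A^{k+1}$ represented by the transpose matrices in the dual bases, which are again strongly given. Thus we obtain, uniformly in $C$ and $i$, strong indices for finitely generated free abelian groups $A^{i-1},A^i,A^{i+1}$ together with the homomorphisms $d_{i-1},d_i$ between them.

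Next I would run the Smith normal form algorithm on $d_i$ to obtain a basis of $A^i$ in which $\ker(d_i)$ is spanned by a subset of the basis vectors; this gives a strong index for a free basis $e_1,\ldots,e_s$ of $Z^i:=\ker(d_i)\leq A^i$. Similarly, compute the matrix of $d_{i-1}$ in the standard basis of $A^{i-1}$ and in the basis $e_1,\ldots,e_s$ of $Z^i$ (using that its image lies in $Z^i$), and reduce this matrix to Smith normal form. By the lemma cited just before the claim, this produces bases $g_1,\ldots,g_k$ of $B^i:=\mathrm{im}(d_{i-1})$ and $f_1,\ldots,f_s$ of $Z^i$, together with integers $n_1,\ldots,n_k\geq 0$ with $g_j=n_jf_j$ for $j\leq k$. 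Hence
\[
H^i(\tilde N(C))\;=\;Z^i/B^i\;\cong\;\bigoplus_{j=1}^{k}\mathbb{Z}/n_j\mathbb{Z}\;\oplus\;\bigoplus_{j=k+1}^{s}\mathbb{Z},
\]
and a strong index for the list of cyclic summands together with their generators (the images of $f_1,\ldots,f_s$) is produced uniformly from $C$ and $i$. Since Smith normal form is a uniformly computable procedure on integer matrices given by strong indices, the whole process is uniform in $(C,i)$, which is precisely the definition of strong complete decomposability.

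The only conceptual point to check is the uniformity of the input simplicial data, and that is already guaranteed by the construction of $\boldsymbol{L}(X)$ in the previous subsection. The rest is standard effective integer linear algebra, so no real obstacle remains.
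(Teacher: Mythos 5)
Your proposal is correct and takes essentially the same approach as the paper: read off strong indices for the (co)chain complex of the metric nerve (condition (iv) making the simplex relation decidable), and then produce generating sets of $\ker(d_i)$ and $\mathrm{im}(d_{i-1})$ aligned as in the lemma preceding the claim, so that the quotient visibly splits into cyclic summands with computably given generators. The only difference is one of implementation: you obtain the aligned bases explicitly via the Smith normal form algorithm, whereas the paper simply searches for the first generating sets of the required form, whose existence is guaranteed by that lemma.
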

 \begin{proof}
A close examination of the definitions shows that, given $C$ (as a finite set of parameters) and $i$, we can compute the generators of $A^i = Hom (A_i, \mathbb{Z})$ and compute $d_i$.

 We can computably find the set of generators $(a_j)$ of $Ker(d_{i} )$ and a set of generators $(b_s)$ of $Im (d_{i-1})$ such that for each $s$ there is an integer $m$ and an index $i$ such that $ma_i = b_s$; we know that such generators exist so we just search for the first found ones. It follows that the factor  $H^i(\tilde{N}(C)) = Ker(d_{i} ) / Im (d_{i-1})$ is strongly completely decomposable with all possible uniformity.
 \end{proof}

 Recall that a group admits a c.e.~presentation if it is isomorphic to a factor of a computable (free) group by a computably enumerable subgroup. It is equivalent to generalising the definition of a computable discrete presentation by requiring the equality to be merely c.e. (while the operation is still computable).
 
 \begin{claim} The direct limit
 $\lim_{C \in \tilde{\mathcal{N}}} H^i(\tilde{N}(C))$ admits a c.e.~presentation.
 \end{claim}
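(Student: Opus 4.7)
The plan is to realise the direct limit as a quotient of a uniformly computable free abelian group by a c.e.\ subgroup of relations. Since the functors $X \mapsto \boldsymbol{K}(X)$ and $X \mapsto \boldsymbol{L}(X)$ were shown to be isomorphic in Subsection~\ref{subs:metr}, the direct limit $\lim_{C \in \tilde{\mathcal N}} H^i(\tilde N(C))$ is isomorphic to the direct limit $\lim_n H^i(\tilde N(A^{(n)}))$ along the linear tower $\boldsymbol{L}(X) = (\tilde N(A^{(n)}))_{n\in\NN}$ built effectively in the previous subsection. So it suffices to exhibit the latter as a c.e.\ presented group.

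First I would check that the cohomology homomorphisms $(p^{(n,n+1)})^{*}\colon H^i(\tilde N(A^{(n)})) \to H^i(\tilde N(A^{(n+1)}))$ induced by the bonding simplicial maps are uniformly computable. The strong indices of the complexes $\tilde N(A^{(n)})$ and the simplicial maps $p^{(n,n+1)}$ were produced uniformly in the construction of $\boldsymbol{L}(X)$; passing through free chain complexes, taking $\mathbb{Z}$-duals, and forming $\mathrm{Ker}(d_i)/\mathrm{Im}(d_{i-1})$ are each finitary, uniformly computable operations. Combined with the strongly completely decomposable presentations provided by Claim~\ref{claim:4}, this yields uniformly computable group homomorphisms on cohomology.

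Now I would assemble the c.e.\ presentation. Let $F$ be the free abelian group with a uniformly computable set of generators $\{g_{n,x} : n \in \NN,\ x \text{ a generator of } H^i(\tilde N(A^{(n)}))\}$; this is a computable free abelian group. Let $R \leq F$ be generated by (i) the intrinsic relations of each $H^i(\tilde N(A^{(n)}))$ (decidable uniformly by Claim~\ref{claim:4}), and (ii) the compatibility relations $g_{n,x} - g_{n+1,\,(p^{(n,n+1)})^{*}(x)}$ coming from the bonding maps. Both families are uniformly computable, so $R$ is c.e. The quotient $F/R$ is, by the universal property of direct limits, isomorphic to $\lim_n H^i(\tilde N(A^{(n)}))$. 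Equivalently, on representatives $(n,x)$ the addition is computed by first applying the induced maps to move both summands into a common stage $H^i(\tilde N(A^{(k)}))$ with $k = \max(n,m)$ and then adding there, while equality in the limit is a $\Sigma^0_1$ condition: $(n,x)\sim(m,y)$ iff for some $k \geq \max(n,m)$ the images coincide in the decidable equality of $H^i(\tilde N(A^{(k)}))$.

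The only genuine subtlety is the uniform computability of the cohomology functor on the tower, but this reduces to bookkeeping with strong indices for the finite simplices together with the decomposition data from Claim~\ref{claim:4}; no new algorithmic idea is required. Crucially, no decidable equality is claimed for the resulting presentation, which is why we obtain only a c.e.\ presentation—this is the expected output at this stage, and it is precisely the hypothesis under which Khisamiev's theorem will convert the first cohomology group into a genuinely computable discrete torsion-free abelian group in the subsequent step of the argument.
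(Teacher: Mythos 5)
Your proposal is correct, and in substance it follows the paper's route: effectivize the computable tower of metric nerves, check that the induced homomorphisms on cohomology are uniformly computable using the decomposability data of Claim~\ref{claim:4}, and observe that equality in the colimit is only $\Sigma^0_1$ because an element may be killed at an arbitrarily late stage. Where you diverge is in how the colimit is packaged. The paper stays inside the groups $H^i(\tilde{N}(C))$ themselves: it runs the stacked-bases argument once more on each bonding map $\phi$, so that $\mathrm{Im}\,\phi$ becomes a computable subgroup of $H^i(\tilde{N}(C'))$ which can then be computably augmented by new generators, and it realizes the limit as an increasing ``union'' of the stages with computable operations and c.e.\ equality. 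You instead use the other (equivalent) formulation of c.e.\ presentability recalled just before the claim, writing the limit as $F/R$ with $F$ a computable free abelian group on stage-indexed generators and $R$ the c.e.\ subgroup generated by the intrinsic and compatibility relations; this yields the conclusion directly from the definition and lets you bypass the paper's computable-image step, at the cost of the (routine) verification via the universal property that $F/R$ is indeed the direct limit. Two small points to tidy: the compatibility relator should be $g_{n,x}$ minus the \emph{word} in the stage-$(n+1)$ generators expressing $(p^{(n,n+1)})^{*}(x)$, since that image need not be a single generator (this word is computable by Claim~\ref{claim:4}); and the uniform computability of the induced maps deserves a sentence noting that expressing a cocycle class in the chosen generators of $\mathrm{Ker}(d_i)/\mathrm{Im}(d_{i-1})$ amounts to solving finite linear systems over $\mathbb{Z}$, which is effective --- the paper glosses this at the same level of detail, so this is not a gap.
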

 \begin{proof}
  We have checked in the previous subsection that effective compactness can be used to effectively list  $\tilde{\mathcal{N}}$.  The refinement map between of two metric covers $C\leq C'$ in $\tilde{\mathcal{N}}$ induces a simplical map between the respective nerves $\tilde{N}(C)$ and $\tilde{N}(C')$, and this induces a computable homomorphism between the respective cohomology groups $H^i(\tilde{N}(C)) \rightarrow H^i(\tilde{N}(C'))$. By Claim \ref{claim:4}, these finitely generated abelian groups 
 are effectively completely decomposable uniformly in $C$ and $i$. Note that $Im \, \phi$ is generated in $H^i(\tilde{N}(C'))$ by the images of the generators of $H^i(\tilde{N}(C))$.
 Similarly to the proof of Claim \ref{claim:4}, choose new generators of  $H^i(\tilde{N}(C'))$ and $Im \, \phi$ so that the latter are integer multiples of the former. 
 In particular, it is easy to see that $Im \, \phi$ is a computable subgroup of $H^i(\tilde{N}(C'))$. This means that we can augment $Im \, \phi$ with extra generators in a computable way to expand it to 
  $H^i(\tilde{N}(C'))$. 
 It follows that  $\lim_{C \in \tilde{\mathcal{N}}} H^i(\tilde{N}(C))  = H^i(G)$ can be consistently defined as the ``union'' of the   $H^i(\tilde{N}(C)), C \in \tilde{\mathcal{N}}$, to obtain a group in which the operations are computable
 and the equality is c.e.
 (The equality is merely c.e. ~since an element $a \in H^i(\tilde{N}(C))$ can be mapped to $0$ in some $H^i(\tilde{N}(C''))$ which appears arbitrarily late in the directed system.) 
 \end{proof}
 
 Since $\widehat{G}$ is torsion-free, to finish the proof  it is sufficient to apply the result below.
 
 \begin{proposition}[Khisamiev~\cite{Hisa2}] Every c.e.-presented torsion-free abelian group  $A$ has a computable presentation.
 
 \end{proposition}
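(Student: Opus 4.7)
Let $A$ be c.e.-presented: we have a computable binary operation $+$ on $\NN$ and a c.e.\ congruence relation $\sim$ on $\NN$ whose quotient is, as a group, isomorphic to $A$. The plan is to construct a computable presentation $B$ together with an effective isomorphism $B \cong A$, realized as a directed union $B = \bigcup_s F_s$ of finite-rank \emph{pure} free abelian subgroups of $A$.

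I proceed in stages. At stage $s$ I maintain a finite-rank free abelian group $F_s$ with a fixed $\ZZ$-basis $b_1^{(s)}, \ldots, b_{n_s}^{(s)}$, together with an assignment $\iota_s$ sending each basis element to an index in $\NN$ that represents its image in $A$. The inclusion $F_s \hookrightarrow F_{s+1}$ will be a computable embedding of free abelian groups. The crucial invariant is that $F_s$ sits as a \emph{pure} subgroup of $A$: if $n \cdot x \in F_s$ in $A$ for some $n \ne 0$ and $x \in A$, then already $x \in F_s$. Purity guarantees that equality on $B = \varinjlim F_s$ is completely determined by the specified bases, and is hence decidable.

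At stage $s+1$, I process the next element $a \in A$ and any newly enumerated equalities in $\sim$. In parallel I search for a witness of a relation $m \cdot a \sim \sum_i m_i \cdot \iota_s(b_i^{(s)})$ with $m \ne 0$. If such a witness is found, torsion-freeness forces the rational coordinates $(m_i/m)$ of $a$ in the $\QQ$-span of $F_s$ to be unique; I form $F_{s+1}$ by augmenting $F_s$ to contain $a$ as a pure element through an effective basis change of Smith-normal-form type. If no relation is witnessed by stage $s+1$, I tentatively adjoin $a$ as a fresh basis element of $F_{s+1}$.

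The main obstacle is that the absence of any relation is only $\Pi^0_1$, so a tentatively added basis element may later turn out to be $\QQ$-linearly dependent on earlier ones. The remedy is an injury/revision mechanism: when a relation involving a tentative element is eventually witnessed at some later stage $t$, perform a computable basis change in $F_t$ identifying that element with its now-revealed rational combination. Torsion-freeness is essential here---each relation $m \cdot a = c$ pins down $a$ uniquely in the divisible hull $A \otimes \QQ$---so the sequence of basis changes is mutually coherent and cannot oscillate. A standard priority-bookkeeping argument then shows that each index of $B$ is revised only finitely often, and the resulting $B$ is the desired computable torsion-free abelian group isomorphic to $A$.
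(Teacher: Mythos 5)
There is a genuine gap, and it sits exactly where the real work of Khisamiev's theorem lies: your revision mechanism is incoherent for a \emph{computable} presentation. In a computable copy $B$, once two elements are enumerated their equality status is decided and permanent. When you ``tentatively adjoin $a$ as a fresh basis element'' of $F_{s+1}$, you thereby commit, for all integer tuples already processed, that $m b_a - \sum_i m_i b_i \neq 0$ in $B$. If at a later stage the relation $m\, a \sim \sum_i m_i\,\iota(b_i)$ is witnessed in the c.e.\ copy, you cannot ``perform a computable basis change in $F_t$ identifying that element with its now-revealed rational combination'': that identification would force $m b_a - \sum_i m_i b_i = 0$, i.e.\ it would revise an equality decision already made in $B$ --- which is precisely the feature that distinguishes a c.e.\ presentation from a computable one. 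The same problem appears in your framing of the $F_s$ as subgroups \emph{of} $A$ with decidable equality ``guaranteed by purity'': purity (and even plain linear independence) is $\Pi^0_1$ and can never be certified at a finite stage, so for tentative elements your decidability claim is exactly the assertion you are trying to prove. The remark that torsion-freeness ``pins down $a$ in the divisible hull'' does not address this; uniqueness of rational coordinates is not the obstacle.

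The paper's (Khisamiev/Nurtazin-style) argument keeps the equality relation of the constructed copy fixed forever and instead revises the $\Delta^0_2$ map from the constructed copy into the c.e.\ copy $U$. When a combination $h=\sum_i m_i b_i$ is discovered to map to $0$ in $U$, one changes basis of the current finitely generated piece so that $n g_{k} = h$ for a new generator $g_k$; torsion-freeness then forces the \emph{image} of $g_k$ to be $0$, so $g_k$ can be safely re-targeted --- it is declared equal (under the map) to a combination of the surviving generators with coefficients chosen so large that no previously committed relation or inequality in the constructed copy is violated. One then argues that these re-targetings stabilize (here one needs $A$ nonzero, cf.\ Remark 6.11 of the paper) and that the limit map is a surjective isomorphism. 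Your proposal is missing this re-targeting idea and the large-coefficient device that makes it consistent with prior commitments; the closing appeal to ``priority bookkeeping'' and ``cannot oscillate'' asserts, but does not supply, exactly this missing argument. If you reorient your construction so that $B$ is abstract, equality in $B$ is never revised, and only the embedding $B\to A$ is $\Delta^0_2$-corrected along the lines above, the rest of your outline (Smith-normal-form basis changes when relations are found, processing each element of $A$ for surjectivity) can be salvaged.
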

 
 In contrast with Dobritsa's proof, the detailed argument contained in  \cite{Hisa2} seems complete and correct, but it is not necessarily easy to follow by   current standards.
 We thus give an extended proof idea.
 
\begin{proof}[Proof idea] 
 At every stage, we have a finitely generated partial group $C_s$ and an embedding of $C_s$ into a c.e.~presentation $U$ of $A$.
 Suppose $C_s$ is generated by $b_1, b_2, \ldots, b_{k(s)}$. At a later stage we may  discover that  the image of $h = \sum_i m_i b_i$ in $U$ is declared equal to $0$.
As in the proof of Claim~\ref{claim:4}, we can pick a new collection of generators $g_1, \ldots, g_{k(s)}$ of $C_s$ such that $n g_{k(s)} = h$ for some $n$. (Notice however that here we are dealing with partial groups which adds a bit of extra combinatorial noise to the construction. In particular, strictly speaking, we cannot refer to Claim~\ref{claim:4} since all our objects are finite partial groups. This is nonetheless not really a problem, but  we omit the details.)  

It is crucial that the image of $g_{k(s)}$ in $U$ must be $0$ as well, because $A$ is torsion-free. We thus can safely dispose of  $g_{k(s)}$ by declaring it equal to a linear combination of the generators which have not yet been declared zero using sufficiently large coefficients. 
For a modern clarification and verification of this strategy, see the proof of Claim~4.7 in~\cite{HMM}.
Note that for the process to eventually stabilise, we need to have at least one non-zero element in $U$.
It is routine to show that, if the setup of the construction is right, the map from $C$ to $U$ is eventually stable (i.e., the process is $\Delta^0_2$) and its limit is a surjective isomorphism of groups.
 We omit technical details.
\end{proof}

\begin{remark}\label{rem:unif}
It follows that there is a uniform procedure which, given a c.e.-presentation of a \emph{non-zero} torsion-free abelian group, outputs its computable presentation.
See also \cite{melbsl} for a discussion. (Note  that the requirement of being \emph{non-zero} was omitted in the statement of Theorem 4.6 in \cite{melbsl} since the zero group was viewed as torsion in~\cite{melbsl}.)
\end{remark}

\section{A connected compact counterexample. Proof of Theorem~\ref{theo:nonpont}}

Recall that the Pontryagin duals of connected compact Polish abelian  groups  are exactly the discrete torsion-free abelian groups; see, e.g.,  \cite{Morris}.
We prove that 
there exists a compact computably metrized connected  Polish abelian group $G$ such that $\widehat{G}$ has no computable presentation.  

\subsection{Notation and an informal description of the proof} Recall the notation of Subsection~\ref{ex:simple}.
 We build $G \leq \mathbb{A} = \prod_i \mathbb{T}_i$ so that its discrete dual $\widehat G$ is isomorphic to the additive subgroup of the rationals generated by
$\{\dfrac{1}{p_i}: i \in U\}$, where $(p_i)$ is the standard listing of all primes and $U$ an infinite set of natural numbers.
The following fact is immediate; it is a special case of the elementary characterisation of computable subgroups of $(\mathbb{Q}, +)$ which can be traced back to Maltsev~\cite{Mal}.
\begin{claim}\label{claim:maltsev}
 $\widehat{G} = \langle \{\dfrac{1}{p_i}: i \in U\} \rangle \leq \mathbb{Q}$ is computably presentable if, and only if, $U$ is c.e.
\end{claim}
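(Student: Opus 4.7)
The plan is to prove both implications directly, identifying $\widehat{G}$ with the concrete subgroup $H := \langle\{1/p_i : i \in U\}\rangle \le (\mathbb{Q},+)$. A useful preliminary observation is that every element of $H$ has a unique representation $a/b$ in lowest terms where $b$ is a square-free product of primes in $\{p_j : j \in U\}$; this holds because the generators $1/p_i$ have square-free denominators, a property preserved under $\mathbb{Z}$-linear combinations (by clearing denominators one sees the resulting denominator divides the product of the primes occurring, which is square-free).

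For the direction ``$U$ c.e.\ $\Rightarrow$ $H$ computably presentable'', I would fix a computable enumeration $(i_k)\sN k$ of $U$ and realise $H$ as the ascending union $\bigcup_{k} H_k$ where $H_k = \tfrac{1}{p_{i_0}\cdots p_{i_k}}\mathbb{Z}$ is cyclic. Coding elements of $H$ by pairs $(n,k) \in \mathbb{Z}\times\NN$ standing for $n/(p_{i_0}\cdots p_{i_k})$, any two such representatives are effectively comparable by clearing to a common denominator using the already-enumerated primes $p_{i_0},\ldots,p_{i_{\max(k,k')}}$; this makes equality decidable and addition computable. Passing to canonical representatives (smallest $k$ with $(n,k)$ representing the element) yields a computable presentation on $\NN$.

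For the converse, the plan is to extract a c.e.\ enumeration of $U$ from divisibility data in a given computable presentation of $H$. Since $U$ is infinite, $H \neq 0$, so one may computably locate a fixed nonzero $g \in H$ (the first nonzero element in the presentation). I would then consider
\[
U_g \;:=\; \{\, i \in \NN \,:\, \exists\, y \in H,\ p_i \cdot y = g\,\},
\]
which is $\Sigma^0_1$ uniformly in $i$ and hence c.e. The core calculation is that $U_g =^* U$: writing $g = a/b$ in lowest terms, a short case split on whether $p_i \mid a$ shows that $g$ is $p_i$-divisible in $H$ iff either $p_i \mid a$, or ($p_i \nmid ab$ and $i \in U$). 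With the finite sets $F_a = \{i : p_i \mid a\}$ and $F_b = \{i : p_i \mid b\}$ (disjoint because $\gcd(a,b)=1$), this rewrites as $U_g = F_a \cup (U \setminus (F_a \cup F_b))$, so $U \triangle U_g \subseteq F_a \cup F_b$ is finite. Since c.e.\ sets are closed under finite symmetric difference, $U$ is itself c.e.

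The only subtle point --- the main obstacle --- is that the naive characterisation ``$i \in U$ iff $g/p_i \in H$'' is not exactly correct: indices $i$ with $p_i \mid a$ yield false positives and those with $p_i \mid b$ yield false negatives. The saving grace is that both discrepancy sets are finite and depend only on the fixed witness $g$, so they are absorbed by the ``modulo finite'' slack --- this is the classical Maltsev-style trick, and once the divisibility bookkeeping is in hand the rest is routine.
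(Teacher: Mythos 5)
Your proposal is correct, and it is essentially the classical Maltsev-style argument that the paper does not spell out but simply cites (\cite{Mal}, \cite{melbsl}): one direction realises the group as an effective union of cyclic groups along an enumeration of $U$, and the converse recovers $U$, up to a finite symmetric difference absorbed by the finitely many primes in the numerator and denominator of a fixed nonzero element $g$, from the c.e.\ set of primes $p_i$ such that $g$ is $p_i$-divisible. So you are taking essentially the same route as the source the paper defers to, and I see no gap (only the cosmetic point that the enumeration of $U$ should be taken injective, or the product taken over distinct primes, when forming the cyclic approximations).
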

The elementary proof can be found in \cite{melbsl}. Thus, it is sufficient to construct an effectively metrized $G$ of this form such that the invariant set $U$ of $\widehat G$ is not c.e. 
Recall the naive construction described in \S \ref{const:naive}. The plan is  to build $\widehat G \leq \mathbb{Q}$ and, based on the naive construction,  produce $G \cong \widehat{\widehat{G}} \leq \mathbb{A}$. We identify $G$ and $\widehat{\widehat{G}}$ throughout the proof.

In the naive construction, we declare $n \chi_s = \sum_{i <s}m_i \chi_i$ for all $\chi_i \in \mathbb{T}_i \cap G$ whenever we have a relation
$n g_s = \sum_{i <s}m_i g_i$ in $\widehat G = \{g_0 = 0, g_1 = 1, g_2, g_3, \ldots\}$. In \S \ref{const:naive}, we also declare $\pi_i G = \mathbb{T}_i \cap G$ to be either the whole $\mathbb{T}_i$ or a discrete 
subgroup of $\mathbb{T}_i$ depending on the order of generator  $g_i$ of $\widehat G$. In our case, each non-zero $g_i$ have infinite order since $\widehat G $ is torsion-free.
Thus, we need to worry only about the rules  of the form $n \chi_s = \sum_{i <s}m_i \chi_i$.

The idea is to \emph{not} make immediate commitments and, for example, ensure that  $n \chi_s = \sum_{i <s}m_i \chi_i$ holds up to error $2^{-s}$ at stage $s$.
This should be understood as follows.

For example, we can begin with believing that $p g_2 = g_1$, where $p$ is a prime.
This corresponds to the rule 
\begin{equation}\tag{\textdagger} p\chi_2 = \chi_1,\end{equation}
\noindent which must be satisfied by every  $\chi_1 \in \mathbb{T}_1 \cap G$ and $\chi_2 \in \mathbb{T}_2 \cap G$.
At stage $s$, we produce a finite list of pairs of open intervals $(I_1, I_2) \subset \mathbb{T}_1 \times \mathbb{T}_2$ such that the size of each $I_i$ is at most  $2^{-s}$, and such that $p I_2 
\subseteq I_1$.
We can additionally require that all such  $I_2$ listed so far (of size $2^{-s}$) together cover the whole $\mathbb{T}_2$, and thus  the corresponding intervals $I_1$ cover  $\mathbb{T}_1$.
This is equivalent to approximating, up to error $2^{-s}$, the effectively continuous map $x \rightarrow  px$ between $\mathbb{T}_2$ and $\mathbb{T}_1$.

\begin{remark} \label{rem:preimages}
Note that, unless $x=0$, there are $p$ pre-images of $x \in \mathbb{T}_1$ in $\mathbb{T}_2$ under $x \rightarrow  px$. We could explicitly computably list the name of the inverse multi-function $x \rightarrow p^{-1} x$. But we only need that these pre-images can be computed, which is obvious.
\end{remark}

If nothing has to be changed at a later stage and $p\chi_2 = \chi_1$ is the only rule, then $G$ is the intersection of the effectively closed sets  $H_s$, such that $H_s$ is composed of
$(\chi_i: i \in \NN)$ where  $p\chi_2 = \chi_1$ up to error $2^{-s}$. For every fixed $s$, it is easy to list a computable dense subset of $H$: simply list the rational points in $\mathbb{T}_1$ and take their pre-images in $\mathbb{T}_2$ under the approximated map. We initiate such a list; at every stage we have  only finitely many points which have already been listed.
For each such rational $r_1 \in \mathbb{T}_1$, as $s$ gets larger these pre-images computably converge to finitely many points in $ \mathbb{T}_2$; see Remark~\ref{rem:preimages}. This way we will obtain a computable dense subset of $G$ in the limit.  

\

However, we must make the set of primes $U$ not c.e., and this means that some primes will have to be extracted from $U$ at some stage. For example, suppose that at stage $s$ we change our mind and decide that $p \notin U$. The idea is to take $q$  very large which is currently outside of $U$, declare $q \in U$, and
replace the rule  $x_0 = p x_{i}$   with the rule $x_0 = q x_{i}$.
Since $q$ is very large, each of the finitely many intervals of $ \mathbb{T}_2$ of size $2^{-s}$ currently listed in the $2^{-s}$-approximation of $x \rightarrow px$ contains at least one  point of the form
 $\dfrac{k}{q-p} $, where $ k \leq (q-p)$.
Under $x \rightarrow qx$, the point $\dfrac{k}{q-p} $  is mapped to \
\begin{equation} \tag{\textdaggerdbl}
\dfrac{kq}{q-p} = \dfrac{k(q-p+p)}{q-p} = k+ \dfrac{kp}{q-p} = \dfrac{kp}{q-p}.
\end{equation}
 In other words, $x \rightarrow px$ and $x \rightarrow qx$ agree on all points of this form.
At stage $s$ we have enumerated only finitely many intervals approximating $x \rightarrow px$ (in the sense as explained above). Using these intervals, we have initiated the approximation of finitely many 
pre-images $$y_0, \ldots, y_{p-1} \in \mathbb{T}_2,$$  of finitely many rational points $r \in \mathbb{T}_1$ under  the map $x \rightarrow px$. 

For each such $y_i \in \mathbb{T}_1$, fix the intervals  $I\ni y_i$ and $J \ni r$ of size at most $2^{-s}$ such that $(J, I)$ has already been   listed in the $2^{-s}$-approximation of  $ px: \mathbb{T}_2 \rightarrow \mathbb{T}_1$. Choose a $k$ such that 
$z = \dfrac{k}{q-p}  \in J$.  It follows from (\textdaggerdbl) that $qz = pz \in I$. In other words, since $r \in I$, the interval $J$ can equivalently be viewed as a $2^{-s}$-approximation
to the point $q r$.

We thus \emph{switch} the process of enumeration of the dense set of $G$ by replacing (\textdagger)~with the new rule
\begin{equation}\tag{\textasteriskcentered } q\chi_2 = \chi_1,\end{equation}
 which, form this stage on, must hold  for every  $\chi_1 \in \mathbb{T}_1 \cap G$ and $\chi_2 \in \mathbb{T}_2 \cap G$.

It follows that we can adjust our approximation of the dense set of $G$ to the new rule (\textasteriskcentered),  but the current best approximations to the old preimages of $r$ can be recycled as approximations to some of  its new preimages. Unless $r=0$, we also  have to introduce more approximations to preimages of $r$ since $q$ is much larger than $p$; see Remark~\ref{rem:preimages}.

In the case of many generators, we work with $\mathbb{T}_1$ which corresponds to $1 \in \mathbb{Q}$ and $\mathbb{T}_i$ that corresponds to $g_i \in G$ such that $g_i = \dfrac{1}{p}$ for some $p$ which depends on $i$ and is uniquely determined by $i$. There is no interaction between strategies working with different generators.
At the end, we will define infinite sequences of shrinking intervals, and we define a dense computable sequence on $G$ using these infinite sequences similarly to how it is done 
in the Baire space: starting from some large enough index, always pick the next interval in your approximation to be the first one found inside the previous interval of the cruder approximation.

\subsection{Formal proof}
Take a non-computable $d$-c.e.~set of primes $U = \rm range \, \lim_s f(i,s)$, where $f: \NN \rightarrow \NN$ is injective, total computable, and
 for every $i$ there exists at most one $s$ such that $f(i,s) \neq f(i, s+1)$; in this case we also require $$f(i,s) \neq f(i, s+1) \implies f(i,s+1) > s.$$
In other words, let $U = \{\lim_s f(i,s): i \in \NN\}$, where the value of the total computable function $f(i,s)$ of two arguments changes at most once for each fixed $i$, and furthermore if this happens at $s$ then the new value is set equal to a number larger than 
the stage.  
It is easy to build such a $U$ and additionally satisfy the usual requirements $W_e \neq U$ for $e\in \NN$; we leave the standard details to the reader. We note that, when $f(i,s+1) \neq f(i,s)$, the value  $f(i,s+1)$ can be set as  large as we desire during the construction. 

Since $U$ is not c.e., the group $\widehat{G} = \langle \{\dfrac{1}{p_i}: i \in U\} \rangle \leq \mathbb{Q}$ has no computable presentation (Claim~\ref{claim:maltsev}).
We claim that $G \cong \widehat{\widehat{G}}$ is computably metrizable. Let $\widehat{G} = \{g_0 = 0, g_1 = 1, g_2, g_3 \ldots \}$.
We build $G$ as a closed subset of $\mathbb{A} = \prod_i \mathbb{T}_i$, where $\mathbb{T}_i$ corresponds to the value of a character $\chi \in \widehat{\widehat{G}}$ 
evaluated at $g_i$; see Subsection~\ref{ex:simple}.
To define a computable metric, initiate an approximation of the dense set $(\rho^j)_{j \in \NN}$ of $G$, as follows.

\subsubsection{Construction} At stage $s$, define open intervals $U^j_{i,s} \subseteq \mathbb{T}_i$ with rational end-points, which satisfy, for every $i,j \leq s$:
\begin{enumerate}
\item $U^j_{i,s} \subseteq \mathbb{T}_i$ and $0 \in U^j_{0,s}$;

\item $U^j_{i,s}\subseteq U^j_{i,s-1}$ if  $U^j_{i,s-1}$ is defined;
\item $p_{f(i,s)}U^j_{i,s} \subseteq U^j_{1,s}$;
\item $U^j_{1,s}$ cover $\mathbb{T}_1$;
\item $diam \, U^j_{i,s} <2^{-s}$;
\item If $c_{j, s}$ is the center of $U^j_{1,s}$, then for each $y\in \mathbb{T}_i $ such that $p_{f(i,s)} y = c_{j,s}$, then
there is an interval $U^k_{i,s}$ with center $y$. If there is no such interval, then introduce a new one with this property which also satisfies $(1)-(4)$ (if they are applicable).

\end{enumerate}
 (We can have $U^j_{i,s} =U^l_{i,s} $ for $j \neq l$.)
This finishes the construction.

\

\subsubsection{Verification} Since the informal explanation was rather detailed, we give a somewhat compressed verification  to avoid unnecessary repetition. Use $f(i,s) \neq f(i, s+1) \implies f(i,s+1) > s$, and indeed that $ f(i,s+1)$ can be picked larger than $h(i)$ for any given total computable function $h$.
In particular, assume that the value is so large that, for every $j$ such that $U^j_{1,s-1}$ is defined,  there exists an integer $k$ with the property  $$ \dfrac{k}{p_{f(i, s+1)}-p_{f(i, s)}}  \in U^j_{1,s-1}.$$
It follows from the equation below that holds in $\mathbb{R}/\mathbb{Z}$ (cf.(\textdaggerdbl)):
\begin{equation} \tag{\textdaggerdbl$'$}
\dfrac{kp_{f(i, s+1)}}{p_{f(i, s+1)}-p_{f(i, s)}} = \dfrac{kp_{f(i, s)}}{p_{f(i, s+1)}-p_{f(i, s)}}
\end{equation}
that condition (3) can be maintained at every stage of the construction, for a suitable choice of intervals. For that, pick $U^j_{i,s}$ using a small enough neighbourhood of $$ \dfrac{k}{p_{f(i, s+1)}-p_{f(i, s)}}  \in U^j_{1,s-1}.$$
The rest of the verification is routine. Using the intervals $U^j_{i,s}$,  define a sequence   $(\rho^j)_{j \in \NN}$ of points uniformly computable in $\mathbb{A}$, and consider the closure $ cl{(\rho^j)_{j \in \NN}}$ of the sequence in $\mathbb{A}$.
It forms a computably metrized group under the operations inherited from $\mathbb{A}$.
The group is isomorphic to the group of characters of $\widehat{G}$ by design; see Subsection~\ref{ex:simple} for more details. By Pontryagin - van Kampen duality,  $ cl{(\rho^j)_{j \in. \NN}} \cong  \widehat{\widehat{G}} \cong G$. By the choice of $U$ and Claim~\ref{claim:maltsev}, $\widehat{G}$ has no computable presentation.

\section{Consequences}

\subsection{Proof of Corollary~\ref{corcor}}
Recall that the corollary states:
\begin{enumerate}
 \item There exists a computably metrized connected Polish group not homeomorphic to any effectively compact Polish space. (This simultaneously  answers (Q1) and (Q2).)
 
 \item There exists a $\Delta^0_2$-metrized connected Polish group not homeomorphic to any computably metrized Polish space. (This simultaneously answers (Q3) and   Question 3 of~\cite{uptohom}.)

\end{enumerate}

The first clause of the corollary follows from the proof of Theorem~\ref{pont:con} and the previous theorem. Indeed, recall that to produce the discrete dual of an effectively compact connected abelian group we do not need to use the group operation. Thus, if the computably metrized compact connected domain of the group constructed in  Theorem~\ref{theo:nonpont} was homeomorphic to an effectively compact space, then we would be able to produce a computable presentation of its dual group.

To see why the second clause of the corollary holds,
relativize the proof  the proof of Theorem~\ref{theo:nonpont} to $0'$.   We obtain a $\Delta^0_2$-metrized Polish group $G_S$ such that $\widehat{G_S}$ has no $\Delta^0_2$-presentation. 
Assume the underlying Polish space $M$ of $G$ admits a computable metrization, and thus a $\Delta^0_2$-effectively compact presentation.   Apply the construction from the proof of Theorem \ref{pont:con} to this presentation to calculate a $\Delta^0_2$-presentation of $\widehat{G_S}$, which is a contradiction. (Recall that we do not need to use the group operation on $M$.)

\subsection{Background on index sets}\label{index:subsection}
Let 
$$M_0, M_1, \ldots$$
be the list of all partially computable presentations of  metrized Polish spaces with two partial operations on them 
(one binary and une unary), and let $\overline{M_e}$ denote the completion of $M_e$.
It is not hard to see that $$\{e: \overline{M_e} \mbox{ is a compact connected Polish group}\}$$  is arithmetical ($\Pi^0_3$); see~\cite{Pontr}.
It is thus makes sense to study the complexity of index sets of various classes compact connected groups which we define below.

Let $K$  be a class of compact topological groups. The index set of $K$, or the characterisation problem for $K$,
is the set 
$$\{e: \overline{M_e} \in K\}.$$
The isomorphism problem for $K$ is the set
$$\{(e,j): \overline{M_e}, \overline{M_j} \in K \mbox{ and } \overline{M_e} \cong \overline{M_j}\},$$
where in our case $\cong$ stands for topological group  isomorphism of groups.

These definitions mimic the analogous definitions for computable discrete structures which can be found in, e.g., \cite{GonKni}.
An argument can be made that the complexity of these sets accurately reflect the complexity of the classification problem for $K$,
especially if the estimates that we obtain can be relativized to an arbitrary oracle; see \cite{MDsurvey} for a detailed discussion.

\subsection{Proof of Corollary~\ref{corcorcor}}
Recall that the corollary states that, for each of the following classes, both the characterization problem and the isomorphism problem are arithmetical:

\begin{enumerate}
\item compact abelian Lie groups;
\item direct products of solenoid groups;
\item connected compact abelian groups of finite covering dimension.
\end{enumerate}
 
We can  arithmetically (this is $\Sigma^0_1$) check whether $\overline{M_e}$ is a \emph{non-zero}  compact connected abelian group. Then the dual of the group has to be non-zero torsion-free abelian, and it is also isomorphic to the first $\rm\check{C}$ech cohomology group which admits a $
\Sigma^0_2$ presentation uniformly in $e$. By Remark~\ref{rem:unif}, we can uniformly produce a  $\Delta^0_2$-presentation of the discrete torsion-free dual of $\overline{M_e}$. Thus, in each case it is sufficient to check that the isomorphism and the characterisation problems of the respective discrete duals are arithmetical.

\subsubsection{Compact abelian Lie groups.}
It is well-known that, up to topological isomorphism, every compact abelian Lie group is the product of finitely many copies of the unit circle~$\mathbb{T}$;  e.g., \cite{PontBook}.  The duals of such groups are exactly the direct sum of the same number of copies of $\mathbb{Z}$, i.e., are free abelian groups of finite rank.
 It is easy to see that the index set  and the isomorphism problem for  free abelian groups of finite rank is arithmetical; the same is true for $\Delta^0_2$ free abelian groups of finite rank.
Indeed, there is a uniformly computable list of isomorphism types of such groups, and every group in the list is relatively computably categorical. 
It is sufficient to ask whether there is a $\Delta^0_2$ computable isomorphism from the given $\Delta^0_2$ presentation of the dual to one of the groups in the computable list. This is clearly an arithmetical question. This makes the isomorphism problem arithmetical too.

\subsubsection{Direct products of solenoid groups.}
Recall that the solenoid groups are exactly the duals of  additive subgroups of $\mathbb{Q}$. Thus, the duals of  direct products of solenoid groups are exactly the directs sums of additive subgroups of $\mathbb{Q}$. Such groups are called completely decomposable. 
The main result of \cite{DoMel1} says that both the isomorphism problem and the characterisation problem for completely decomposable groups are arithmetical.
The proof in \cite{DoMel1} can be relativised to $0'$, thus giving (2) of the corollary.

\subsubsection{Compact connected abelian groups of finite dimension}
Recall that, under duality, covering dimension corresponds to Pr\"ufer rank of the discrete dual (Thm 47 of ~\cite{PontBook}).
It is clear that the property of having a finite Pr\"ufer rank is arithmetical.   It remains to observe that every such group is relatively computably categorical, so we again can check if there is a $\Delta^0_2$-isomorphism (between two $\Delta^0_2$ duals) instead of an arbitrary isomorphism.
   It follows that both the isomorphism problem and the characterisation problem for this class are arithmetical.

\section{Extending the theorem of Dobritsa to t.d.l.c.~groups: Proof of Proposition~\ref{uber-Dobritsa}}\label{sec:dob}

By hypothesis   $G$ has a  computable t.d.l.c.\ presentation as in Definition~\ref{def:procountable}(2).
In other words,  we may assume $G$  equals the inverse limit of uniformly computable discrete groups $(A_i)$ under uniformly computable surjective projections $\phi_i: A_i \rightarrow A_{i-1}$ having finite kernels; furthermore, the kernels are given by their strong indices. 
Proposition~\ref{uber-Dobritsa} says that we can pass to a computable t.d.l.c.\ presentation in which each discrete $B_i$ has a uniformly computable maximal linearly independent set. The rest of the section is devoted to the proof of this proposition.

\subsection{The choice of technique}

%Interestingly, the  main result in \cite{HMM} shows that there is nothing special about abelian groups in Dobritsa's theorem; a similar theorem holds for differentially closed fields, real-closed fields, and difference-closed fields~\cite{HMM}, and for valued fields as well~\cite{valued}. In each of these classes one can always produce a computable presentation with a computable basis with respect to the notion of independence standard in the respective class. The result is stated in terms of closure operators (pregeometries) which are used as an abstraction to the notion of independence; since we will not need it in this paper we omit the definition (see \cite{marker}).

Recall that Dobritsa proved that every computable abelian group has a computable presentation with computable linearly independent set.
The original, 1-page proof of  Dobritsa~\cite{feebleDobrica} is extremely compressed and omits some important details. 
 Dobritsa used a strategy that, as far as we know, was invented by Nurtazin~\cite{nurt} for a different purpose; the brief sketch in the research announcement~\cite{nurt}  is even more compressed than the aforementioned proof of Dobritsa\footnote{As far as we know,  the volume containing \cite{nurt} is stored at the basement of the Sobolev Institute of Mathematics library. Only a physical copy is available and only upon a special request.}.  Both major surveys on the subject \cite{Khi} and \cite{melbsl} also contain merely outlines of the proof. The only complete proof of the theorem of Dobritsa in the literature can be found in  \cite{Pontr}.
It replaces the clever combinatorial strategy of Nurtazin~\cite{nurt} with an application of abelian group theory. The result then becomes a special case of the more general theorem from \cite{HMM} which is not restricted to groups but works for other classes too (such as differential closed fields, for instance).

Unfortunately, we cannot use the abstract techniques of \cite{Pontr,HMM} to extend the theorem of Dobritsa to the t.d.l.c.~case; this is because we have to simultaneously approximate  the projections between the $B_i$, so the group-theoretic structure on $B_i$ cannot be separated from
the process of building a maximal linearly independent set. It has to be done all at once.
We will have to use a version of   Nurtazin's strategy, which tends to    make the combinatorics in the proof more complex.

 Not only is  the theorem of Dobritsa a special case of  Proposition~\ref{uber-Dobritsa}, but the proof of  Proposition~\ref{uber-Dobritsa} that we give below is the only complete proof of Dobritsa's result in the literature 
which relies on a variation of Nurtazin's strategy rather than on some other strategy. The only similar full proof of this kind can be found in \cite{GLSOrdered} where Nurtazin's strategy was modified to handle the somewhat tamer class of computable ordered abelian groups.

We begin our discussion with the elementary case of finite rank, and then move on to the discussion and the proof of the general case.

\subsection{Proposition~\ref{uber-Dobritsa}  in the case of finite rank.}

For an abelian group $A$, elements are linearly independent in $A$ iff they are independent in $A/T(A)$, where $T(A)$ is the subgroup of  torsion elements of $A$.
If $\phi: A \rightarrow B$ is a surjective homomorphism with finite kernel, then the kernel in particular consists of torsion elements. These simple facts will be used to verify the following. 
\begin{claim}\label{easy:claim}
Suppose $\phi: A \rightarrow B$ is a surjective homomorphism of abelian groups with finite kernel.
Suppose $C \subseteq A$. If $C$ is a maximal linearly independent in $A$ then $\phi(C)$ is maximal linearly independent in $B$. 

Conversely, suppose $D = \{d_0, d_1, \ldots\}$ is maximal linearly independent in $B$. Then any set $C'$ of the form
$\{c'_0, c'_1, \ldots\}$ where $\phi(c'_i) = d_i$ is a maximal linearly independent subset of $A$.

\end{claim}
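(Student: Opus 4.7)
The plan is to reduce both directions to the statement that $\phi$ induces an isomorphism between the torsion-free quotients, and that linear independence in an abelian group only depends on images in the torsion-free quotient.

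First, I would set up the reduction to torsion-free quotients. Since $\ker\phi$ is finite, every element of $\ker\phi$ has finite order, so $\ker\phi \subseteq T(A)$. Clearly $\phi(T(A))\subseteq T(B)$. I claim more: if $\phi(a)\in T(B)$ then some $n\geq 1$ satisfies $n\phi(a)=\phi(na)=0$, so $na\in\ker\phi\subseteq T(A)$, and so a further multiple kills $a$; hence $a\in T(A)$. Thus $\phi^{-1}(T(B))=T(A)$. Combined with surjectivity of $\phi$, the induced map
\[
\bar{\phi}\colon A/T(A)\longrightarrow B/T(B)
\]
is a well-defined surjection with trivial kernel, hence an isomorphism of torsion-free abelian groups.

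Next I would record the standard fact that for any abelian group $G$ with projection $\pi\colon G\to G/T(G)$, a family $g_0,g_1,\ldots\in G$ is linearly independent in the sense of the paper if and only if the images $\pi(g_0),\pi(g_1),\ldots$ are linearly independent in $G/T(G)$, and this family is maximal linearly independent in $G$ iff its images form a maximal linearly independent set in $G/T(G)$. One direction is immediate from the definition (a nontrivial $\mathbb Z$-relation pushes forward). For the reverse, if $\sum n_i g_i=0$ in $G$ then $\sum n_i\pi(g_i)=0$, forcing all $n_i=0$ by independence in $G/T(G)$; and maximality transfers because every $b\in G$ having images $\pi(b)$ dependent on $\pi(g_0),\pi(g_1),\ldots$ witnesses dependence of $b$ on $g_0,g_1,\ldots$ in $G$ (a relation modulo $T(G)$ lifts to a relation in $G$ after multiplying by a suitable torsion order).

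With these two ingredients both directions of the claim become immediate. For the forward direction, if $C$ is maximal linearly independent in $A$ then its image in $A/T(A)$ is maximal linearly independent there; applying the isomorphism $\bar{\phi}$, the image of $\phi(C)$ in $B/T(B)$ is maximal linearly independent, whence $\phi(C)$ is maximal linearly independent in $B$. For the converse, suppose $D=\{d_0,d_1,\ldots\}$ is maximal linearly independent in $B$ and choose any lifts $c'_i\in\phi^{-1}(d_i)$. The images of the $d_i$ in $B/T(B)$ are maximal linearly independent; under $\bar{\phi}^{-1}$ these correspond to the images of the $c'_i$ in $A/T(A)$, so the latter are maximal linearly independent there, and hence $C'=\{c'_0,c'_1,\ldots\}$ is maximal linearly independent in $A$.

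There is essentially no obstacle; the only point that requires a small argument is that $\phi^{-1}(T(B))=T(A)$, which is exactly where the hypothesis of a finite (equivalently, torsion) kernel is used. Everything else is a formal transfer through the isomorphism $\bar{\phi}$ of torsion-free quotients.
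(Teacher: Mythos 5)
Your proof is correct, and it takes a genuinely different route from the paper's. The paper argues directly with relations: a dependence $\sum_i n_i\phi(c_i)=0$ pulls back to an element of the finite kernel, and multiplying by a suitable integer (essentially the order of that kernel element) turns it into a genuine relation upstairs; maximality is handled by the same multiply-by-a-large-enough-$d$ trick. You instead observe that a finite (hence torsion) kernel gives $\phi^{-1}(T(B))=T(A)$, so $\phi$ induces an isomorphism $\bar\phi\colon A/T(A)\to B/T(B)$, and then transfer everything through the standard fact that (maximal) linear independence is detected in the torsion-free quotient. Both arguments are sound, and in fact both only use that $\ker\phi$ is torsion, so your version makes the true level of generality explicit and is arguably cleaner conceptually. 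What the paper's hands-on proof buys is its effective content: Remark~\ref{rem:kerd} extracts from that proof a ``large enough $d$'' computable from the strong index of $\ker\phi$, which is exactly what the dynamic argument for Proposition~\ref{uber-Dobritsa} later needs; in your formulation that constant is hidden in the step ``a relation modulo torsion lifts after multiplying by a suitable torsion order'' (and the quotient $A/T(A)$ is not an effective construction), though the same bound --- the exponent of the kernel --- can of course be read off. One small expository slip: in your ``standard fact'' the two sentences labelled ``one direction'' and ``for the reverse'' both prove that independence of the images implies independence upstairs; the other implication (independence in $G$ implies independence in $G/T(G)$) is the one that needs the multiply-by-the-torsion-order argument, which you do state, but only in the parenthetical about maximality --- it should be invoked there as well.
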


%We will have to deal with specific choices of coefficients in linear combinations of elements later in the proof of Proposition~\ref{uber-Dobritsa}. 
%In contrast with the explanation preceding Claim~\ref{easy:claim},  the   elementary proof of the claim presented below is easily effectivizable. It might  help the reader to understand the main construction which will be presented later.
%, especially if the reader  is retarded and has little experience
%with commutative groups. 

\begin{proof}[Proof of Claim~\ref{easy:claim}]
First, suppose $C = \{ c_0, c_1, \ldots \}$ is a maximal linearly independent subset of $A$.
 Assume $\sum_i n_i \phi(c_i) = 0$, i.e., $\sum_i n_i c_i \in Ker \, \phi$.
Since $Ker \, \phi$ is finite, for $m>0$ large enough we have $$\sum_i m n_i c_i =0,$$
and therefore $m n _i =0 \iff n_i=0$ for all $i$.
 Fix $z \in B$ and let $\phi(y) =z $. By maximality of $C$ in $A$,
there exist $c_i \in C$ and coefficients $m_i $ and $m \neq 0$ such that 
 $\sum_i m_i c_i + m y = 0$.
 It follows that  $\sum_i m_i \phi(c_i) + m z = 0$. Thus, $\phi(C)$ is maximal linearly independent in $B$.
 
 \
 
 Now suppose  that $D = \{d_0, d_1, \ldots\}$ is maximal linearly independent in $B$, and that $C'$ is a subset of $A$ of the form
$\{c'_0, c'_1, \ldots\}$ where $\phi(c'_i) = d_i$.  If $\sum_i m_i c'_i = 0$ then $\phi (\sum_i m_i c'_i) =\sum_i m_i d_i = 0$, which implies $m_i = 0$ for all~$i$.
 Fix $x \in A$; by maximality of $D$ in $B$ there exist $m_1, \ldots, m_k$ and $m \neq 0$ such that
 $$m \phi(x) + \sum_i m_i d_i = 0.$$
 Thus,  $$0 = m \phi(x) + \sum_i m_i \phi(c'_i) = \phi (mx + \sum_i m_i c'_i),$$
 and therefore $mx + \sum_i m_i c'_i$ is a torsion element since $Ker \, \phi$ is finite. Pick $d>0$ large enough so that
 $$0= d(mx + \sum_i m_i c'_i) = dmx + \sum_i d m_i c'_i.$$
It remains to note that $dm \neq 0$. \end{proof}

\color{black}

\begin{remark}\label{rem:kerd}
The ``large enough $d$'' at the end of the proof above can be computed uniformly from the strong index of $Ker \, \phi$.
\end{remark}

We return to the proof of the proposition. By Claim~\ref{easy:claim}, if the rank of $A_0$ is finite, then there is nothing to prove since the maps $\phi_i$ are computable. Throughout the rest of the proof, assume that the rank of $A_0$ (thus, of each $A_i$) is infinite.

\subsection{Proof idea of Proposition~\ref{uber-Dobritsa} in the case of infinite rank}  In brief, we will combine the proof of Dobritsa's Theorem~\ref{thm:Dobritsathm} with a dynamic version of the elementary proof of Claim~\ref{easy:claim} to simultaneously permute  all  $A_i$. Unfortunately, there are several technical issues that cannot be summarised in just one sentence. We give more intuition below.

\subsubsection{The case of only one $A_0$. The factorial trick.}\label{thetrick} We first informally outline the main idea of Dobritsa's original proof in the case when we have only one computable discrete group. 
Given a computable discrete $A$, Dobritsa transforms it into a computable discrete $B$ having a computable maximal linearly independent set $C$, as follows.

Build a $\Delta^0_2$ isomorphism $\theta: B \rightarrow A$.
Initially, let $\theta$ copy $A$ into $B$ without any change. In $B$, declare that a computable set $C = \{c_0, c_1, \ldots\}$ is a linearly independent set.
For simplicity, pick just two $a_0, a_1 \in A$ which currently look linearly  independent  (recall that linear independence is a $\Pi^0_1$-property)
and interpret $c_0$ as the pre-image of $a_0$ and $c_1$ as the pre-image of $a_0$.
If these $a_0$ and $a_1$ are indeed linearly independent, then $\theta(c_i)$, $i =0,1$, do not have to be changed.
However, at a later stage we may discover that, in $A$, $a_0$ and $a_1$ are linearly dependent:
$$n_0 a_0 +n_1 a_1 = 0,$$
and therefore we must pick  a new image for $c_1$ in $A$.

For that, choose the first found $d \in A$ which currently looks independent of $a_0$ (recall independence is $\Pi^0_1$) and, following the idea of Nurtazin~\cite{nurt}, define
$$\theta(c_1) =  a_{1} +t!d, $$
where $t$ is larger than any number mentioned so far in the construction.
 We have to also correct $\theta$ on other elements, but we omit details.
 It is important to note that, if $d$ is indeed independent of $a_0$, then so is $a_{1} +t!d$; furthermore  $a_{1} +t!d$ and $d$ will have equal linear spans over~$a_0$. Otherwise, 
 the strategy will be repeated with a fresh $d'$, and then perhaps $d''$ (etc.) until a $d^{(k)}$ truly independent over $a_0$ is found.
In particular, it follows that this process of correcting mistakes will eventually stabilise.

We call Nurtazin's strategy involving $t!$ ``the factorial trick"; see the surveys \cite{Khi,melbsl} and also \cite{GLSOrdered,HMM} for more about this strategy, its applications, and variations.  
Of course, the strategy can be extended to cover arbitrary  collections of $c_i$, not only two of them.
We address several questions that can potentially puzzle the reader if whey are not familiar with this or similar method.

\smallskip

\noindent \emph{Why do we need the factorial?} It is necessary to preserve the relations. We may have already defined $\theta$ on $x$ such that
$$m\theta(x) = m_0 a_0+ m_1 a_1,$$ where $m$ does not divide the $GCD(m_0, m_1).$ 
The new image of $x$ will be $$m_0 a_0+ m_1 (a_1 + t! d) = (m_0 a_0+ m_1 a_1) + m_1 t! d,$$
where the former summand  $(m_0 a_0+ m_1 a_1)$ is divisible by $m$ as witnessed by the previous image of $x$, and  $m_1 t! d$ is divisible by $m$ because $m< t$ (recall $t$ is large). 
In particular, the relation $m x = m_0 c_0+ m_1 c_1$ will be preserved under $\theta$.
 Of course, we do not have to use $t!$ here,  but since we do not necessarily worry about the efficiency of our algorithm we can just as well stick with the factorial.

\smallskip

\noindent \emph{How do we make $\theta$ an onto homomorphism?} 
Surjectivity can be achieved by specifically putting the $s$-th element into the range of $\theta$ at stage $s$. Alternatively, we can search for $a_1$ with the least possible index (in $A$) such that $a_1$ is linearly independent of $a_0$. This will guarantee that $(c_i)_{i \in \NN}$ is mapped to a basis of $A$. Since every element of $A$ can be expressed as a linear combination of 
basic elements over the maximal torsion $T(A)$, all we need to do is to make sure that each such combination has a pre-image. 
 The ``factorial trick'' will preserve any relation mentioned so far, and this ensures that $\theta$ respects the group structure.

\smallskip

\noindent \emph{Why is $\theta$ injective?} This is perhaps the most subtle question which was furthermore almost completely overlooked in the original compressed proof in~\cite{feebleDobrica}. We  must argue, by induction, that no new relations are introduced in the process of correcting $\theta$.  This will be done in Claim~\ref{claim:injective}; note that the proof of the claim is not entirely trivial.

\subsubsection{The case of two groups $A_0$ and $A_1$.}
Now assume we have only two groups in the sequence, $A_0 \leftarrow_{\phi_1} A_1$, and recall that the kernel of $\phi_1$ is finite and given by its strong index. In particular, we can assume that $Ker \, \phi_1$ is already known at stage $0$. In this case, the idea is to  simultaneously build computable presentations $B_0$ and $B_1$ of $A_0$ and $A_1$, respectively,  approximate $\Delta^0_2$ isomorphisms $\theta_0: B_0 \rightarrow A_0$ and $\theta_1: B_1 \rightarrow A_1$,  and at every stage maintain $\phi_1 \theta_1 = \theta_0$.
We also build computable bases $C_0$ and $C_1$ in $B_0$ and $B_1$, respectively.

 By Claim~\ref{easy:claim}, Nurtazin's strategy can be used in $A_1$, while the change of $\theta_0$ is done  by correcting it via  $\phi_1$. 
The brute-force proof of  Claim~\ref{easy:claim} can help the reader to see why this is possible. In that proof, we used a ``large enough'' coefficient $d>0$ to transfer linear combinations between the two groups; see Remark~\ref{rem:kerd}. In the present proof, it is sufficient to use $d$ larger than the order of the kernel of $\phi_1$ when we compare our current best approximations to linear independence in $A_0$ and $A_1$. 
For instance, we will never have to correct $\theta_0$ independently of $\theta_1$ (or vice versa). Then we have to argue, by induction, that the maps $\theta_1$ and $\theta_0$ do not have to be corrected on torsion elements. Thus,  since $Ker \, \phi_1$ is finite, we will conclude that the induced map $\psi_1 : B_1 \rightarrow B_0$ defined by the rule $\psi_1 = \theta_0 \phi_1 \theta_1$
is computable and we can compute the strong index of its finite kernel; this is because $\theta_0$ is corrected if, and only if, $\theta_1$ is corrected.

\subsubsection{The general case.} In this case we have to deal with an infinite sequence $A_0 \leftarrow_{\phi_1} A_1  \leftarrow_{\phi_2} A_2   \leftarrow_{\phi_3} \ldots$, but at every stage 
we work  only with finitely many of these $A_i$. We will correct $\theta_i$ for the largest $i$ attended so far in the construction, and we will use $\phi_j$, $j \leq i$, to correct $\theta_j$ for $j<i$. The only difference is that we will have to correct $\theta_i: B_i \rightarrow A_i$ so that the indices of the new images of linearly independent elements in $A_0$ (not $A_i$) are smallest possible. 
This is because we need to argue that in some of these $A_i$ the image of $C_i \subseteq B_i$ is indeed \emph{maximal} linearly independent. 
In contrast with the case of only two groups described  above, we cannot just work with the largest~$i$, since this largest $i$ will  keep increasing at later stages. Nonetheless, 
Claim~\ref{easy:claim}
essentially says that this minimality of indices can be checked in $A_0$, since independence in $A_i$ is effectively coherent with independence in $A_0$.
The rest is similar to the case of only two groups described above. For instance, we need strong indices of the finite kernels of $\phi_i$ to make this work. 

\subsection{Formal proof  of Proposition~\ref{uber-Dobritsa}}

\subsubsection{Notation, conventions, and terminology}\label{subs:notat} 
At the end of stage $t-1$ we initiate only the enumeration of  $B_i$ for  $ i < t $.  We label
elements of discrete computable groups  with natural numbers; a natural number corresponding to an element will be called the index of the element.
Without loss of generality, we can assume that the kernels of $\phi_i$, $ i \leq t$, are already computed.
Furthermore, we can assume that the order of Ker $\phi_i$ is at most $i$; to do that we effectively replace the system $(A_i,\phi_i)$ with a new system where some of the projections are the identity.

In the construction,   we write $\xi[t]$ to denote the value of any parameter $\xi$ at the end of stage $t$.
We list $B_i$ so that  each $x\in B_i[t]$ is of the form $mx = \sum_{j<t} n_j c_{i,j} + d$, where  $m, n_j \leq t$ and $d$ is torsion such that the subgroup generated by $d$ lies in $B_i[t]$. We call such a $d$ \emph{torsion as seen at stage $t$}.
  At the end of stage $t-1$, we have $\theta_i[t-1]$ defined on all elements of $B_i[t]$, including $c_{i,j}$, where $i\leq t$ and  $j<t$.
   
  \

\subsubsection{The requirements} We build a uniformly computable sequence $(B_i)$ of discrete groups such that the uniformly computable set  $C_i = (c_{i,j})_{j \in \NN}$ (of indices in $B_i$) is a maximal linearly independent set in $B_i$. We also define $\psi_i: B_{i} \rightarrow B_{i-1}$ so that $\psi_i(c_{i,j}) = c_{i-1, j}$.  
At every stage $t$, we also define  partial maps $\theta_i[t]: B_i[t] \rightarrow A_i[t]$  and $\psi_i[t]: B_i[t] \rightarrow B_{i-1}[t]$ which satisfy:
$$\psi_i[t] = \theta^{-1}_{i-1}[t] \circ \phi_i[t] \circ \theta_i[t],$$
where defined. It is sufficient to meet, for every $i$, the requirements:

\

$L_i: (\forall b \in B_i) (\exists t_0) (\forall t> t_0) \theta_i(b)[t] = \theta_i(b)[t_0];$

\smallskip

$I_i: \theta_i = \lim_t \theta_i[t]: B_i \rightarrow A_i \mbox{ is an isomorphism;}$

\smallskip

$R_i: C_i \mbox{ is maximal linearly independent in $B_i$;}$

\smallskip

$P_i: \psi_i: B_i \rightarrow B_{i-1} \mbox{ is computable, uniformly in $i$;}$

\smallskip

$K_i: Ker\, \psi_i \mbox{ has computable strong index, uniformly in $i$}.$

\

The construction is perhaps best viewed as a \emph{movable markers} argument,  where each movable marker corresponds to $\theta_i(b) \in A_{i-1}$ for some $b \in B_i$.
We will however not make movable markers explicit since the main complexity of the proof is not related to recursion-theoretic combinatorics.

\subsubsection{The notion of $s$-independence} Elements $g_1, \ldots, g_k$ of an abelian group $G$ are \emph{s-independent} if, for any choice of $n_1, \ldots,n_k \in \mathbb{Z} $ such that $|n_i| \leq s$ $$n_1g_1 + \ldots +n_k g_k = 0 $$ 
implies $n_i =0$ for all $i$.  Clearly, elements are linearly independent if, and only if, they are $s$-independent for all $s$.

\begin{remark}\label{rem:inj}
Note that $g_1, \ldots ,g_k$ are $s$-independent if, and only if, the map $n_1g_1 + \ldots +n_k g_k \rightarrow (n_1, \ldots, n_k) \in \bigoplus_{1 \leq i \leq k} \mathbb{Z}$, $|n_i | \leq s$, is injective.
\end{remark}

\

\subsubsection{The  construction. }

At stage $0$,  begin with  $C_0  =\emptyset$ in $B_0$, and let $B_0$ copy $A_0$ via $\theta_0[0] = Id$, i.e., without any nontrivial permutation. 

\smallskip

    \emph{Stage t.} We subdivide the stage into several phases:
\begin{itemize}
\item[(a)] Choose $k<t$ largest such that $\theta_i(c_{i,0})[t-1], \ldots, \theta_i(c_{i,k})[t-1]$ are $2(t+1)!$-independent  in $A_{i}$, $i \leq t$.
For every $i\leq t$, choose  $d_{i, k+1}, \ldots d_{i, t} \in A_i$ and $a_{t,0}, \ldots, a_{t,k} \in A_t$ such that:
 \begin{enumerate}

\item[(a.1)]  $\theta_i(c_{i,0})[t-1], \ldots, \theta_i(c_{i,k})[t-1], \, \theta_i(c_{i, k+1})[t-1] + t! d_{i, k+1}, \ldots, \theta_i(c_{i, t-1})[t-1] +t! d_{i, t-1}$ together with $d_{i,t}$ form a $2 (t+1)!$-independent set;
\item[(a.2)]   
 $\phi_t (a_{t,r})  = \theta_{t-1}(c_{t-1,r}), r \leq k;$

\item[(a.3)] $d_{i-1,j} = \phi_i(d_{i, j})$, for each $1 \leq i \leq t $ and $k<u \leq t$;

 \item[(a.4)] $d_{0, k+1}, \ldots d_{0, t} \in A_0$ have the smallest possible indices (lexicographically).
\end{enumerate}

\item[(b)] For each $j \leq t$ introduce $c_{t,j}$, and for every $i \leq t$ introduce $c_{i,t}$, and declare: 

\begin{enumerate}

\item[(b.1)] $\theta_i(c_{i,t})[t] = d_{i,t}$;

\item[(b.2)] $\theta_t(c_{t,j})[t] = a_{t,j}$, $j \leq k$ (see (a) for the definition of $a_{t,j}$).

\end{enumerate}

\item[(c)] 
  Redefine $\theta_i$  on each $c_{i,r}$, where  $i \leq t$,  $k<r<t$, by setting $$\theta_i(c_{i,r})[t] = \theta_i(c_{i, r})[t-1] +t!d_{i,r}. $$
 Declare $\theta_i(c_{i,u})[t] =  \theta_i(c_{i,u})[t-1]$ for every $u \leq k$. 

\item[(d)]
For each $i \leq t$ and $x\in B_i[t-1]$ such that  $mx = \sum_{j\leq t} n_j c_{i,j} + d$, where $d$ is torsion such that the subgroup generated by $d$ lies in $B[t-1]$, set $$\theta_{i}(x)[t] = 
\theta_{i}(x)[t-1] +  \sum_{t \geq r>k} \dfrac{n_r t!}{m} d_{i,r}[t].$$

\

\item[(e)] For every $x \in A_t$ such that $mx = \sum_{j\leq k} n_j a_{t,j} + \sum_{k< j\leq t} n_j d_{t,j} + d$,
where $|n_j| \leq t$ and $d$ is torsion as seen in $A_{t}[t]$, if $x$ does not already have a $\theta_t$-preimage, introduce a new element $b$ in $B_t$
 and define
$\theta_t(b) = x$.
In particular, every torsion $d$ as seen in $A_{t}[t]$ will get a pre-image $d'$ in $B_t$ under $\theta_t$.
In $B_t$, declare $$mb =  \sum_{j\leq t}  n_j c_{t,j} + d',$$
where $d' = \theta_t(d)[t]$. 

\

\item[(f)] For $i<t$, if $\theta_{i+1}[t]$ has already been extended, then extend the domain and range of $\theta_i[t]$ as follows.
For every element $a \in range \, (\phi_{i+1}[t] \circ \theta_{i+1}[t]) \setminus range \, (\theta_i[t])$, introduce a new element $b \in B_i$ and declare
$\theta_i(b)[t] = a$. Define the operation on $B_i[t]$ to be the one induced from $A_i$ via $\theta_i[t]$.

\end{itemize}
Finally, set $\psi_i[t] = \theta^{-1}_{i-1}[t] \circ \phi_i[t] \circ \theta_i[t]$, $i \leq t$ and go to the next stage.

\

\subsubsection{The verification.} 
Note that, in (f), each new element $b$ in $B_i[t]$ satisfies 
$mb =  \sum_{j\leq t}  n_j c_{i,j} + d'$ for some $n_j \leq t$ and $d'$ torsion as seen in $B_i[t]$. This is because $d_{i-1,j} = \phi_i(d_{i, j})$, for each $1 \leq i \leq t $ by $(a.3)$,
and in $(b)$ and $(c)$ the $\theta_i[t]$-images of $c_{i,j}$ were specifically defined to agree with  $(a.3)$ and, thus, with $\phi_i$. 
In other words, the instructions in (f) can be equivalently re-phrased in terms similar to the instructions in (e), but we chose a more compact presentation.
This in particular justifies the convention stated in \S \ref{subs:notat} about the form of each $x\in B_i[t]$ at every stage $t$.

 We need to check that every search initiated at stage $t$ eventually terminates.

\begin{claim}
Every stage of the construction eventually terminates.
\end{claim}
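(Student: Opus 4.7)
My plan is to observe that the only real search in stage $t$ happens in phase (a); phases (b)--(f) perform only a bounded number of assignments, enumerations over finite subsets of $A_t[t]$ and $\range(\phi_{i+1}\circ\theta_{i+1}[t])$, and other bookkeeping operations that trivially halt once the data produced by (a) is available. So termination reduces to showing that the three searches in (a) succeed: computing $k$, producing the elements $a_{t,0},\ldots,a_{t,k}\in A_t$, and producing the $d_{i,j}\in A_i$ for $i\le t$ and $k<j\le t$.

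The first two are straightforward. For any candidate value $k<t$ and any $i\le t$, $2(t+1)!$-independence of the specified $k+1$ elements in the computable discrete group $A_i$ is decidable, being a finite check on integer combinations with coefficients bounded in absolute value by $2(t+1)!$; testing $k=t-1,t-2,\ldots$ in turn, we stop at the first success (and $k=-1$, corresponding to the empty tuple, trivially works). The elements $a_{t,r}$ are found by enumerating $A_t$ and waiting for a preimage of $\theta_{t-1}(c_{t-1,r})[t-1]$ under the computable surjection $\phi_t$ to appear.

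The main obstacle is constructing the $d_{i,j}$'s. The key observation is a lifting principle for $s$-independence: for any group homomorphism $\phi\colon A\to A'$, if $\phi(a_1),\ldots,\phi(a_m)$ are $s$-independent in $A'$, then $a_1,\ldots,a_m$ are $s$-independent in $A$, since any vanishing combination $\sum n_j a_j=0$ with $|n_j|\le s$ projects to a vanishing combination with the same coefficients. Using the invariant $\phi_i\theta_i(c_{i,j})=\theta_{i-1}(c_{i-1,j})$ maintained by the construction, this reduces the search to $A_0$: find $d_{0,k+1},\ldots,d_{0,t}\in A_0$ making the $A_0$ instance of (a.1) hold, then for each $i\ge 1$ and $k<j\le t$ lift $d_{i-1,j}$ to any $d_{i,j}\in \phi_i^{-1}(d_{i-1,j})$, which exists by surjectivity of $\phi_i$. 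The tuple appearing in (a.1) at level $i$ then projects componentwise via $\phi_i$ to the corresponding tuple at level $i-1$, so iterating down to $A_0$ the lifting principle delivers $2(t+1)!$-independence at every level automatically.

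Finally, to exhibit suitable $d_{0,j}$'s: since we are in the case where the rank of $A_0$ is infinite and the $\ZZ$-span of $\theta_0(c_{0,0})[t-1],\ldots,\theta_0(c_{0,t-1})[t-1]$ has finite rank, we can pick $t-k$ elements $y_{k+1},\ldots,y_t\in A_0$ such that the full collection $\{\theta_0(c_{0,j})[t-1]\}_{j<t}\cup\{y_{k+1},\ldots,y_t\}$ is $\ZZ$-linearly independent. Setting $d_{0,j}:=y_j$ for $k<j\le t$, a short calculation shows that the tuple appearing in the $A_0$ version of (a.1) is $\ZZ$-linearly independent, hence in particular $2(t+1)!$-independent. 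Since the predicate is decidable and a witness exists, the lexicographic enumeration of tuples of indices in $A_0$ prescribed by (a.4) terminates, which completes the argument.
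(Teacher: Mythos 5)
Your overall route matches the paper's: termination reduces to the existence of witnesses for the searches in phase (a), and your lifting observation (that $s$-independence pulls back along the projections, so it suffices to secure (a.1) in $A_0$ and then lift the $d$'s level by level using surjectivity of the $\phi_i$ and the invariant $\phi_i\theta_i(c_{i,j})=\theta_{i-1}(c_{i-1,j})$) is correct and is in the spirit of the paper's remark that independence ``can be checked in $A_0$''. However, the step where you exhibit the level-$0$ witness fails as stated: you ask for $y_{k+1},\ldots,y_t$ such that the \emph{full} collection $\{\theta_0(c_{0,j})[t-1]\}_{j<t}\cup\{y_{k+1},\ldots,y_t\}$ is $\ZZ$-linearly independent. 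But in the only situation where the $d_{i,r}$ with $r<t$ are needed, namely $k<t-1$, the elements $\theta_0(c_{0,0})[t-1],\ldots,\theta_0(c_{0,k+1})[t-1]$ are genuinely dependent: the failure of $2(t+1)!$-independence at whatever level caused $k$ to drop below $t-1$ projects, with the same bounded coefficients, to a nontrivial relation in $A_0$. Hence no choice of the $y$'s can make your full collection independent, and the witness you assert does not exist --- precisely in the case the correction mechanism is designed for. What you actually need (and what your ``short calculation'' implicitly uses) is weaker and true: choose $y_{k+1},\ldots,y_t$ linearly independent \emph{over the span of} $\{\theta_0(c_{0,j})[t-1]:j<t\}$, which is possible because that span has finite rank while $A_0$ has infinite rank; then in any bounded relation among the (a.1) tuple the $y$-coefficients must vanish, and the surviving relation among $\theta_0(c_{0,0})[t-1],\ldots,\theta_0(c_{0,k})[t-1]$ dies by the $2(t+1)!$-independence that defines $k$. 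So the defect is local and easily repaired, but since existence of the witness is the crux of termination, the claim as written is not proved.

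A smaller omission: you dismiss phases (b)--(f) as bookkeeping, whereas the paper devotes half of its (admittedly brief) proof to (d), where the element $\frac{n_r t!}{m}\, d_{i,r}$ must actually exist in the group; this is a well-definedness issue rather than a search, and it holds only because the convention $m\le t$ makes the coefficient $\frac{n_r t!}{m}$ an integer. A sentence to this effect would make your reduction of termination to phase (a) airtight.
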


\begin{proof}
In (a), we search for elements in $A_t$ which are $2(t+1)!$-independent, and whose $\psi_t$-images are $2(t+1)!$-independent in $A_t$. If $\theta_i(c_{i,0}), \ldots, \theta_i(c_{i,k})$ are indeed independent, then such elements must exist because the rank of $A_t$ is infinite. Such elements (independent or not) will eventually be found.

In (d), for each $x$ such that $mx = \sum_{j\leq t} n_j c_{i,j} + d$, where $d$ is torsion such that the subgroup generated by $d$ lies in $B[t-1]$, we set $$\theta_{i}(x)[t] = 
\theta_{i}(x)[t-1] +  \sum_{t \geq r>k} \dfrac{n_r t!}{m} d_{i,r}[t];$$
such an element exists because $m<t$ by our convention (see \S\ref{subs:notat}).
\end{proof}

 \begin{claim}\label{claim:injective}
 At the end of every stage $t$, each $\theta_i[t]$ is injective.
 \end{claim}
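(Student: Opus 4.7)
The argument will go by induction on $t$, with the base case $t = 0$ immediate since $\theta_0[0] = \mathrm{Id}$. For the inductive step, assuming $\theta_j[t-1]$ is injective for all relevant $j$, I need to verify that the phases (a)--(f) of stage $t$ do not introduce any collisions. There are two things to check: that the updated map does not identify two old elements of $B_i[t-1]$, and that the new elements introduced in (b), (e), (f) receive images not already in the range.

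For the first point, the initial step is to confirm that the simultaneous update of (c)--(d) defines a consistent homomorphism on $B_i[t-1]$. Explicitly, for $x$ with $mx = \sum_j n_j c_{i,j} + d$ (with $m, n_j \leq t$ per the convention of \S\ref{subs:notat}), the correction $\sum_{r>k} (n_r t!/m)\, d_{i,r}$ prescribed in (d) is a well-defined element of $A_i$ because $m \leq t$ divides $t!$; moreover, $m$ times this correction exactly matches the induced change from the update $\theta_i(c_{i,r})[t] = \theta_i(c_{i,r})[t-1] + t!\, d_{i,r}$ in (c). Hence $\theta_i[t]$ remains a homomorphism on $B_i[t-1]$, and injectivity reduces to triviality of its kernel. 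Take $z$ in the kernel with canonical representation $mz = \sum_j n_j c_{i,j} + d$; the subgroup generated by $d$ lies in the finite set $B_i[t]$, so its order $T$ is bounded by a function of $t$ that grows far more slowly than $2(t+1)!/t$. Applying $\theta_i[t]$ and multiplying by $T$ gives $\sum_j T n_j \theta_i(c_{i,j})[t] = 0$ with $|T n_j| \leq T t \leq 2(t+1)!$ for large enough $t$. The $2(t+1)!$-independence of the updated images, guaranteed by (a.1), then forces $Tn_j = 0$, hence $n_j = 0$. Thus $mz = d$ is torsion, so $z$ is torsion, and the explicit bijective correspondence between torsion elements of $B_i[t]$ and those of $A_i[t]$ set up in (e)--(f) gives $z = 0$.

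For the second point, the phases (e) and (f) introduce a new preimage only when the target is not already in the range, ruling out collision with old images. The new generator images $d_{i,t}$ introduced via (b.1) belong to the $2(t+1)!$-independent family of (a.1), so they are distinct from every image of a nontrivial bounded-coefficient combination of older generators, hence from every previously assigned image on a non-torsion element; torsion cases are covered by the bijection just mentioned. The elements $a_{t,j}$ introduced via (b.2) are pairwise distinct because their projections $\phi_t(a_{t,j}) = \theta_{t-1}(c_{t-1,j})[t]$ are distinct by the old-elements argument applied at index $t-1$, combined with the surjectivity of $\phi_t$ on the targeted fibers and the fact that these $a_{t,j}$ cannot coincide with any $\theta_t$-image of a non-torsion element of $B_t[t-1]$ by the independence assertion of (a.1) extended to include them via (a.2).

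The main obstacle is arranging the quantitative comparison $T \cdot t \leq 2(t+1)!$ between the torsion order $T$ of $d$ and the $2(t+1)!$-independence slack. This hinges on interpreting the "torsion as seen at stage $t$" convention as delivering a bound on $T$ that is dominated by $2(t+1)!/t$; since $B_i[t]$ is finite at every stage, such a bound is available in principle, but to make the argument go through cleanly one should ensure the construction enumerates torsion witnesses early enough relative to the factorial slack, either by tracking torsion orders explicitly alongside the generators or, equivalently, by postponing the inclusion of a torsion element $d$ until a witness of its order has been observed below the required threshold.
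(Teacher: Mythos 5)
Your overall plan coincides with the paper's: induction on $t$, with the $2(t+1)!$-independence of (a.1) controlling the redefinition in (c)--(d), and the observation that (e)--(f) only assign preimages to targets not already in the range. However, there are two genuine gaps. First, the reduction of injectivity on $B_i[t-1]$ to ``triviality of the kernel'' is not available in this setting: at stage $t$ each $B_i[t]$ is only a finite partial group (an element comes with its single canonical relation $mx=\sum_j n_j c_{i,j}+d$ as in \S\ref{subs:notat}), so for $x\neq z$ with $\theta_i(x)[t]=\theta_i(z)[t]$ the difference $x-z$ need not exist in the structure, and ``injective iff trivial kernel'' fails for such partial maps. The paper's proof works directly with the pair: it writes the two canonical relations, multiplies them by $nt!$ and $mt!$ (which annihilates the torsion parts, whose orders are at most $t$ by the stage conventions), subtracts inside $A_i$, and uses (a.1) to force the two correction terms $\sum_{r>k}\frac{m_r t!}{m}d_{i,r}$ and $\sum_{r>k}\frac{n_r t!}{n}d_{i,r}$ to agree, whence $\theta_i(x)[t-1]=\theta_i(z)[t-1]$ and the inductive hypothesis gives $x=z$. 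Your kernel computation can be repaired into exactly this pairwise form, but as written the reduction step is unjustified, and note that the repaired argument never needs a separate appeal to a torsion correspondence.

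Second, the obstacle you flag at the end is a real incompleteness of your write-up, not of the construction: you multiply by the torsion order $T$ and need $Tt\le 2(t+1)!$, and you propose to secure this by altering the construction (tracking torsion orders, postponing torsion elements). The claim concerns the construction as given, so a proof that requires modifying it does not establish it. In the paper no such modification is needed: by the conventions of \S\ref{subs:notat} the torsion elements occurring in stage-$t$ canonical forms have order at most $t$, and the relations are multiplied through by multiples of $t!$, which kills the torsion outright before independence is invoked, so no comparison between $T$ and the independence slack arises. Relatedly, your final step ``$z$ is torsion, hence $z=0$'' silently uses that $\theta_i$ is never redefined on torsion elements (so injectivity on torsion is inherited from $\theta_i[t-1]$); this is true and is exploited later in the paper, but ``the bijective correspondence set up in (e)--(f)'' is not by itself a justification. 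A minor slip: $B_t[t-1]$ is empty, since $B_t$ is first attended at stage $t$, so your collision analysis for the $a_{t,j}$ should be phrased against the elements introduced at stage $t$ itself.
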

 
 \begin{proof} 
 By induction on $t$. At stage $0$, $\theta_0[0]$  is injective since it is the identity map.
 Suppose $\theta_i[t-1]$ is injective. We suppress $i$ throughout (when possible).
 Every element $x$ of $B[t-1]$ satisfies 
 a unique reduced relation $mx = \sum_j m_j c_j + d$, where $d$ is torsion (as already seen in $B[t]$)  and $m, m_i \in \mathbb{Z}$ are reduced.

 We first show that redefining~$\theta$ in (c) and (d) preserves injectivity of $\theta$.
 Suppose $x, z \in B[t-1]$ and therefore $\theta[t-1]$ is defined on $x,z  \in dom \, \theta[t-1]$, where   $$n z = \sum_j n_j c_j + l,$$
$$mx = \sum_j m_j c_j + d.$$
  If $\theta[t-1]$ and $\theta[t]$ are equal on the domain of $\theta[t-1]$ then there is nothing to prove.
Suppose $\theta$ needs to be redefined. If  $\theta(z)[t] = \theta(x)[t]$, 
 then 
in (d) we define $\theta(x)[t]= \sum_{r>k} \dfrac{m_r t!}{m} d_{r} + \theta(x)[t-1]$, and we declare
$\theta(z)[t]$  equal to $w = \sum_{r>k} \dfrac{n_r t!}{n} d_{r} + \theta(x)[t-1]$. These values satisfy the equations:

      $$m\theta(x)[t]  = \sum_{j \leq k} m_j \theta (c_j)[t-1] + \sum_{r>k} m_r (\theta(c_r)[t-1] + t! d_r) +\theta(d)[t], $$
  $$n\theta(z)[t]  = \sum_{j \leq k} n_j \theta (c_j)[t-1] + \sum_{r>k} n_r (\theta(c_r)[t-1] + t! d_r) +\theta(l)[t] .$$
(Note that, in fact, $\theta(d)[t] = \theta(d)[t-1]$ and $\theta(l)[t] = \theta(l)[t-1]$, but this is not important for this particular argument.)
The orders of $d$ and $l$ are at most $t$, and $m, n \leq t$. Multiply the first equation by $n t!$ and the second by $m t!$, and then subtract the first one from the second one.
By (a.1) at stage t, the values $\theta (c_j)[t-1]$ and $\theta(c_r)[t-1] + t! d_r$ form a $2 (t+1)!$-independent set. 
In particular, it must be that, for every $r >k$, $t! n m_r = t! m n_r$, and since both $m,n \neq 0$, we arrive at $$\dfrac{m_r t!}{m} = \dfrac{n_r t!}{n}, \,\mbox{ for each }  r> k. $$
Now recall that 
 $\theta(x)[t]= \sum_{r>k} \dfrac{m_r t!}{m} d_{r} + \theta(x)[t-1]$ and  $\theta(z)[t]= \sum_{r>k} \dfrac{n_r t!}{n} d_{r} + \theta(x)[t-1]$.
Since $\sum_{r>k} \dfrac{n_r t!}{n} d_{r} = \sum_{r>k} \dfrac{m_r t!}{m} d_{r} $ by the above remarks,  and $\theta(z)[t] = \theta(x)[t]$ by our assumption,
we must have that  $$\theta(z)[t-1] = \theta(x)[t-1].$$
Since $\theta[t-1] $ is injective by the inductive hypothesis,  $z =x$.

It remains to argue that injectivity is maintained when we extend the domain of $\theta_i$ in $(e)$ and $(f)$. In other words, now suppose 
$\theta_i(z)[t] = \theta_i(x)[t]$, where at least one of $x$ and $z$ does not belong to $dom \, \theta_i[t-1]$ and thus was introduced in (e) or (f).
Recall that in (a.1) we chose  $d_{i,t}$  to be $2(t+1)!$-independent over  $\theta_i (c_{i,j})[t-1]$ and $\theta_i(c_{i,r})[t-1] + t! d_{i,r}$.
 In (e), where $i =t$, different choices of coefficients $n_j \leq t$ and torsion elements $d$ in $A_t$  will result in different linear combinations  in $A_t$; cf.~Remark~\ref{rem:inj}.
In other words,  our actions in (e) preserve injectivity because if they did not, then this would violate $2(t+1)!$-independence  in (a.1).
For $i<t$, recall the instructions in (f).
Equality of elements is decidable in $A_i$, and we  adjoin new elements to $B_i$ only if their respective images in $A_i$ are not equal.
In other words, (f) preserves injectivity by construction.  \end{proof}

  \begin{claim}\label{claim:stable}
For every $x \in B_i$, $\lim_t \theta_i(x)[t]$ exists. (The requirement $L_i$ is met.)
 \end{claim}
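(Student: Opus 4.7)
The plan is to prove stabilization first for the distinguished generators $c_{i,j}$ and then leverage the update rule in~(d) to handle arbitrary elements of $B_i$. Concretely, I proceed by induction on $J$ to show that $\theta_0(c_{0,J})[t]$ stabilizes as $t\to\infty$ and that the stable values $\tilde c_0,\tilde c_1,\ldots$ are linearly independent in $A_0$. The intertwining $\phi_i\circ\theta_i=\theta_{i-1}\circ\psi_i$ is maintained at every stage by (a.2)--(a.3) and (b.1)--(b.2), so $\theta_i(c_{i,j})$ projects to $\theta_0(c_{0,j})$ along $\phi_1\circ\cdots\circ\phi_i$. Since each $\phi_i$ has finite kernel, Claim~\ref{easy:claim} lifts true linear independence of $\tilde c_0,\ldots,\tilde c_{J-1}$ in $A_0$ to true linear independence of $\theta_i(c_{i,0})[t],\ldots,\theta_i(c_{i,J-1})[t]$ in $A_i$ at every large enough~$t$; these are then $s$-independent for every~$s$, forcing the integer $k$ computed in~(a) to satisfy $k\geq J-1$ for all $t$ beyond some point, and by~(c) the values $\theta_i(c_{i,j})[t]$ are no longer modified for $j<J$.

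For the inductive step, fix $J$ and assume $\tilde c_0,\ldots,\tilde c_{J-1}$ as above. Let $n^*$ be the least index in $A_0$ such that $a_{n^*}$ is linearly independent from $\tilde c_0,\ldots,\tilde c_{J-1}$; such an $n^*$ exists because the torsion-free rank of $A_0$ is infinite. I claim that for $t$ sufficiently large, whenever $c_{0,J}$ is updated at stage~$t$ (that is, $k=J-1$ in~(a)), the lexicographic minimization in~(a.4) selects $d_{0,J}=a_{n^*}$. The index $n^*$ always admits a valid extension, since the infinitude of rank lets us pick $d_{0,J+1},\ldots,d_{0,t}$ from truly independent elements past $n^*$. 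For each $n<n^*$, the element $a_n$ is linearly dependent on $\tilde c_0,\ldots,\tilde c_{J-1}$ with a fixed witnessing relation whose coefficients are bounded by an absolute constant. Moreover, whenever $c_{0,J}$ is being updated, $\theta_0(c_{0,J})[t-1]$ must lie in the $\mathbb{Q}$-span of $\tilde c_0,\ldots,\tilde c_{J-1}$, since otherwise it would be $s$-independent from them for every~$s$, contradicting the reason for updating. Because $\theta_0(c_{0,J})[t-1]$ accumulates at most $t$ past shifts of the form $t_i!\,a_{n_i}$ with $n_i\leq n^*$, after clearing a uniformly bounded denominator its integer coefficients in the basis $\tilde c_0,\ldots,\tilde c_{J-1}$ grow only as $O(t!)$. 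Combining this with the fixed dependence of $a_n$ yields an integer relation of coefficient size $O(t!)\leq 2(t+1)!$ witnessing that $\theta_0(c_{0,J})[t-1]+t!\,a_n$ is not $2(t+1)!$-independent from $\tilde c_0,\ldots,\tilde c_{J-1}$. Hence $n^*$ is the smallest index surviving the test, (a.4) selects $a_{n^*}$, and the new value $\theta_0(c_{0,J})[t]=\theta_0(c_{0,J})[t-1]+t!\,a_{n^*}$ is truly linearly independent from $\tilde c_0,\ldots,\tilde c_{J-1}$ (since $a_{n^*}$ is torsion-free and not in their $\mathbb{Q}$-span, while $\theta_0(c_{0,J})[t-1]$ lies in that span); no further update of $c_{0,J}$ is ever triggered.

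Finally, for arbitrary $x\in B_i$, the claim follows from~(d). Since $x$ is introduced at some stage $t_x$, its defining relation $mx=\sum_j n_j c_{i,j}+d$ is fixed, with only finitely many $j$ for which $n_j\neq 0$ and with $m\leq t_x$. By the induction, there is a stage $T$ beyond which the $k$ of~(a) exceeds $\max\{j:n_j\neq 0\}$ at every stage $\geq T$, so the sum $\sum_{t\geq r>k}\frac{n_r t!}{m}\,d_{i,r}[t]$ in~(d) is empty, giving $\theta_i(x)[t]=\theta_i(x)[t-1]$.

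I expect the main obstacle to be the middle paragraph. The delicate point is that the candidate $\theta_0(c_{0,J})[t-1]+t!\,a_n$ depends on the whole history of past updates to $c_{0,J}$, not just on the new shift $a_n$, and one must verify that the accumulated shifts grow only at a rate compatible with the detection threshold $2(t+1)!$. This requires tracking both the denominator needed to write $\theta_0(c_{0,J})[t-1]$ in the $\tilde c_j$-basis (uniformly bounded in terms of the finitely many $p_n$ for which $p_n a_n$ lies in the $\mathbb{Z}$-span) and the magnitude of the resulting integer coefficients; only after this bookkeeping does lexicographic minimization in~(a.4) single out $a_{n^*}$ and halt further updates.
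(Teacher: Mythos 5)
Your proof is correct and follows essentially the same route as the paper's: updates to $\theta_i(x)$ occur only through (c)/(d) when the generators in its support are discovered dependent, and the least-index requirement (a.4) forces the chosen shifts to settle on truly independent elements of $A_0$, which transfer to every $A_i$ via Claim~\ref{easy:claim}. Your explicit bookkeeping (accumulated shifts of total size $O(t!)$ against the detection threshold $2(t+1)!$, with uniformly bounded denominators) simply spells out the step the paper compresses into ``after several iterations of this process we will finally hit the truly independent elements,'' and is accurate up to two trivial points: the base value at the stage where the earlier generators stabilize contributes one more fixed factor to the denominator bound, and in the case where no update of $c_{0,J}$ is ever triggered the independence of the limit follows because a fixed dependence relation would eventually fall below the threshold and force an update.
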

 
 \begin{proof} We use Claim~\ref{easy:claim} throughout.  We suppress $i$.
 If $\theta(x)[t] \neq \theta(x)[t-1]$, then this must be because the action in (c) or (d) of stage $t$: $ \theta(x)[t] =  \sum_r \dfrac{t! n_r}{m} d_{r} + \theta(x)[t-1].$
 This involves only finitely many $d_j$ that correspond to $c_j$ in $mx= \sum_j m_j c_j + d$, where $d$ is torsion as seen in $B[t-1]$. The change occurs if, and only if, the $\theta[t-1]$ images of $c_j$ which occur with $m_j \neq 0$ 
 are discovered to be linearly dependent. In this case, we pick $d_j$ and then set $\theta(c_{j})[t] = \theta(c_j)[t-1] +t!d_{j} $ in (c). This implies that 
 these $d_j$ are linearly independent if, and only if,  the respective elements $\theta(c_{j})[t]$ are linearly independent.
We choose 
 the $d_j$ to be $2(t+1)!$-independent together with some of the $c_j$ which stay $2(t+1)!$-independent at stage $t$. We also choose these elements so that the respective elements in $A_0$ have the least possible indices; see (a.4). After several iterations of this process we will finally hit the truly independent elements in $A_0$ (thus, in $A_i$ by Claim~\ref{easy:claim}) at some late enough stage $s_0$. It follows that    $\lim_t \theta(x)[t]$  exists and is equal to $\theta(x)[s_0]$.
 \end{proof}

 From now on, $\theta_i$ stands for $\lim_t \theta_i[t]$.  
 
 \begin{claim}\label{claim:basis}
 Let $a_{i,j} = \lim_t \theta_i(c_{i,j})[t]$, and let $D_i = \{a_{i,j}: j \in \NN\}$. Then $D_i$ is maximal linearly independent in $A_i$.

\end{claim}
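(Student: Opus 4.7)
My plan starts with the observation that the construction preserves the compatibility $\phi_i(\theta_i(c_{i,j})[t]) = \theta_{i-1}(c_{i-1,j})[t]$ at every stage; this is straightforward to check by tracking clauses (a.3), (b.1)--(b.2), (c), and the propagation in (f). Passing to the limit yields $\phi_i(a_{i,j}) = a_{i-1,j}$. Using the finiteness of the kernel of the composite projection $\phi_1 \circ \cdots \circ \phi_i \colon A_i \to A_0$ together with both directions of Claim~\ref{easy:claim}, one sees that $D_i$ is maximal linearly independent in $A_i$ if and only if $D_0$ is maximal linearly independent in $A_0$; hence it suffices to treat the case $i = 0$.

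I would then prove linear independence of $D_0$ by contradiction. Suppose $\sum_{j \le M} n_j a_{0,j} = 0$ nontrivially with $|n_j| \le N$. Fix $t > \max_{j \le M} s_j$ (stabilisation stages from Claim~\ref{claim:stable}) with $2(t+1)! > N$. Then $\theta_0(c_{0,j})[t-1] = a_{0,j}$ for all $j \le M$, so these values fail $2(t+1)!$-independence, and the largest-$k$ clause of phase (a) yields $k(t) < M$. By the very maximality of $k(t)$, any admissible $d_{0,k(t)+1}$ in (a.1) must contribute nontrivially under multiplication by $t!$, so phase (c) strictly alters $\theta_0(c_{0,k(t)+1})$ at stage $t$. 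But $k(t)+1 \le M$ and $t-1 \ge s_{k(t)+1}$, contradicting stability. The same reasoning run in reverse shows that for every $M$ one has $k(t) \ge M$ past the joint stabilisation of $c_{0,0},\ldots,c_{0,M}$, so $k(t) \to \infty$.

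The main obstacle is maximality of $D_0$. My plan combines two ingredients. First, the invariant of \S\ref{subs:notat}, preserved throughout phases (b), (e), and (f), guarantees that every $b \in B_0$ satisfies $mb = \sum_j n_j c_{0,j} + d$ with $m \ne 0$ and $d$ torsion in $B_0$, so $C_0$ is maximal linearly independent in $B_0$. Second, the limit map $\theta_0 \colon B_0 \to A_0$ is a bijection, so $D_0 = \theta_0(C_0)$ transports this property to $A_0$. Injectivity of $\theta_0$ is immediate from Claim~\ref{claim:injective} combined with Claim~\ref{claim:stable}. Surjectivity is the crux, and I plan to establish it by contradiction. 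Let $x \in A_0$ have the smallest index $\alpha$ among elements outside $\mathrm{span}_{\QQ}(D_0)$. Since $D_0$ is linearly independent and $k(t) \to \infty$, the $\QQ$-span $V_t := \mathrm{span}_{\QQ}(a_{0,0},\ldots,a_{0,k(t)+1})$ eventually absorbs every fixed element of $\mathrm{span}_{\QQ}(D_0)$; for $t$ large, the unique element of index at most $\alpha$ lying outside $V_t$ is $x$ itself. Because the infinite rank of $A_0$ together with the surjectivity of each $\phi_i$ makes $x$ an admissible first component of the tuple in (a.1), the lexicographic minimisation (a.4) forces $d_{0,k(t)+1} = x$ at all sufficiently large $t$. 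Tracing the factorial contributions $t! x$ added to $\theta_0(c_{0,k(t)+1})$ in phase (c) across these infinitely many stages, a careful accounting shows that $x$ would have to be expressible as a $\QQ$-combination of the $a_{0,j}$'s, contradicting $x \notin \mathrm{span}_{\QQ}(D_0)$. Granting surjectivity, applying $\theta_0$ to the $B_0$-invariant produces $my = \sum_j n_j a_{0,j} + \theta_0(d)$ with $\theta_0(d)$ torsion in $A_0$, for every $y \in A_0$, which is the desired maximality.
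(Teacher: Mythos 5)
Your reduction to $A_0$ (via the compatibility $\phi_i(a_{i,j})=a_{i-1,j}$ coming from (a.3) and both directions of Claim~\ref{easy:claim}) and your stability-plus-$2(t+1)!$-independence argument for linear independence of $D_0$ are essentially the paper's argument. The gap is in the spanning half of maximality, which is the heart of the claim. A first, structural, point: your ``surjectivity of $\theta_0$'' step starts from an $x$ outside $\mathrm{span}_{\QQ}(D_0)$, so what you are actually arguing is the spanning statement itself; the detour through maximality of $C_0$ in $B_0$ and bijectivity of $\theta_0$ does no work (and transporting the relation $mb=\sum_j n_j c_{0,j}+d$ through $\theta_0$ silently uses that the limit map is a homomorphism, which in the paper is only established in Claim~\ref{claim:iso}).

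More seriously, the two pivotal steps of your contradiction are unjustified. The admissibility test in (a.1)/(a.4) at stage $t$ is run against the \emph{current} values $\theta_0(c_{0,j})[t-1]$, not against the limit values that define your $V_t$: an element $y$ of index smaller than $\alpha$ that is genuinely dependent on $D_0$ can still pass the $2(t+1)!$-independence check at stage $t$, because its dependence may only be witnessed with coefficients exceeding $2(t+1)!$, or only against limit values the construction has not yet reached ($k(t)$ can exceed the currently stable prefix). So ``(a.4) forces $d_{0,k(t)+1}=x$ at all sufficiently large $t$'' does not follow as stated. And even granting that $x$ is chosen at a stage $t$ after which coordinate $k(t)+1$ never changes, the identity $a_{0,k(t)+1}=\theta_0(c_{0,k(t)+1})[t-1]+t!\,x$ yields $x\in\mathrm{span}_{\QQ}(D_0)$ only if the superseded value $\theta_0(c_{0,k(t)+1})[t-1]$ already lies in $\mathrm{span}_{\QQ}(D_0)$; if it does not (and under your hypothesis $x\notin\mathrm{span}_{\QQ}(D_0)$ it automatically does not, since the two differ by $t!\,x$), no contradiction arises, so the ``careful accounting'' is not a routine bookkeeping step. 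What is missing is exactly the span-tracking ingredient the paper uses: at a suitably chosen stage the maximality of $k(t)$ provides a relation with coefficients at most $2(t+1)!$ in which the old value of the corrected coordinate occurs nontrivially over prefix values that are already final, so old and new values have the same $\QQ$-span over the stable prefix (this is the induction sketched in the footnote to the paper's proof of Claim~\ref{claim:basis}), and only then does the least-index choice in (a.4) force every element of $A_0$ into the span of $D_0$. As written, your proposal defers precisely this invariant, so the maximality half of the claim remains unproved.
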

\begin{proof}
 For each $i$ and every stage $t$, $\{c_{i,j}: j \leq t\}$ are held linearly independent in $B_i[t]$, and $a_{i,j}[t] = \theta_i(c_{i,j})[t]$ are $2(t+1)!$-independent in $A_i$.  By the previous claim, each $D_i$ is linearly independent. When correcting $\theta_i$ in (a), we choose $2(t+1)!$-independent elements $d_{i,j}$ in $A_i$ such that $d_{0, k+1}, \ldots d_{0, t} \in A_0$ have the smallest possible indices (lexicographically) in $A_0$; see (a.1). This means that, in particular, the $a_{i,j}$ which were truly independent remain independent when we redefine them in (c). 
 (This follows from an elementary analysis of their respective linear spans; we leave the elementary details to the reader\footnote{For example, suppose we have $a_0, a_1, a_2$ and we discover that $a_1 \in Span (a_0)$ but $a_2$ still looks independent over $a_0$. Since the index of $d_1$ has to be the least possible, we can have that $d_1 \in Span(a_2)$, and indeed $d_1 = a_2$. But then we will choose $d_2$ to be outside of  $Span\{a_0, d_1\}$. The general case of many $a_i$ is done by a straightforward induction.}.) 
 The minimality of indices also implies that, in
  the limit, every element of $A_0$ will be in the linear span of $D_0$.
  By Claim~\ref{easy:claim}, each $D_j$ is maximal linearly independent in $A_j$. 
 \end{proof}
   \begin{claim}\label{claim:iso}
For each $i$, $\theta_i$ is an isomorphism of $B_i$ onto $A_i$. (The requirement $I_i$ is met.)
 \end{claim}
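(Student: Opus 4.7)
The plan is to verify in turn that $\theta_i = \lim_t \theta_i[t]$ is defined on all of $B_i$, that it is a group homomorphism, that it is injective, and that it is surjective, leaning on Claims~\ref{easy:claim}--\ref{claim:basis} together with a direct inspection of phases~(a)--(f). Totality is immediate from Claim~\ref{claim:stable}: each $b \in B_i$ enters the construction at some finite stage, after which $\theta_i(b)[t]$ stabilizes.

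For the homomorphism property I would show that at every stage $t$ the partial map $\theta_i[t]$ is a homomorphism on $B_i[t]$, and then pass to the limit. The group structure on $B_i[t]$ is defined in phases~(b), (e), and~(f) precisely so as to pull back the operation on $A_i$ via $\theta_i[t]$; that is, a relation $mb = \sum_j n_j c_{i,j} + d'$ is declared in $B_i[t]$ exactly when the corresponding equation $m\theta_i(b)[t] = \sum_j n_j \theta_i(c_{i,j})[t] + \theta_i(d')[t]$ holds in $A_i$ at the moment of introduction. The key point is that the subsequent corrections in phases~(c) and~(d) preserve every such relation: the increment $\sum_{r > k} \frac{n_r t!}{m} d_{i,r}$ added to $\theta_i(b)[t-1]$ multiplies by $m$ (which divides $t!$ since $m \leq t$) to exactly match the sum of $n_r$ times the increments added to each $\theta_i(c_{i,r})[t-1]$ in phase~(c). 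Passing to the limit yields the homomorphism property of $\theta_i$.

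Injectivity in the limit is then immediate from Claim~\ref{claim:injective}: if $\theta_i(x) = \theta_i(z)$, pick $t$ large enough (by Claim~\ref{claim:stable}) so that both values have stabilized, and conclude $\theta_i[t](x) = \theta_i[t](z)$, whence $x = z$ by the injectivity of $\theta_i[t]$.

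Surjectivity is the main obstacle, and it combines Claim~\ref{claim:basis} with the greedy enumeration in phases~(e) and~(f). Given $a \in A_i$, Claim~\ref{claim:basis} supplies $m \neq 0$, finitely many nonzero integers $n_j$, and a torsion element $d \in A_i$ with $ma = \sum_j n_j a_{i,j} + d$. I would locate a stage $t^* \geq i$ at which all the $a_{i,j}$ appearing with $n_j \neq 0$ have stabilized, at which $|n_j|, m \leq t^*$, and at which $d$ is already recognized as torsion. At such a stage, phase~(e) applied in $A_{t^*}$ (to a lift of $a$ under the onto map $\phi_{i+1} \circ \cdots \circ \phi_{t^*}$) introduces a preimage there, and phase~(f) then propagates this preimage down through $B_{t^*-1}, \ldots, B_i$ via the computable surjections $\phi_j$, yielding a $b \in B_i$ with $\theta_i(b)[t^*] = a$. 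Because every $c_{i,j}$ occurring with nonzero coefficient in the relation for $b$ has already stabilized by stage $t^*$, no further correction to $\theta_i(b)$ is triggered by phase~(d) at later stages, and hence $\theta_i(b) = a$ as required.
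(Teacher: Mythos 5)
Your proof is correct and follows essentially the same route as the paper's: injectivity from stagewise injectivity (Claim~\ref{claim:injective}) plus stabilization (Claim~\ref{claim:stable}), the homomorphism property from the pulled-back group structure together with the factorial-trick cancellation in phases (c)--(d), and surjectivity from Claim~\ref{claim:basis} combined with the range extensions in phases (e)--(f). You merely spell out details the paper leaves implicit, in particular that preimages introduced after the relevant $c_{i,j}$'s have stabilized are never moved by later corrections.
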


 \begin{proof} As before, we suppress $i$.
For any pair of elements $x, y \in B$, $x \neq y,$ there is  a stage $s_0$ large enough such that $\lim_t \theta(x)[t] = \theta(x)[s_0]$ and  $\lim_t \theta(y)[t] = \theta(y)[s_0]$. Since $\theta[s_0]$ is injective on its domain, we have that $ \theta(x) \neq \theta(y)$. 
Since at every stage $\theta[t]$ is used to copy a finite part of $A$ into $B$, it evidently respects the operations; cf.~(d).
If $x-y = z$ and this is preserved by $\theta[t]$ for every $s$, then it will also be preserved by $\theta[s_0]$
for $s_0$ so large that $\theta$ is stable on $x,y,z$. By Claim~\ref{claim:basis}, $D_i = \{a_{i,j}: j \in \NN\}$ is maximal linearly independent in $A_i$.
In particular, every element of $A_i$ lies in the linear span of $D_i$. Thus, in $(e)$ and $(f)$, we made sure that any element will eventually be put into the range of $\theta$.
It follows that $\theta$ is surjective.
 \end{proof}
  
   \begin{claim}
For each $i$, $\{c_{i,j}\}$ is maximal linearly independent in $B_i$. (The requirement $R_i$ is met.)
 \end{claim}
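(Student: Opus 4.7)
The plan is to derive this claim immediately from the two preceding claims, since the work has essentially already been done. By Claim~\ref{claim:iso} the map $\theta_i \colon B_i \to A_i$ is a group isomorphism, and by Claim~\ref{claim:basis} the set $D_i = \{a_{i,j} : j \in \NN\} = \{\theta_i(c_{i,j}) : j \in \NN\}$ is a maximal linearly independent subset of $A_i$. It therefore suffices to observe that maximal linear independence is an isomorphism invariant.

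More explicitly, I would argue as follows. Suppose $n_1 c_{i,j_1} + \dots + n_k c_{i,j_k} = 0$ in $B_i$ for some integers $n_1, \ldots, n_k$. Applying $\theta_i$ gives $n_1 a_{i,j_1} + \dots + n_k a_{i,j_k} = 0$ in $A_i$, so by linear independence of $D_i$ each $n_r = 0$. Hence $\{c_{i,j} : j \in \NN\}$ is linearly independent in $B_i$. For maximality, fix any $b \in B_i$ and let $a = \theta_i(b) \in A_i$. By maximality of $D_i$ there exist indices $j_1, \ldots, j_k$ and integers $m, n_1, \ldots, n_k$ with $m \neq 0$ such that $m a + n_1 a_{i,j_1} + \dots + n_k a_{i,j_k} = 0$. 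Applying $\theta_i^{-1}$ yields $m b + n_1 c_{i,j_1} + \dots + n_k c_{i,j_k} = 0$ in $B_i$, which witnesses that $\{c_{i,j} : j \in \NN\} \cup \{b\}$ is linearly dependent.

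Since $(c_{i,j})_{j \in \NN}$ is a uniformly computable sequence of elements of $B_i$ by construction (the indices $c_{i,j}$ are declared in phase (b) of stage $t$ for $j \leq t$), this establishes $R_i$. There is no real obstacle here; the only subtlety, already handled in Claims~\ref{claim:stable}, \ref{claim:basis}, and \ref{claim:iso}, is that the minimality of indices in phase (a.4) forced the limits $a_{i,j}$ to form a \emph{maximal} independent set in $A_0$ (and hence in each $A_i$ via Claim~\ref{easy:claim}), rather than merely an independent set.
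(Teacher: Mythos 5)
Your proposal is correct and matches the paper's own argument exactly: the paper's proof simply cites Claim~\ref{claim:basis} and Claim~\ref{claim:iso}, and your write-up just spells out the (routine) fact that maximal linear independence is transported along the isomorphism $\theta_i$.
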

 
 \begin{proof}
 This follows from Claim~\ref{claim:basis} and Claim~\ref{claim:iso}. \end{proof}
 
 Recall that  $\psi_i = \theta^{-1}_{i-1} \circ \phi_i \circ \theta_i$.
    \begin{claim}
For every $i$, $\psi_i: B_i \rightarrow B_{i-1}$  is a surjective homomorphism uniformly computable in $i$.
The strong index of its finite kernel can be computed uniformly in $i$. (The requirements $P_i$ and $K_i$ are met.)
 \end{claim}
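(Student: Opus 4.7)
The plan is to exploit the fact that, although $\theta_i[t]$ and $\theta_{i-1}[t]$ are corrected many times throughout the construction, their corrections happen in lockstep, so that the composition $\psi_i[t] = \theta_{i-1}^{-1}[t]\circ\phi_i[t]\circ\theta_i[t]$ is in fact \emph{stable}: once $x$ enters $B_i$ at some stage $t_0$, the element $\psi_i(x)[t]\in B_{i-1}$ never changes at any later stage. Granted this stability, uniform computability of $\psi_i$ in $i$ is immediate: on input $x$, wait for $x$ to appear in $B_i[t_0]$ and output $\psi_i(x)[t_0]$.

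The heart of the verification is the following inductive invariant on $t$, for every $x\in B_i[t_0]\subseteq B_i[t]$:
\[
\phi_i\bigl(\theta_i(x)[t]\bigr)\;=\;\theta_{i-1}\bigl(\psi_i(x)[t_0]\bigr)[t].
\]
The inductive step is driven by phases (c) and (d) of stage $t$. Writing $mx=\sum_j n_j c_{i,j}+d$ per the convention of \S\ref{subs:notat}, the update rule from (d) gives $\theta_i(x)[t]=\theta_i(x)[t-1]+\sum_{r>k}\tfrac{n_r t!}{m}d_{i,r}$. Applying $\phi_i$ and invoking condition (a.3) (namely $\phi_i(d_{i,r})=d_{i-1,r}$), the increment becomes $\sum_{r>k}\tfrac{n_r t!}{m}d_{i-1,r}$; but this is precisely the increment that phase (d) at level $i-1$ adds to $\theta_{i-1}(\psi_i(x)[t_0])[t-1]$. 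Hence the invariant propagates and $\psi_i(x)[t]=\psi_i(x)[t_0]$ for all $t\geq t_0$. This synchronization argument is the main technical point; it is what forces us to choose $d_{i,r}$'s compatible under $\phi_i$ in phase (a) and to perform the $t!$-corrections at every level simultaneously in phase (c).

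The remaining properties follow quickly. Since by Claim~\ref{claim:iso} the limit maps $\theta_i,\theta_{i-1}$ are group isomorphisms and $\phi_i$ is a surjective homomorphism by hypothesis, $\psi_i$ is a surjective homomorphism. For the kernel, $\ker\psi_i=\theta_i^{-1}(\ker\phi_i)$ is finite since $\ker\phi_i$ is finite by hypothesis. Moreover every element of $\ker\phi_i$ is torsion (any finite subgroup of an abelian group consists of torsion elements), and in our bookkeeping a torsion element admits the representation $mz=d$ with $m=1$ and all coefficients $n_j$ equal to $0$; the update formula of phase (d) then leaves $\theta_i$ fixed on its $B_i$-preimage. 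Consequently, once every element of $\ker\phi_i$ has materialized in $A_i[t]$ (which we can detect uniformly in $i$, as a strong index for $\ker\phi_i$ is given by hypothesis), the corresponding preimages in $B_i[t]$ are permanent, and reading them off yields a strong index for $\ker\psi_i$ uniformly in $i$.
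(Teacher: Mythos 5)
Your proposal is correct and follows essentially the same route as the paper: you prove the same inductive invariant $\phi_i(\theta_i(x)[t])=\theta_{i-1}(\psi_i(x)[t_0])[t]$, propagated via the compatibility $\phi_i(d_{i,r})=d_{i-1,r}$ of the correction elements (condition (a.3)), to conclude that $\psi_i$ is stable once defined and hence computable, and you handle the kernel exactly as the paper does, by observing that $\theta_i$ is never corrected on torsion elements so the $\theta_i$-preimage of the given finite $\ker\phi_i$ is permanent. The only cosmetic difference is that you cite (a.3) where the paper's text says (a.4) (the paper's reference appears to be a typo), and you make the "wait for $x$ to appear, then output $\psi_i(x)[t_0]$" algorithm explicit.
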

 
 \begin{proof}
 It follows from Claim~\ref{claim:iso} and surjectivity of $\phi_i$ that $\psi_i = \theta^{-1}_{i-1} \circ \phi_i \circ \theta_i$ is a surjective homeomorphism. 
  Recall that we say that $z$ is torsion as seen in $B_i[t]$ if it has order $m$ and $x, 2x, \ldots, (m-1) x$ lie in $B_i[t]$.
Note that $Ker \, \phi_i$ consists of elements torsion within $B_i[i]$, because we  copy the kernel of $\phi_i$ into $B_i$ via $\theta_i[i]$ and identify it with the kernel of $\psi_i$.
Furthermore, by induction, if $z$ is torsion within $B_i[t]$, then we have that 
 $\theta_i(z)[t] = \theta_i(z)$; this is because in (d) we have $\theta(z)[t] \neq \theta(z)[t-1]$ only if $z$ has a non-trivial coefficient in its $c_{i,j}$-expansion.

 To see why $\psi_i$ it is computable,  use induction.  
Let  $s$ be the first stage at which $\hat{x} = \psi_i(x)[s] = \theta^{-1}_{i-1} \circ \phi_i \circ \theta_i(x)[s]$ is defined.

We prove by induction on $t \geq s$ that
$$\phi_i \theta_i(x) [t]= \theta_{i-1}(\hat{x}) [t];$$
this property certainly holds for the  stage at which $B_i$ is first attended (see (e), (f)).
 
 According to the instructions in (e)
we have

$$\theta_i(x)[t] = \sum_j \dfrac{t!}{m} d_{i,j} + \theta_i(x)[t-1],$$
where the sum $\sum_j \dfrac{t!}{m} d_{i,j}$ can possibly be empty, i.e, there are no such $d_{i,j}$.
 Similarly for $i-1$ and $\hat x$:
 $$\theta_{i-1}(\hat{x})[t] = \sum_j \dfrac{t!}{m} d_{i-1,j} +  \theta_{i-1}(\hat{x})[t-1],$$
 where $d_{i-1,j} = \phi_i(d_{i,j})$ according to  (a.4). 
 Recall that, by our assumption,  $\phi_i \theta_i(x) [t-1]= \theta_{i-1}(\hat{x}) [t-1].$
 
 Combine the above equations:

$$\theta_{i-1}(\hat{x}) [t]= \phi_i (\sum_j \dfrac{t!}{m} d_{i-1,j}) + \phi_i (\theta_i(x)[t-1]) = $$
 
$$= \phi_i (\sum_j \dfrac{t!}{m} d_{i-1,j}+\theta_i(x)[t-1]) = \phi_i \theta_i (x)[t]$$
 to see that  $$\theta_{i-1}(\hat{x})[t] = \phi_i \theta_i(x)[t].$$
Take $s_0$ so large that $\theta_{i-1}(\hat{x})[s_0] = \theta_{i-1}(\hat{x}) =\phi_i \theta_i (x) [s_0]= \phi_i \theta_i (x) $. By Claim~\ref{claim:injective}, we 
have $\theta_{i-1}^{-1} \phi_i \theta_i (x) [s_0]=  \theta_{i-1}^{-1} \phi_i \theta_i (x) =\hat{x} = \psi_i(x)[t]$.
 In other words, once $\psi_i$ is defined it never changes, even though $\theta_i$ and $\theta_{i-1}$ will perhaps change.
  \end{proof}

It is easy to see that the group $G$ is isomorphic to the inverse limit of $(B_i, \psi_i)_{i \in \NN}$. This completes the proof of Proposition~\ref{uber-Dobritsa}.

%
%
%   \bibliographystyle{plain}
%    \bibliography{ourbib}
\def\cprime{$'$} \def\cprime{$'$} \def\cprime{$'$} \def\cprime{$'$}

\end{document}